 \theoremstyle{plain}
 \newtheorem{theo}{Theorem}[section]
 \newtheorem{lemma}[theo]{Lemma}
 \newtheorem{propo}[theo]{Proposition}
 \newtheorem{coro}[theo]{Corollary}
 \theoremstyle{definition}
 \newtheorem{defi}[theo]{Definition}
 \theoremstyle{remark}
 \newtheorem{rem}[theo]{Remark}
 \numberwithin{equation}{section}
\mathchardef\emptyset="001F
\newcommand{\ds}{\displaystyle}
\newcommand{\dx}{\,dx}
\newcommand{\ie}{{; \it i.e., }}
\newcommand{\Am}{\mathcal{A}}
\let\e= \varepsilon
\newcommand{\R}{{\mathbb R}}
\newcommand{\Z}{{\mathbb Z}}
\newcommand{\N}{{\mathbb N}}
\newcommand{\Sph}{{\mathbb S}}
\newcommand{\Om}{\Omega}
\def\avint{\mathop{\,\rlap{--}\hspace{-.15cm}\int}\nolimits}
\newcommand{\mres}{\mathbin{\vrule height 1.6ex depth 0pt width 0.13ex\vrule height 0.13ex depth 0pt width 1.3ex}}
\DeclareMathOperator\supp{supp}
\title[Homogenization of high-contrast 
Mumford-Shah energies]{Homogenization of high-contrast \\
Mumford-Shah energies}
\author[X. Pellet]{Xavier Pellet}
\author[L. Scardia]{Lucia Scardia}
\address[X. Pellet and L. Scardia]{Department of Mathematical Sciences, University of Bath, Bath, United Kingdom}
\email[]{X.P.J.Pellet@bath.ac.uk}
\email[]{L.Scardia@bath.ac.uk}
\author[C. I. Zeppieri]{Caterina Ida Zeppieri}
\address[C. I. Zeppieri]{Angewandte Mathematik, WWU M\"unster, Germany}
\email[]{caterina.zeppieri@uni-muenster.de}
\begin{document}

\begin{abstract}
We prove a homogenization result for Mumford-Shah-type energies associated to a brittle composite material with weak inclusions distributed periodically at a scale $\e>0$. The matrix and the inclusions in the material have the same elastic moduli but very different toughness moduli, with the ratio of the toughness modulus in the matrix and in the inclusions being $1/\beta_\e$, with $\beta_{\varepsilon}>0$ small. We show that the high-contrast behaviour of the composite leads to the emergence of interesting effects in the limit: The volume and surface energy densities interact by $\Gamma$-convergence, and the limit volume energy is not a quadratic form in the critical scaling $\beta_\e = \e$, unlike the $\e$-energies, and unlike the extremal limit cases.

%the high-contrast behaviour of shows emergence of non-standard constitutive laws in the homogenized  limit of high-contrast brittle materials.

%\BBB 	We prove a homogenisation result for Mumford-Shah-type energies associated to a brittle composite material with weak inclusions distributed periodically at a scale $\e>0$. The weak material is characterised by a vanishing toughness denoted by $\beta_\e$.  Depending on the limit $\ell\in [0,+\infty]$ of the ratio $\beta_\e/\e$ as $\e\to 0$, in the homogenized  limit we obtain three different functionals -- all of free-discontinuity type. Interestingly, 
%	the limit surface energy is the same for every $\ell$ and it is the lowest possible. It is instead the limit volume energy which depends on the ratio $\ell$, and ranges from the energy 
%	corresponding to a perforated domain, for $\ell=0$, to the density of an undamaged material, for $\ell=+\infty$. Moreover, for $\ell\in (0,+\infty)$ the volume energy is not a quadratic form, unlike 
%	the extremal cases.
%	\EEE
\end{abstract}

\maketitle
%\leqslant

{\small
\keywords{\textbf{Keywords:} Homogenization, $\Gamma$-convergence, free-discontinuity problems, high-contrast materials, brittle fracture.}

\medskip

\subjclass{\textbf{MSC 2010:} 
49J45, % Methods involving semicontinuity and convergence; relaxation
49Q20,  %Variational problems in a geometric measure-theoretic setting
74Q05.  %Homogenization in equilibrium problems
}
}

\bigskip

\section{Introduction}

We study the homogenization of a family of Mumford-Shah-type free-discontinuity functionals representing the (linearly) elastic energy of a \textit{high-contrast} composite material constituted by a brittle matrix with weak inclusions distributed periodically. 
%We assume that the matrix and the inclusions have the same elastic moduli (normalised to $1$) but very different toughness. 
Our analysis is restricted to the case of an anti-plane shear, 
namely to scalar displacements $u:\Omega \subset \mathbb{R}^{n} \rightarrow \mathbb{R}$, where $\Omega\subset \mathbb{R}^n$ is a bounded and open set with Lipschitz boundary representing the cross-section of the reference configuration $\Omega \times \mathbb{R}$. The energy we consider is
		\begin{equation}\label{Fe}
			{F}_{\varepsilon}(u) 
			= \int_{\Omega}|\nabla u|^{2} dx 
			+ \mathcal{H}^{n-1}(S_{u}\cap\varepsilon P)
			+ \beta_{\varepsilon}\mathcal{H}^{n-1}(S_{u}\cap(\Omega\setminus\varepsilon P)),
		\end{equation}
where $\varepsilon>0$ is the ratio between the size of the microstructure and the observable length scale, and $\varepsilon P$ is the $\varepsilon$-scaled copy of a connected, $Q$-periodic, open set $P\subset\mathbb{R}^{n}$, 
with $Q=(-1/2,1/2)^{n}$, which models the matrix of the composite. The displacement $u$ is in the class $SBV^2(\Omega)$ of special functions of bounded variation, $\nabla u$ denotes its approximate gradient and $S_{u}$ its discontinuity set. The volume term in ${F}_{\varepsilon}$ represents the linearly elastic energy of the body, and the surface integral describes the energy needed to open a crack in the material. Note that the matrix and the inclusions have the same elastic moduli (normalised to $1$), but very different toughness moduli: the toughness modulus is $1$ in the matrix and $\beta_{\varepsilon}>0$ in the inclusions, with $\beta_{\varepsilon}\rightarrow 0$ as $\varepsilon\to 0$. This is why we call the brittle composite high-contrast.
\smallskip

The literature on high-contrast materials and on the derivation of their effective behaviour by homogenization is vast. In the classical Sobolev case, it is well known that interesting effects appear in the limit when the volume energy density does not satisfy uniform lower bounds (see, e.g., \cite{BKS, CC12, CC15}, and \cite{BrCPP} for the case of discrete energies). It is then natural to try to extend this analysis to the case of free-discontinuity functionals, and there has been a recent effort in this direction. Note that for free-discontinuity functionals the high contrast can be in the volume part of the energy \cite{Bar18,BLZ}, in the surface part \cite{SLD1, SLD2}, or in both \cite{CS1, BaFo, FSP1, BrSo}.

\begin{figure}
\includegraphics[scale=0.9]{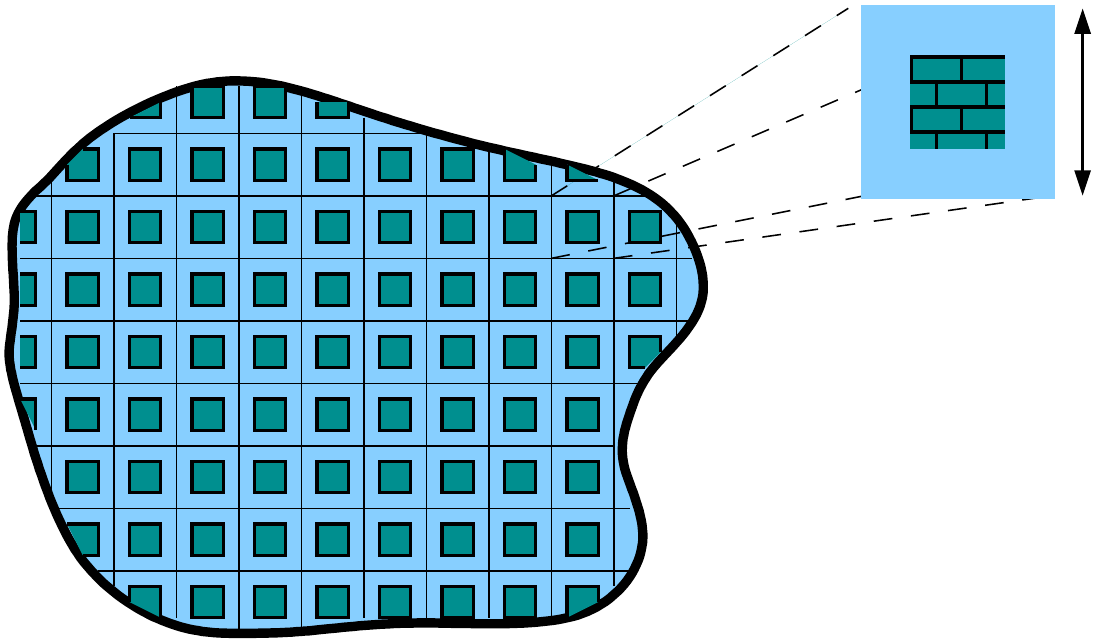}
\caption{Schematic of a periodic brittle material with weak inclusions}
\begin{picture}(0,0) 
\put(30,80){{$\Omega$}}
\put(145,175){{$\e$}}
\end{picture}
\end{figure}

For the functional \eqref{Fe} the high-contrast behaviour is in the surface term. We show that depending 
on how small $\beta_\e$ is, with respect to the `critical' value $\e$, the effective behaviour of the high-contrast material is different. To see this we introduce the parameter 
$$
\ell:=\lim_{\e \to 0}\frac{\beta_\e}{\e}\in [0,+\infty],
$$
and characterise the $\Gamma$-limit of $F_\e$ in the case $\ell=0$ (subcritical regime), $\ell\in (0,+\infty)$ (critical regime), and $\ell=+\infty$ (supercritical regime).  
	
\subsection{Abstract $\Gamma$-convergence result and the choice of the convergence.} As a first step we prove that there exists an infinitesimal sequence $(\e_k)$ along which ${F}_{\varepsilon_k}$ $\Gamma$-converges to a functional ${F}^{\ell, (\e_k)}_{\rm{hom}}$ (depending on the sequence $(\e_k)$) which can be represented in an integral form as		
\begin{equation*}
			{F}^{\ell, (\e_k)}_{\rm{hom}}(u)= 
			\begin{cases} 
			\displaystyle
			\int_{\Omega}f^{\ell, (\e_k)}_{\rm{hom}}(\nabla u) dx 
			+ \int_{S_{u}} g^{\ell, (\e_k)}_{\rm{hom}}([u], \nu_u) d\mathcal{H}^{n-1}
			& \mbox{if} \ u \in GSBV^{2}(\Omega), \\ 
			+\infty
			& \mbox{otherwise in } L^{1}(\Omega), 
			\end{cases}
		\end{equation*}
where $f^{\ell, (\e_k)}_{\rm{hom}}$ and $g^{\ell, (\e_k)}_{\rm{hom}}$ depend on the mutual vanishing rate of $\beta_\varepsilon$ and $\e$, that is on $\ell$, and $GSBV^{2}(\Omega)$ is the space of generalised special functions with bounded variation.

In this step the choice of the convergence plays a crucial role and introduces some difficulties. The functional $F_\e$ in \eqref{Fe} is non-coercive with respect to the $L^1(\Omega)$ topology, due to the infinitesimal prefactor 
$\beta_\e$ for the measure of the jump set in the inclusions $\Omega \setminus \e P$.
In \cite{CS1}, where the authors considered the Mumford-Shah functional on a perforated domain, namely
		\begin{equation}\label{widehatFe-intro}
			\widehat{F}_{\varepsilon}(u):= 
				\begin{cases} 
				\displaystyle
				\int_{\Omega\cap\varepsilon P}|\nabla u|^{2} dx + \mathcal{H}^{n-1}(S_{u}\cap\varepsilon P)
				& \mbox{if} \ u_{|\Omega \cap \varepsilon P} \in GSBV^{2}(\Omega \cap \varepsilon P), \\ 
				+\infty
				& \mbox{otherwise in } L^{1}(\Omega),
			\end{cases}
		\end{equation}
the (even more prominent) lack of coerciveness was solved by means of an extension result. Namely, each sequence $(u_\e)$ with equibounded energy $\widehat{F}_{\varepsilon}$ was replaced by a new, improved sequence $\tilde u_\e$ which coincides with $u_\e$ outside the perforations and which is precompact in $L^1(\Omega)$. Since $\widehat{F}_{\varepsilon}(u_\e) = \widehat{F}_{\varepsilon}(\tilde u_\e)$, this substitution does not affect the energy, and hence it is natural to study the $\Gamma$-convergence of $\widehat{F}_\e$ with respect to the strong convergence in $L^1(\Omega)$ (see also \cite{BaFo, FSP1, BrSo}). 

The situation in the case of the functional $F_\e$ in \eqref{Fe} is quite different, since ${F}_{\varepsilon}(u_\e) \neq {F}_{\varepsilon}(\tilde u_\e)$. It is therefore necessary to formulate a compactness result for the original sequence $(u_\e)$. The convergence we introduce only looks at the behaviour of $(u_\e)$ on the matrix $\Omega \cap \e P$: We say that $(u_\e)$ converges to $u$ in $\Omega \cap \e P$ if there exists a sequence $(\tilde u_\e) \subset L^1(\Omega)$ such that $\tilde u_\e = u_\e$ in $\Omega\cap \varepsilon P$, and $\tilde u_\e$ converges to $u$ strongly in $L^1(\Omega)$ (see Definition \ref{def:conv}).  

This new convergence is natural since it guarantees the convergence of minimisers and minimum values of $F_\e$, up to the addition of a forcing term (see Proposition \ref{propo:conv_min}). It introduces, however, several difficulties. First of all, we can only work with a \textit{sequential} notion of $\Gamma$-convergence for $F_\e$. As a result, the so-called localisation method of $\Gamma$-convergence that is usually employed to prove the existence of a $\Gamma$-limit of integral type does not apply directly. In particular, the proof of the fundamental estimate, which is a crucial step to guarantee that the $\Gamma$-limit is a measure, requires some care (see Lemma \ref{lemma:FE}). Moreover, dealing with a weaker convergence (than the usual $L^1(\Omega)$) implies that there are more converging sequences, and therefore proving an optimal lower bound for $f^{\ell,(\e_k)}_{\rm}$ and $g^{\ell,(\e_k)}_{\rm}$ is more subtle. 

\subsection{The surface energy} We show in Theorem \ref{t:g-hom-g-hat} that the homogenized  surface integrand $g^{\ell, (\e_k)}_{\rm{hom}}$ does not depend on $\ell$ or on the subsequence $(\e_k)$ and 
\begin{equation}\label{ghat-intro}
g_{\rm{hom}}^\ell(\nu) = \hat g(\nu)
\end{equation}
for every $\ell\in [0,+\infty]$ and every $\nu\in \Sph^{n-1}$, where $\hat g$ denotes the limit surface density of the Mumford-Shah functional in a periodically perforated domain, $\widehat{F}_{\varepsilon}$, in \eqref{widehatFe-intro}. 
This result is surprising since the functionals $F_\e$ in \eqref{Fe} exhibit a high-contrast, $\beta_\e$-dependent behaviour in the surface term, while the limit surface energy density is independent of $\ell$ and 
coincides with the case of $\beta_\e=0$. In other words, the effective cost of introducing a crack in the material is the lowest possible, and coincides with the case where the inclusions 
are replaced by perforations. 

Note that, since $F_\e \geqslant \widehat F_\e$, the bound $g_{\rm{hom}}^\ell \geqslant \hat g$ is immediate. However, proving the opposite inequality is nontrivial and it requires extending
partitions of a perforated domain inside the perforations, without essentially increasing the perimeter of the partition.

\subsection{The volume energy}	
The homogenized  volume integrand $f^{\ell, (\e_k)}_{\rm{hom}}$, instead, shows a nontrivial dependence on $\ell$. The dependence on the subsequence $(\e_k)$, however, is only present in the critical regime $\ell\in (0,+\infty)$ and not in the extremal cases $\ell=0$ and $\ell=+\infty$.

Specifically, in the subcritical regime $\ell=0$,  for every $\xi\in\mathbb{R}^{n}$
		\begin{equation}\label{f0-intro}
			f^0_{\rm{hom}}(\xi)= \hat f(\xi),
	\end{equation}
where $\hat f$ denotes the limit volume density of $\widehat F_\e$ (Theorem \ref{t:f-hom-f-hat}). So, in the subcritical regime $\beta_\e\ll\e$, $F_\e$ and $\widehat{F}_{\varepsilon}$ are asymptotically equivalent, since `cutting out' all the inclusions has an infinitesimal energy-cost of order $\beta_\e/\e$ (given by the perimeter of the inclusion, which is proportional to $\beta_\e \e^{n-1}$, for each of the $1/\e^n$ $\e$-cells in $\Om$). In other words, having very weak inclusions is equivalent to having a perforated material.

In the other extremal regime\ie $\ell=+\infty$, the homogenized  volume integrand is the highest possible, namely $f^\infty_{\rm{hom}}(\xi) = |\xi|^2$ (Theorem \ref{id:f_infty}). In this case it is the upper bound 
$f^\infty_{\rm{hom}}(\xi) \leqslant |\xi|^2$ that is immediate; the difficulty in proving the opposite inequality is due to having to prove that every sequence $u_\e$ converging to $u_\xi=\xi \cdot x$ in $\Omega \cap \e P$ satisfies the lower bound
\begin{equation}\label{i:xi2}
\liminf_{\e\to 0} F_\e(u_\e) \geq \mathcal{L}^n(\Om) |\xi|^2.
\end{equation}
Note that \eqref{i:xi2} means that the microscopic cracks at scale $\varepsilon$ do not lower the elastic moduli of  the material. We prove the lower bound \eqref{i:xi2} by classifying each $\e$-cell in $\Omega$. Either the measure of the jump set of $u_\e$ in the cell is large, in which case we `cut out' the inclusion from the cell as in the subcritical case. Or, alternatively, the measure of the jump set of $u_\e$ in the cell is small, in which case the function is essentially smooth, thanks to an `Elimination Property' for the jump set due to De Giorgi, Carriero and Leaci \cite{DeGiorgi} (see also \cite{DMMS92}). Due to the relatively high cost of creating a fracture in the supercritical case, we show that it is energetically convenient to have no fracture at all in the majority of the cubes, and this gives \eqref{i:xi2}. 

In the critical case $\ell\in (0,+\infty)$, we cannot exclude that the volume energy density $f_{\rm{hom}}^{\ell,(\varepsilon_k)}$ depends on the subsequence $(\e_k)$ along which we prove $\Gamma$-convergence. Each (subsequence dependent) $f_{\rm{hom}}^{\ell,(\varepsilon_k)}$, however, satisfies some (subsequence independent) properties. First of all, 
$f_{\rm{hom}}^{\ell,(\varepsilon_k)} = |\xi|^2$ for small $|\xi|$; namely, for small $\xi$ the effective elastic behaviour of the material is the same as in the supercritical case. 
Moreover, unlike the extremal cases $\ell\in\{0,+\infty\}$, the volume energy density \textit{is not $2$-homogeneous} even if the volume density of $F_\varepsilon$ is. 
This shows emergence of non-standard constitutive laws in the homogenized  limit of high-contrast brittle materials.

\subsection{Comparison with previous work} This work has interesting similarities and differences with previous results on the homogenization of free-discontinuity energies. 

\subsubsection{Interaction of the volume and surface term in the $\Gamma$-limit} Our result shows that the volume and surface terms of $F_\e$ interact in the limit. This case is different from the analysis in \cite{CDMSZ}, where the authors devise a list of assumptions ensuring that the volume and surface energies do not interact in the homogenization of free-discontinuity functionals.  
The functionals $F_\e$ in \eqref{Fe} are not covered by the analysis in \cite{CDMSZ} due to their degenerate growth conditions. Such a degeneracy, and the consequent lack of coerciveness, 
however, is not sufficient to cause the interaction of the terms of the energy, since this was not the case for the functionals $\widehat{F}_\e$.

\subsubsection{Other high-contrast Mumford-Shah energies} The general form of high-contrast Mumford-Shah energies is 
\begin{equation*}
			{H}^{\alpha_\e,\beta_\e}_{\varepsilon}(u) 
			= \int_{\Omega\cap\varepsilon P}|\nabla u|^{2} dx + \alpha_\e\int_{\Omega\cap(\Omega\setminus\varepsilon P)}|\nabla u|^{2} dx 
			+ \mathcal{H}^{n-1}(S_{u}\cap\varepsilon P)
			+ \beta_{\varepsilon}\mathcal{H}^{n-1}(S_{u}\cap(\Omega\setminus\varepsilon P)),
\end{equation*}
where $\alpha_\e, \beta_\e\geq 0$, with either $\alpha_\e$ or $\beta_\e$ being infinitesimal for $\e\to 0$. Note that $\widehat{F}_\e = {H}^{0,0}_{\e}$, $F_\e = {H}^{1,\beta_\e}$, while the case studied in \cite{BLZ} corresponds 
to ${H}^{\alpha_\e,1}_{\varepsilon}$. In \cite{BLZ} the authors proved that the volume energy density of the $\Gamma$-limit of ${H}^{\alpha_\e,1}_{\varepsilon}$ is $\hat f$, regardless of the smallness of $\alpha_\e$ relative to $\e$. Combining this result with the identification of the $\Gamma$-limit of $F_\e = {H}^{1,\beta_\e}$ proved in the present paper, and of the $\Gamma$-limit of $\widehat F_\e = {H}^{0,0}$ in \cite{CS1, BaFo, FSP1}, we can deduce the expression of the $\Gamma$-limit of $H^{\alpha_\e,\beta_\e}_\e$, when both $\alpha_\e\to 0$ and $\beta_\e\to 0$. This follows by letting $\e\to 0$ in the estimate 
\begin{equation}\label{est:Hab}
{H}^{0,0}_{\e}(u) \leqslant H^{\alpha_\e,\beta_\e}_\e(u) \leqslant \min\{{H}_\e^{1,\beta_\e}(u), {H}^{\alpha_\e,1}_{\varepsilon}(u)\}.
\end{equation}
Indeed, the limit volume energy density in the left- and right-hand sides of the inequality \eqref{est:Hab} is $\hat f$, and the limit surface energy density in both sides of the inequality is $\hat g$, so that the $\Gamma$-limit of $H^{\alpha_\e,\beta_\e}_\e$ is the same as that of ${H}^{0,0}_{\e}$\ie the same as the $\Gamma$-limit of the perforated Mumford-Shah functional $\widehat F_\e$ (see also \cite{BrSo}).

\subsection{Structure of the paper} In Section \ref{Sec:prel} we introduce the notation used in the paper and we recall some previous results on the homogenization of Mumford-Shah-type energies. 
Section \ref{Sect:setting} is devoted to the statement of our main $\Gamma$-convergence result, Theorem \ref{T1}, and of its consequences. The proof of Theorem \ref{T1} is split into the remaining three
sections: The abstract $\Gamma$-convergence and integral representation result is proved in Section \ref{sect:abstract}, while Sections \ref{sect:g} and \ref{sect:f} are devoted to the characterisation 
of the surface and volume energy densities, respectively.

\section{Notation and preliminaries}\label{Sec:prel}

In this section we fix the notation and recall some definitions and results that we are going to use throughout the paper. 

\subsection{Notation} Let $n\geqslant 2$ and let $\Omega\subset \R^n$ be open bounded and with Lipschitz boundary. 
We denote with $\mathcal{A}(\Omega)$ the class of all open subsets of $\Omega$, and with $\mathcal{B}(\Omega)$ the $\sigma$-algebra of Borel sets in $\Omega$.
The $n$-dimensional Lebesgue measure is denoted by $\mathcal{L}^{n}$, and the $(n-1)$-dimensional Hausdorff measure by $\mathcal{H}^{n-1}$. For $U$, $V\in\mathcal{B}(\Omega)$ we set 
$\mathcal{H}^{n-1}\mres U(V):= \mathcal{H}^{n-1}(U \cap V)$.
For every $x \in \mathbb{R}^{n}$ and $r>0$, $B_r(x)$ will be the open ball with centre $x$ and radius $r$, with $B_r:=B_r(0)$; for $0<s<r$ we also set  
$B_{s,r}:= B_r \setminus \overline B_s$. The boundary of the ball $B_1$ will be denoted with  $\Sph^{n-1}$.

For $r>0$, $Q_r$ denotes the open cube centred at the origin, with side-length $r$. We write $Q=Q_1$. For $0<s<r$ we also set  
$Q_{s,r}:= Q_r \setminus \overline Q_s$. 
	
We also define the periodic set
\begin{equation}\label{def:P}
P:=\mathbb{R}^{n}\setminus \left(\bigcup_{i \in \mathbb{Z}^{n}} (i+Q_{1/2})\right).
\end{equation}

\medskip

The functional setting for our analysis is that of \textit{special functions of bounded variation} in 
$\Omega$\ie	
\begin{equation*}
SBV(\Omega):= \{u \in BV(\Omega): \ Du = \nabla u \mathcal{L}^{n} + (u^{+} - u^{-})\nu_{u}d\mathcal{H}^{n-1} \mres S_{u}\}.
\end{equation*}
Here $S_{u}$ denotes the discontinuity set of $u$, $\nu_{u}$ is the generalised normal to $S_{u}$, $u^{+}$ and $u^{-}$ are the traces of $u$ on both sides of $S_{u}$. 
More precisely, we work with the following vector subspace of $SBV(\Omega)$:
\begin{equation*}
SBV^{2}(\Omega):= \{u \in SBV(\Omega): \ \nabla u\in L^{2}(\Omega) \text{ and } \mathcal{H}^{n-1}(S_{u})<+\infty\}.
\end{equation*}
We consider also the larger space of \textit{generalised special functions of bounded variations} in $\Omega$, 
\begin{equation*}
GSBV(\Omega):= \{u \in L^{1}(\Omega): \ (u \wedge m) \vee (-m) \in SBV(\Omega) \text{ for all } m\in \N\}.
\end{equation*}
By analogy with the case of $SBV$ functions, we write
\begin{equation*}
GSBV^{2}(\Omega):= \{u \in GSBV(\Omega): \ \nabla u\in L^{2}(\Omega) \text{ and } \mathcal{H}^{n-1}(S_{u})<+\infty\}.
\end{equation*}
We also consider the space
$$
SBV^{\mathrm{pc}}(\Om):=\{u\in SBV(\Om): \nabla u=0 \,\, \mathcal{L}^{n}\textrm{-a.e.}, \,\mathcal{H}^{n-1}(S_u)<+\infty\};
$$
it is known (see \cite[Theorem 4.23]{AFP}) that every $u$ in $SBV^{\mathrm{pc}}(\Om)\cap L^\infty(\Om)$ is piecewise constant in the sense of \cite[Definition 4.21]{AFP}, namely 
there exists a Caccioppoli partition $(E_i)$ of $\Om$ such that $u$ is constant $\mathcal{L}^n$-a.e.\ in each set $E_i$. 

Moreover, we set
$$
\mathcal{P}(\Omega):=\{u \in SBV^{\textrm{pc}}(\Omega)\colon u(x)\in \{0,1\} \ \mathcal{L}^n\textrm{- a.e. in } \Om\}.
$$

\subsection{Mumford-Shah-type energies} For $\varepsilon>0$ let $0 \leqslant \alpha_{\varepsilon}, \beta_{\varepsilon} \leqslant 1$; we define the functional 
$H_{\varepsilon}: L^1(\Omega)\to [0,+\infty]$ as follows
\begin{equation}\label{MSab}
H_{\varepsilon}(u) 
:= \begin{cases}
\displaystyle\int_{\Omega}a_\e\Big(\frac{x}{\e}\Big)|\nabla u|^{2} dx + \int_{S_u\cap \Omega} b_\e\Big(\frac{x}{\e}\Big) d\mathcal{H}^{n-1}(x)
& \quad  \text{if}\; u\in GSBV^2(\Omega) \\
+\infty & \quad \text{otherwise},
\end{cases}
\end{equation}
where $a_\e, b_\e: \R^n \to \R$ are the $Q$-periodic functions defined as
$$
a_\e(y):= \begin{cases} 1 \quad &\textrm{if } y \in Q_{\frac{1}{2},1}\\ 
\alpha_\e \quad &\textrm{if } y \in Q_{\frac{1}{2}}
\end{cases},
\qquad 
b_\e(y):= \begin{cases} 1 \quad &\textrm{if } y \in Q_{\frac{1}{2},1}\\ 
\beta_\e \quad &\textrm{if } y \in Q_{\frac{1}{2}}
\end{cases},
$$
on the periodicity cell $Q$. The particular choice of the geometry of the inclusions (or of the set $P$) is not relevant for the subsequent analysis: Instead of $Q_{1/2}$ we could consider any Lipschitz open subset $U$ of $Q$, with $U\subset\subset Q$.
In what follows however we only consider $P$ as in \eqref{def:P} for the sake of the exposition.

\subsubsection{The extreme cases $\alpha_\e=\beta_\e=1$ and $\alpha_\e=\beta_\e=0$.} The case $\alpha_\e=\beta_\e=1$ corresponds to the ($\e$-independent) Mumford-Shah functional, which we denote with
		\begin{equation}
			MS(u)
			:=\begin{cases}
			\displaystyle
			\int_{\Omega}|\nabla u|^{2} dx + \mathcal{H}^{n-1}(S_{u}\cap\Omega) 
			& \text{if } u\in GSBV^{2}(\Omega),\\
			+\infty 
			& \text{otherwise in } L^{1}(\Omega).
			\end{cases}
			\label{MS1}
		\end{equation}	
The other extreme case $\alpha_\varepsilon = \beta_\varepsilon =0$ corresponds to the Mumford-Shah functional on a periodically perforated domain, which we denote with
		\begin{equation}\label{widehatFe}
			\widehat{F}_{\varepsilon}(u) := 
				\begin{cases} 
				\displaystyle
				\int_{\Omega\cap\varepsilon P}|\nabla u|^{2} dx + \mathcal{H}^{n-1}(S_{u}\cap\varepsilon P)
				& \mbox{if} \ u_{|\Omega \cap \varepsilon P} \in GSBV^{2}(\Omega \cap \varepsilon P), \\ 
				+\infty
				& \mbox{otherwise in } L^{1}(\Omega). 
			\end{cases}
		\end{equation}
We recall that the $\Gamma$-limit of $\widehat{F}_{\varepsilon}$ with respect to the $L^1(\Omega)$-topology, which has been studied in \cite{FSP1, CS1, BaFo, BrSo}, is the anisotropic free-discontinuity functional $\widehat{F} : L^{1}(\Omega) \rightarrow [0,+\infty]$  defined as
		\begin{equation}\label{def:Fhat}
			\widehat{F}(u) := 
				\begin{cases} 
				\displaystyle
				\int_{\Omega}\hat{f}(\nabla u)\dx + \int_{S_{u}\cap\Omega} \hat{g}(\nu_u)\, d\mathcal{H}^{n-1}
				& \mbox{if} \ u \in GSBV^{2}(\Omega), \\ 
				+\infty
				& \mbox{otherwise in } L^{1}(\Omega),
			\end{cases}
		\end{equation}
(see, e.g., \cite[Theorem 4]{BaFo}). In \eqref{def:Fhat}, the volume density $\hat f \colon {\R}^{n} \to [0,+\infty)$ is the quadratic form given by  
		\begin{equation}
			\hat{f}(\xi) := \inf\left\{\int_{Q \cap P}|\xi + D w|^{2}  dx: \ w \in H_{\textrm{per}}^{1}(Q \cap P)\right\},
			\label{F}
		\end{equation}
moreover, there exists a constant $c_{1}:=c_1(n,P)>0$ such that
\begin{equation} 
c_{1}|\xi|^{2}\leqslant\hat{f}(\xi)\leqslant|\xi|^{2} \ \text{ for every } \xi\in\mathbb{R}^{n}.
\label{B1}
\end{equation} 
The surface density $\hat g\colon  \Sph^{n-1} \to [0,+\infty)$ is defined as
		\begin{multline}\label{G}
			\hat{g}(\nu) := \lim_{\varepsilon \rightarrow 0^{+}} \inf
			\Big\{ \mathcal{H}^{n-1}(S_{w} \cap (Q^\nu \cap \varepsilon P)) : \ 
			w\in \mathcal P(Q^{\nu} \cap \e P), \\ 
			w = u_{0,1}^\nu \text{ in a neighbourhood of } \partial Q^{\nu}\Big\},
		\end{multline}
	where $Q^{\nu}$ stands for the unit cube centred in $0$ with one face orthogonal to $\nu$ and
\begin{equation*}
			u_{0,1}^{\nu}(x) = \begin{cases} 
			1 & \mbox{if} \ \langle x,\nu\rangle \geqslant 0 \\ 
			0 & \mbox{if} \ \langle x,\nu\rangle < 0.
			\end{cases}
		\end{equation*}
It can be seen that the function $\hat g$ in \eqref{G} is continuous on $\Sph^{n-1}$ and satisfies 
\begin{equation}
c_{2}\leqslant\hat{g}(\nu)\leqslant 1 \ \text{ for every } \nu\in \Sph^{n-1},
\label{B2}
\end{equation}
for some constant $c_{2}:=c_2(n,P)>0$.
Therefore, gathering \eqref{B1} and \eqref{B2} we obtain the following lower bound for $\widehat F$  
		\begin{equation}
			\min\{c_{1},c_{2}\}MS \leqslant \widehat{F}.
			\label{BB2}
		\end{equation}

\begin{rem}	
By changing variables, we can equivalently write
		\begin{multline*}
			\hat{g}(\nu) = \lim_{t \rightarrow + \infty} \dfrac{1}{t^{n-1}}
			\min \{ 
			 \mathcal{H}^{n-1}(S_{w} \cap P \cap tQ^{\nu}) \ : \
			u \in \mathcal P(tQ^{\nu} \cap P), \\ 
			 w = u_{0,1}^\nu \text{ in a neighbourhood of } \partial (tQ^{\nu})
			\}.
		\end{multline*}
\end{rem}

\begin{rem}\label{rem:convBS}
In \cite{BrSo} Braides and Solci proved, among other things, that $\widehat F_\e$ $\Gamma$-converges to $\widehat F$ also with respect to the following convergence, that is weaker than convergence in $L^1(\Om)$: Given $u_\e, u \in L^1(\Om)$ we say that $u_\e$ converges to $u$ if 
$$
u_\e \chi_{\Om\cap \e P} \rightharpoonup C_P u \quad \textrm{weakly in } L^1(\Om),
$$
where $C_P := \mathcal{L}^n(Q\cap P)= \mathcal{L}^n(Q_{\frac{1}{2},1})=1-\big(\frac{1}{2}\big)^n$.
\end{rem}

\subsubsection{High-contrast Mumford-Shah functionals} 	

This is the case of the functional $H_\e$ in \eqref{MSab} where either $\alpha_\e \to 0$ or $\beta_\e\to 0$, as $\e \to 0$. For this choice of $\alpha_\e$ and $\beta_\e$, $H_\e$ represents the energy associated to a high-contrast composite material, the latter being characterised by two constituents (the matrix and the inclusions) with significantly different mechanical properties. 

\smallskip

The case where both $\alpha_{\varepsilon} \to 0$ and $\beta_{\varepsilon}\to 0$, as $\e\to 0$ was considered by Braides and Solci in \cite{BrSo}. In this case, independently of the vanishing rate of $\alpha_\varepsilon$ and $\beta_\varepsilon$, the functionals $H_{\varepsilon}$
$\Gamma$-converge to $\widehat F$ with respect to the convergence introduced in Remark \ref{rem:convBS}. This means that, no matter how small the weights $\alpha_\e$ and $\beta_\e$ are, as long as they are \textit{both} infinitesimal, the effective behaviour of the functional $H_\e$ is the same as for $\widehat{F}_\e$ in \eqref{widehatFe}, namely it is the same as for $\alpha_\e=\beta_\e=0$.

\smallskip

The case $\alpha_{\varepsilon} \to 0$ and $\beta_{\varepsilon}=1$ was treated in \cite{BLZ}, where the authors focus on the critical case $\alpha_\varepsilon=\varepsilon$ and study the $\Gamma$-convergence of $H_{\varepsilon}$ with respect to the strong convergence in $L^{1}(\Omega)$. They prove that the $\Gamma$-limit of $H_\e$, which exists up to a subsequence, is of free-discontinuity type. More precisely, the limit volume density is the function $\hat{f}$ in \eqref{F} corresponding to the case $\alpha_\e=0$. The surface energy density, instead, depends non trivially on the jump opening $[u]$, even if the surface energy density in $H_\e$ is identically equal to $1$. In particular, the case considered in \cite{BLZ} is an example where volume and surface energies interact in the $\Gamma$-limit thus giving rise to a homogenized  effective energy of completely different nature with respect to the microscopic ones. 
			
\smallskip
			
Here we consider the complementary case to \cite{BLZ}, namely in $H_\e$ we choose $\alpha_{\varepsilon} = 1$ and let $\beta_{\varepsilon} \to 0$ as $\e\to 0$. This choice corresponds to having inclusions which are much more brittle than the matrix, but whose elastic behaviour is the same as that of the matrix.

%More precisely, we deal with the functional 
%		\begin{equation}
%			{F}_{\varepsilon}(u) := 
%			\begin{cases} 
%			\displaystyle
%			\int_{\Omega}|\nabla u|^{2} \dx
%			+ \mathcal{H}^{n-1}(S_u\cap(\Omega\cap\varepsilon P))
%			+ \beta_{\varepsilon}\mathcal{H}^{n-1}(S_u\cap(\Omega \setminus \varepsilon P)) 
%			& \mbox{if} \ u \in GSBV^{2}(\Omega), \\ 
%			+\infty 
%			& \mbox{otherwise in } L^{1}(\Omega).
%			\end{cases}
%			\label{GG1}
%		\end{equation}

%\begin{rem}[Domain of the $\Gamma$-limit]
%By combing \eqref{BB1} and \eqref{BB2} we immediately deduce that the domain of the $\Gamma$-limit of $F_\e$ (if it exists) 
%is the space $GSBV^2(\Om)$. {\color{red} I think this is too soon; we should first speak about the convergence!}
%\end{rem}

%%%%%%%%%%%%%%%%%%%%

%{GG1}
%		\end{equation}
%Note that 	
%		\begin{equation}
%			\widehat{F}_{\varepsilon} \leqslant F_{\varepsilon} \leqslant MS. 
%			\label{BB1}
%		\end{equation}

%\begin{rem}[Domain of the $\Gamma$-limit]
%By combing \eqref{BB1} and \eqref{BB2} we immediately deduce that the domain of the $\Gamma$-limit of $F_\e$ (if it exists) 
%is the space $GSBV^2(\Om)$. {\color{red} I think this is too soon; we should first speak about the convergence!}
%\end{rem}

%%%%%%%%%%%%%%%%%%%%

\section{Setting of the problem and statement of the main result} \label{Sect:setting}
In this section we state our main result, that is a $\Gamma$-convergence theorem for the functionals $F_\e \colon L^1(\Om) \longrightarrow [0,+\infty]$ defined as 
\begin{equation}
			{F}_{\varepsilon}(u) := 
			\begin{cases} 
			\displaystyle
			\int_{\Omega}|\nabla u|^{2} \dx
			+ \mathcal{H}^{n-1}(S_u\cap(\Omega\cap\varepsilon P))
			+ \beta_{\varepsilon}\mathcal{H}^{n-1}(S_u\cap(\Omega \setminus \varepsilon P)) 
			& \mbox{if} \ u \in GSBV^{2}(\Omega), \\ 
			+\infty 
			& \mbox{otherwise in } L^{1}(\Omega),
			\end{cases}
			\label{GG1}
		\end{equation}
where $\beta_\e \searrow 0$ as $\e$ tends to zero. We analyse the asymptotic behaviour of $F_\e$ in three possible scaling-regimes\ie $\beta_\e \ll \e$ (subcritical regime), $\beta_\e \sim \e$ (critical regime), and $\beta_\e \gg \e$ (supercritical regime). We note that trivially	
		\begin{equation}
			\widehat{F}_{\varepsilon} \leqslant F_{\varepsilon} \leqslant MS,
			\label{BB1}
		\end{equation}
where $\widehat{F}_{\varepsilon}$ and $MS$ are defined in \eqref{widehatFe} and \eqref{MS1}, respectively.

\medskip

For the $\Gamma$-convergence result to be meaningful we first need to identify a notion of convergence on the space $L^1(\Om)$ for which the equicoerciveness of the functionals $F_\e$ is guaranteed. 

\subsection{Choice of the convergence} 
Let $(u_{\varepsilon})\subset L^1(\Omega)$ be a sequence satisfying 
\begin{equation}\label{bounds-conv}
\sup_\e \|u_\e\|_{L^\infty(\Om\cap \e P)}<+\infty \quad \text{and} \quad \sup_{\e}F_\e(u_\e)<+\infty.
\end{equation}
Since $\beta_\e$ is a vanishing sequence, the uniform bound on $F_\e(u_\e)$ gives no control on the surface term $\mathcal{H}^{n-1}(S_{u_\e}\cap(\Omega \setminus \varepsilon P))$ so that, in particular, the sequence $(u_\e)$ is not uniformly bounded in $BV(\Om)$. 
However, by the extension result \cite[Theorem 1.1]{CS1} we can find a new sequence $\tilde u_{\e} \subset L^1(\Om)$ satisfying
\begin{equation}\label{extension-Lu}
\begin{cases}
\vspace{.2cm}
\tilde u_{\e} = u_\e \quad \text{in}\; \Omega \cap \varepsilon P\\
\vspace{.2cm}
\sup_\e \|\tilde u_\e\|_{L^\infty(\Om)}<+\infty\\
\ds MS(\tilde u_\e) \leqslant C F_\e(u_\e),
\end{cases}
\end{equation}
for some $C>0$. Therefore, by combining \eqref{bounds-conv}-\eqref{extension-Lu} with the Ambrosio Compactness Theorem (see \textit{e.g.}, \cite[Theorem 4.8]{AFP}) we deduce that there exist $u \in SBV^2(\Omega)$ and a subsequence of $(\tilde u_\e)$, (not relabelled) such that $\tilde u_\e \to u$ in $L^1(\Om)$. 
As a consequence we get
\begin{align*}%\label{choice-of-conv}
\int_{\Om \cap \e P}|u_\e-u|\dx = \int_{\Om \cap \e P}|\tilde u_\e-u|\dx \leqslant \int_{\Om}|\tilde u_\e-u| \dx \to 0 \quad \textrm{as } \e\to 0.
\end{align*}
The above observation motivates the following definition. 

\begin{defi}[Convergence]\label{def:conv} 
Let $(u_\e)$ be a sequence in $L^1(\Om)$. We say that $(u_\e)$ converges in $\Om\cap \e P$ to a function $u\in L^1(\Om)$, and we write $u_\e \to u$,  
if there exists a sequence $(\tilde u_\e) \subset L^1(\Omega)$ such that $\tilde u_\e = u_\e$ in $\Omega \cap \e P$, and $\tilde u_\e$ converges to $u$ strongly in $L^1(\Omega)$.
\end{defi}

\begin{rem}
{\rm We observe that Definition \ref{def:conv} is well-posed. Indeed let $(\tilde u_{1,\e}), (\tilde u_{2,\e})\subset L^1(\Omega)$
be such that $\tilde u_{1,\e} = \tilde u_{2,\e} = u_\e$ in $\Om \cap \e P$ and $\tilde u_{1,\e} \to u_1$ and $\tilde u_{2,\e} \to u_2$ strongly in $L^1(\Omega)$. Then
$$
0= \lim_{\e \to 0}\int_{\Om \cap \e P}|\tilde u_{1,\e} - \tilde u_{2,\e}| \dx = \lim_{\e \to 0}\int_{\Om}|\tilde u_{1,\e} - \tilde u_{2,\e}|\chi_{\Om \cap \e P} \dx = C_P \int_{\Om}|u_1 - u_2| \dx, 
$$
hence, since $C_P = \mathcal{L}^n(Q_{\frac{1}{2},1})>0$ we necessarily have $u_1=u_2$.
}
\end{rem}

\begin{rem}\label{conv-deb-troncata}
{\rm Let $(u_\e) \subset L^1(\Om)$ be such that $u_\e \to u$ in the sense of Definition \ref{def:conv}, for some $u\in L^1(\Om)$. It is immediate to see that
\begin{itemize}
\item[i.] $\lim_\e \|u_\e -u\|_{L^1(\Om\cap \e P)}=0$; 
\item [ii.] the sequence $(u_\e \chi_{\Om \cap \e P})\subset L^1(\Om)$ converges to $C_P u$ weakly in $L^1(\Om)$;
\item[iii.] if $(u_\e^m)$ denotes the sequence of truncated functions of $u_\e$ at level $m\in \N$, then $u_\e^m \to u^m$ in the sense of Definition \ref{def:conv}, where $u^m$ denotes the truncated function of $u$ at level $m$.
\end{itemize}
}
\end{rem}

In view of the above considerations, in what follows we study the $\Gamma$-convergence of the functionals $F_\e$ with respect to the convergence as in Definition \ref{def:conv}. To this end we give the following definition.  

\begin{defi}[Sequential $\Gamma$-convergence]\label{def:G-conv}
Let $F_\e, F \colon L^1(\Om) \longrightarrow [0,+\infty]$; we say that the functionals $F_\e$ $\Gamma$-converge to $F$ with respect to the convergence as in Definition \ref{def:conv} if for every $u\in L^1(\Om)$ the two following conditions are satisfied:

\smallskip

$(i)$ (Ansatz-free lower bound) For every $(u_\e)\subset L^1(\Om)$ with $u_\e \to u$ in the sense of Definition \ref{def:conv} we have
$$
F(u) \leqslant \liminf_{\e \to 0} F_\e(u_\e);
$$

$(ii)$ (Existence of a recovery sequence) There exists $(\bar u_\e) \subset L^1(\Om)$ with $\bar u_\e \to u$ in the sense of Definition \ref{def:conv} such that
$$
F(u) \geqslant \limsup_{\e \to 0} F_\e(u_\e).
$$
\end{defi}

\begin{rem}\label{rem:Fps_min}
{\rm It is easy to check that $F$ is lower semicontinuous with respect to the convergence as in Definition \ref{def:conv}
and thus, in particular, with respect to the strong $L^1(\Om)$ convergence.}
\end{rem}

For every $u\in L^1(\Om)$ we set
\begin{equation}\label{def:G-li}
\Gamma\hbox{-}\liminf_{\e\to 0} F_{\e}(u):=\inf\Big\{\liminf_{\e\to 0}F_\e(u_\e) \colon u_\e \to u \Big\}
\end{equation}
and 
\begin{equation}\label{def:G-ls}
\Gamma\hbox{-}\limsup_{\e\to 0} F_{\e}(u):=\inf\Big\{\limsup_{\e\to 0}F_\e(u_\e) \colon u_\e \to u \Big\},
\end{equation}
where the convergence $u_\e \to u$ is understood in the sense of Definition \ref{def:conv}. We also introduce the more compact notation 
$F'(u):=\Gamma\hbox{-}\liminf_{\e} F_{\e}(u)$ and $F''(u):=\Gamma\hbox{-}\limsup_{\e} F_{\e}(u)$. Then, it is immediate to see that 
Definition \ref{def:G-conv} is equivalent to $F'=F''=F$ in $L^1(\Om)$. Further, it can be easily shown that the infima in \eqref{def:G-li} and \eqref{def:G-ls} are actually minima. 

\subsection{Compactness and domain of the $\Gamma$-limit} 

If not otherwise specified, in all that follows the $\Gamma$-convergence will be always understood in the sense of Definition \ref{def:G-conv}.

\medskip

The following proposition shows that the domain of the $\Gamma$-limit of $F_\e$ (if it exists) is the space $GSBV^2(\Om)$.

\begin{propo}[Domain of the $\Gamma$-limit]%\label{prop:dom}
Let $(u_{\varepsilon})\subset L^1(\Omega)$ be such that $\sup_{\e}F_\e(u_\e)<+\infty$. Assume moreover that $u_\e$ converges to $u$ in the sense of Definition \ref{def:conv}, then $u\in GSBV^2(\Om)$.
\end{propo}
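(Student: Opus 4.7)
The plan is to reduce to the $SBV^2$ case by truncation, apply the extension result from \cite{CS1} (as recalled in \eqref{extension-Lu}), and then identify the limit via the convergence in Definition \ref{def:conv}. Concretely, for each $m\in\N$ I would consider the truncations $u_\e^m:=(u_\e\wedge m)\vee(-m)$. Since truncation does not increase either the gradient energy or the $\mathcal{H}^{n-1}$-measure of the jump set, one has $F_\e(u_\e^m)\leqslant F_\e(u_\e)\leqslant C$, and moreover $\|u_\e^m\|_{L^\infty(\Omega)}\leqslant m$. This $L^\infty$-bound is exactly what is needed to invoke the extension theorem \cite[Theorem 1.1]{CS1}, producing $\tilde u_\e^m\in SBV^2(\Omega)\cap L^\infty(\Omega)$ with $\tilde u_\e^m=u_\e^m$ on $\Omega\cap \e P$, with $\sup_\e\|\tilde u_\e^m\|_{L^\infty(\Omega)}<+\infty$, and with
\[
MS(\tilde u_\e^m)\leqslant C\, F_\e(u_\e^m)\leqslant C\, F_\e(u_\e).
\]

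Next, by Ambrosio's compactness theorem (e.g.\ \cite[Theorem 4.8]{AFP}), for each fixed $m$ there is $v^m\in SBV^2(\Omega)$ and a subsequence (depending on $m$, extracted by a diagonal argument over $m\in\N$) such that $\tilde u_\e^m\to v^m$ strongly in $L^1(\Omega)$, with $\nabla\tilde u_\e^m\rightharpoonup \nabla v^m$ weakly in $L^2(\Omega;\R^n)$ and $\mathcal{H}^{n-1}(S_{v^m})\leqslant \liminf_\e\mathcal{H}^{n-1}(S_{\tilde u_\e^m})\leqslant C$. On the other hand, by Remark \ref{conv-deb-troncata}(iii), $u_\e^m\to u^m$ in the sense of Definition \ref{def:conv}, so there exists $(\hat u_\e^m)\subset L^1(\Omega)$ with $\hat u_\e^m=u_\e^m$ on $\Omega\cap\e P$ and $\hat u_\e^m\to u^m$ in $L^1(\Omega)$. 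Since both $\tilde u_\e^m$ and $\hat u_\e^m$ coincide with $u_\e^m$ on $\Omega\cap \e P$, the well-posedness argument given in the Remark following Definition \ref{def:conv} yields $v^m=u^m$. Hence $u^m\in SBV^2(\Omega)$ for every $m\in\N$, which gives $u\in GSBV(\Omega)$ and, by the bounds above,
\[
\int_\Omega|\nabla u^m|^2\dx+\mathcal{H}^{n-1}(S_{u^m})\leqslant C,
\]
uniformly in $m$.

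Finally, to upgrade to $GSBV^2(\Omega)$, I would let $m\to+\infty$. Since $u\in L^1(\Omega)$, we have $|u|<+\infty$ $\leb^n$-a.e., so $\nabla u^m=\nabla u\,\chi_{\{|u|<m\}}$ converges $\leb^n$-a.e.\ to $\nabla u$ with $|\nabla u^m|\nearrow|\nabla u|$; by monotone convergence, $\int_\Omega|\nabla u|^2\dx\leqslant C$. For the jump set, one observes that for $\mathcal{H}^{n-1}$-a.e.\ $x\in S_u$ the one-sided traces $u^{\pm}(x)$ are finite, so $\chi_{S_{u^m}}\to \chi_{S_u}$ $\mathcal{H}^{n-1}$-a.e.\ on $S_u$, and $\mathcal{H}^{n-1}(S_u)\leqslant \liminf_m \mathcal{H}^{n-1}(S_{u^m})\leqslant C$. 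This proves $u\in GSBV^2(\Omega)$.

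The step I expect to be the most delicate is the identification $v^m=u^m$: the convergence in Definition \ref{def:conv} only prescribes $u_\e$ on the matrix $\Omega\cap\e P$, and a priori the extension $\tilde u_\e^m$ provided by \cite[Theorem 1.1]{CS1} differs from the one implicit in the hypothesis $u_\e^m\to u^m$. The uniqueness argument embedded in the well-posedness of Definition \ref{def:conv} — which relies on $\mathcal{L}^n(Q\cap P)=C_P>0$ — is precisely what bridges the two extensions.
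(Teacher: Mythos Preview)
Your proposal is correct and follows essentially the same route as the paper: truncate, apply the extension theorem \cite[Theorem 1.1]{CS1} to the truncations, use Ambrosio compactness to get an $SBV^2$ limit $v^m$, and identify $v^m=u^m$ via the well-posedness of Definition \ref{def:conv} (the $C_P>0$ argument). The paper's proof stops at ``$u^m\in SBV^2(\Omega)$ for every $m$, hence $u\in GSBV^2(\Omega)$'' without spelling out the passage to the limit in $m$; your final paragraph makes this step explicit, which is a welcome addition. The diagonal extraction over $m$ you mention is harmless but unnecessary, since for each fixed $m$ the limit $v^m$ is uniquely determined as $u^m$ regardless of the subsequence.
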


\begin{proof}
By Definition \ref{def:conv} there exists $(\tilde u_\e) \subset L^1(\Omega)$ such that $\tilde u_\e = u_\e$ in $\Omega \cap \e P$, and $\tilde u_\e$ converges to $u$ strongly in $L^1(\Omega)$. Let $m\in \N$ and let $u_\e^m$ be the truncated function of $u_\e$ at level $m$. 
Appealing to \cite[Theorem 1.1]{CS1} we deduce the existence of a sequence $(\tilde u_\e^m)$ such that $\tilde u_\e^m=u^m_\e$ in $\Om\cap \e P$ and 
$$
MS(\tilde u_\e^m) \leqslant C F_\e (u_\e^m) \leqslant C F_\e(u_\e),
$$
for some $C>0$, where to establish the last inequality we have used the fact that $F_\e$ decreases by truncations. Therefore, in view of the bound on the energy, invoking \cite[Theorem 4.8]{AFP} yields the existence of a function $v\in SBV^2(\Om)$ and a subsequence of $(\tilde u_\e^m)$ (not relabelled) such that $\tilde u_\e^m \to v$ strongly in $L^1(\Om)$. For every fixed $m\in \N$ we have
$$
0= \lim_{\e \to 0}\int_{\Om \cap \e P}|\tilde u_\e^m - u_\e^m| \dx =  \lim_{\e \to 0}\int_{\Om}|\tilde u_\e^m - {(\tilde u_\e)^m}|\chi_{\Om \cap \e P} \dx = C_P \int_{\Om}|v - u^m| \dx, 
$$
therefore $v=u^m$ and $u^m \in SBV^2(\Om)$ for every $m\in \N$. This implies that $u\in GSBV^2(\Om)$.
\end{proof}

\subsection{$\Gamma$-convergence.}
Our main result is the following theorem.

\begin{theo}\label{T1}
	Let $F_{\varepsilon} \colon L^1(\Om) \longrightarrow (0,+\infty]$ be the functionals defined in \eqref{GG1} and let $
\ell:=\lim_{\e\to 0}\frac{\beta_\e}{\e}$. Then for every sequence of positive numbers decreasing to zero, there exists a subsequence $(\e_k)$ such that the functionals $F_{\e_k}$ $\Gamma$-converge to $F^\ell_{\rm hom} : L^{1}(\Om)\longrightarrow [0,+\infty)$ defined by
		\begin{equation*}
			F^\ell_{\rm hom}(u) := 
			\begin{cases} 
			\displaystyle
			\int_{\Omega}f^\ell_{\rm hom}(\nabla u) \dx 
			+ \int_{S_{u}\cap\Omega} g^\ell_{\rm hom}(\nu_u)\, d\mathcal{H}^{n-1}
			& \mbox{if} \ u \in GSBV^{2}(\Omega), \\ 
			+\infty
			& \mbox{otherwise in } L^{1}(\Omega). 
			%\label{MR1}
			\end{cases}
		\end{equation*}
Moreover, the function $g^\ell_{\hom}$ is independent of $\ell$ and it coincides with $\hat{g}$ as in \eqref{G}, while the function $f^\ell_{\hom}$ satisfies, for every $\xi\in\mathbb{R}^{n}$ and every $\ell\in[0,+\infty]$, the bounds
     	\begin{equation*}%\label{B5}
			\hat{f}(\xi) 
			\leqslant f^\ell_{\rm{hom}}(\xi)
			\leqslant|\xi|^{2}, 			
		\end{equation*}
		where $\hat{f}$ is the quadratic form in \eqref{F}. Furthermore, in the extreme regimes $\ell=0$ and $\ell=+\infty$ we have, respectively, $f^0_{\rm{hom}}(\xi)=\hat f(\xi)$ and $f^\infty_{\rm{hom}}(\xi)=|\xi|^2$ for every $\xi \in \R^n$. Therefore for $\ell=0$ and $\ell=+\infty$ the whole sequence $(F_{\e})$ $\Gamma$-converges. 	
\end{theo}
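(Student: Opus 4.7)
The plan is to split the argument into three parts, paralleling the remaining sections of the paper. First, I would localise the functionals as $F_\varepsilon(u,A)$ for $A\in\mathcal{A}(\Omega)$ and extract a subsequence $(\varepsilon_k)$ along which $\Gamma\text{-}\lim F_{\varepsilon_k}(\cdot,A)$ exists on a countable dense family of open sets by a standard diagonal argument; the extension result \cite[Theorem 1.1]{CS1} applied to any equibounded sequence supplies an $L^1$-convergent companion with $MS$-bounded energy, which guarantees the necessary precompactness for the convergence in Definition \ref{def:conv}. The main non-routine ingredient here is a \emph{fundamental estimate} adapted to the sequential setting: since the convergence of Definition \ref{def:conv} only records the values on the matrix, the customary joining construction by a cut-off function must be carried out with the transition annulus placed inside $\Omega\cap\varepsilon P$, where the volume and surface prefactors in \eqref{GG1} both equal $1$. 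Once this is in place, standard axioms (inner regularity, measure property, translation invariance, the two-sided bound $\widehat F_\varepsilon\leq F_\varepsilon\leq MS$ from \eqref{BB1}) and an integral representation theorem for free-discontinuity functionals yield
\begin{equation*}
F^{\ell,(\varepsilon_k)}_{\rm hom}(u) = \int_\Omega f^{\ell,(\varepsilon_k)}_{\rm hom}(\nabla u)\,dx + \int_{S_u} g^{\ell,(\varepsilon_k)}_{\rm hom}([u],\nu_u)\,d\mathcal{H}^{n-1}
\end{equation*}
on $GSBV^2(\Omega)$, the domain being fixed by the preceding proposition.

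\textbf{Identification of the surface density and bounds on the volume density.} Next, I would prove $g^{\ell,(\varepsilon_k)}_{\rm hom}([u],\nu) = \hat g(\nu)$, giving at once independence of $[u]$, of $\ell$ and of the subsequence. The inequality $g^\ell_{\rm hom}\geq \hat g$ is automatic from $F_\varepsilon\geq \widehat F_\varepsilon$ and the known identification of the $\Gamma$-limit of $\widehat F_\varepsilon$. For the reverse inequality, given an almost optimal competitor $w$ for $\hat g(\nu)$ in \eqref{G}, I would extend the underlying Caccioppoli partition across each $\varepsilon$-inclusion without essentially adding perimeter: $w$ is piecewise constant with traces on each inclusion boundary taking only finitely many values, so each inclusion can be filled by a partition whose perimeter is $\mathcal{O}(\varepsilon^{n-1})$; weighted by $\beta_\varepsilon$ and summed over the $\mathcal{O}(\varepsilon^{-(n-1)})$ inclusions meeting the interface, the extra cost is $\mathcal{O}(\beta_\varepsilon)\to 0$. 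This careful geometric construction is the first obstacle of the proof. For the volume integrand, $f^\ell_{\rm hom}(\xi)\geq \hat f(\xi)$ follows again from $F_\varepsilon\geq \widehat F_\varepsilon$, while $f^\ell_{\rm hom}(\xi)\leq |\xi|^2$ follows by using the affine function $u_\xi(x)=\xi\cdot x$ as a (constant in $\varepsilon$) recovery sequence.

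\textbf{Extreme regimes and full-sequence convergence.} Finally, for $\ell=0$, I would construct a recovery sequence for $f^0_{\rm hom}(\xi)=\hat f(\xi)$ by taking a $\widehat F_\varepsilon$-optimal $v_\varepsilon$ converging to $u_\xi$ on the matrix and extending it as a constant inside each inclusion, so that the extra volume contribution vanishes and the extra surface cost is at most $\mathcal{O}(\beta_\varepsilon \varepsilon^{n-1}\cdot \varepsilon^{-n}) = \mathcal{O}(\beta_\varepsilon/\varepsilon)\to 0$. For $\ell=+\infty$, the upper bound $f^\infty_{\rm hom}(\xi)\leq |\xi|^2$ is trivial; the hard direction $f^\infty_{\rm hom}(\xi)\geq |\xi|^2$, which I expect to be the main obstacle of the whole paper, is proved by classifying each $\varepsilon$-cube $Q$ covering $\Omega$ as \emph{smooth} if $\mathcal{H}^{n-1}(S_{u_\varepsilon}\cap Q)$ is of order at most $\varepsilon^{n-1}$ and as \emph{bad} otherwise. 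On smooth cubes, the elimination property of De Giorgi-Carriero-Leaci \cite{DeGiorgi} shows that $u_\varepsilon$ is Sobolev on a slightly smaller concentric cube, so weak lower semicontinuity yields an asymptotic volume density of at least $|\xi|^2$; on bad cubes, the surface cost is at least $c\,\beta_\varepsilon \varepsilon^{n-1}$, and $\beta_\varepsilon/\varepsilon\to +\infty$ forces their total Lebesgue measure to vanish. Summing over cubes and passing to the liminf yields the desired bound. Since in both extremal regimes $f^\ell_{\rm hom}$ and $g^\ell_{\rm hom}$ are characterised uniquely and independently of the subsequence, the Urysohn property of $\Gamma$-convergence promotes the sequential $\Gamma$-convergence to hold for the entire family $(F_\varepsilon)$, completing the proof.
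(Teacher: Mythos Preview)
Your three-part skeleton matches the paper's structure, and your treatment of the abstract $\Gamma$-convergence (including the fundamental estimate with cut-offs supported in $\varepsilon P$) and the subcritical case $\ell=0$ are essentially what is done. There are, however, two substantive gaps.

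\textbf{Surface density.} Your counting argument for $g^\ell_{\rm hom}\le\hat g$ is incomplete. You claim that only $\mathcal{O}(\varepsilon^{-(n-1)})$ inclusions ``meet the interface'' and require a nontrivial filling, but for a generic competitor $w\in\mathcal P(Q^\nu\cap\varepsilon P)$ nothing prevents $S_w$ from touching the annulus of arbitrarily many $\varepsilon$-cells; bounding the number of intersected cells by $\mathcal H^{n-1}(S_w)$ alone is a box-counting statement that fails for general rectifiable sets. The paper instead (i) interposes a comparison functional $G^\delta_\varepsilon$ with surface density $\chi_P+\delta(1+|[u]|)$ so that $g^\ell_{\rm hom}\le g^\delta$, (ii) takes the \emph{minimiser} $\bar u$ of the perforated cell problem, and (iii) invokes the Fracture Lemma (Lemma~\ref{fracture-lemma}): local minimality plus smallness of the jump in an annulus produces an entire sphere free of jump, across which one extends by a constant without creating any new jump in $P$. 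Cells with large jump are filled brutally, but their number is controlled by the total jump measure. One then sends $\delta\to 0$. Your direct construction omits both minimality and the Fracture Lemma and, as written, would need an extra polyhedral-approximation step to justify the cell count---a different route you have not carried out.

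\textbf{Supercritical lower bound.} Your plan for $\ell=\infty$ applies the elimination property directly to $u_\varepsilon$ in each smooth cube, but elimination (Theorem~\ref{elimination}) is a regularity statement for \emph{local minimisers} of the Mumford--Shah functional, and $u_\varepsilon$ is not one. The paper's argument is substantially more elaborate: in each good cube one first passes to a local minimiser $\hat v_\varepsilon$ of the rescaled functional with the same boundary datum, applies elimination to (a rescaling of) $\hat v_\varepsilon$ to confine its jump to a thin neighbourhood of the boundary, and then compares $\hat v_\varepsilon$ with its harmonic replacement $\tilde v_\varepsilon$ via an auxiliary $\Gamma$-convergence result (Lemma~\ref{GammaMin}), obtaining $\int|D\tilde v_\varepsilon|^2\le(1+\eta)\int|\nabla \hat v_\varepsilon|^2$. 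Only after gluing these $\tilde v_\varepsilon$ into an improved sequence $w_\varepsilon$ (shown to converge to $u_\xi$ strongly in $L^1$) can one invoke weak lower semicontinuity of the Dirichlet integral. Your outline omits both the passage to a minimiser and the harmonic-comparison step; without them the claim that good cubes carry asymptotic volume density at least $|\xi|^2$ does not follow.
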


We divide the proof of Theorem \ref{T1} into two main steps carried out, respectively, in Section \ref{sect:abstract} and in Sections \ref{sect:g}-\ref{sect:f}.
Specifically, the first step deals with the existence of a subsequence of $F_\e$ which $\Gamma$-converges to a homogeneous free-discontinuity functional of the form
$$
\int_\Om f^\ell(\nabla u)\dx + \int_{S_u}g^\ell(u^+-u^-, \nu_u)\,d\mathcal H^{n-1}.
$$
Then in the second step we identify the integrands $g^\ell$ and $f^\ell$ in the three different scaling regimes. 

\medskip

\subsection{Convergence of minimisation problems}
Thanks to the $\Gamma$-convergence result Theorem \ref{T1}, we are able to establish a convergence result for the minimisation problems associated with a suitable perturbation of the functionals $F_\e$.

\medskip

Let $g \in L^{\infty}(\Omega)$, and let $G_{\varepsilon} : L^{1}(\Omega) \longrightarrow [0,+\infty)$ be the functionals defined as
	\begin{equation*}%\label{Ge}
		G_{\varepsilon}(u):= \int_{\Omega \cap \varepsilon P}|u(x)-g(x)| \ dx.
	\end{equation*} 
Let moreover $G: L^{1}(\Omega) \longrightarrow [0,+\infty)$ be given by
\begin{equation*}%\label{G-force}
G(u):= C_P\int_{\Omega}|u(x)-g(x)| \ dx,
\end{equation*}
where $C_P:= \mathcal{L}^n(Q_{\frac{1}{2},1})$. 

Let $\e>0$ be fixed; we start by observing that $F_\e + G_\e$ is lower-semicontinuous with respect to the strong $L^1(\Om)$-convergence. Moreover, since $F_\e$ decreases by truncations, we can readily deduce that a minimising sequence $(u_j)$ for  
$F_\e + G_\e$ satisfies the uniform bound $\|u_j\|_{L^\infty(\Om)} \leqslant \|g\|_{L^\infty(\Om)}$. The latter allows us to
to invoke the compactness result \cite[Theorem 4.7]{AFP} and thus to use the direct methods to deduce the existence of a minimiser $u_\e \in SBV^2(\Om)$ for $F_\e + G_\e$. 

\medskip

The following proposition establishes a convergence result for minimisers and minimum values of $F_{\varepsilon} + G_{\varepsilon}$.

\begin{propo}\label{propo:conv_min}
Let $(\e_k)$ and $F^\ell_{\rm hom}$ be, respectively, the subsequence and the functional whose existence is established in Theorem \ref{T1}. Let $k\in \N$ be fixed and let $u_k \in L^1(\Om)$ be a solution to 
$$
m_{k}:= \min\big\{F_{\e_k}(u)+ G_{\e_k}(u): u\in L^1(\Om) \big\}.
$$ 
Then, up to subsequences (not relabelled), $u_k$ converges in the sense of Definition \ref{def:conv} to a function $\bar u \in SBV^2(\Om)$ which solves     
$$
m:= \min\big\{F^\ell_{{\rm hom}}(u)+ G(u): u\in L^1(\Om) \big\}.
$$ 
Moreover we have $m_{k} \to m$, as $k\to +\infty$. 
\end{propo}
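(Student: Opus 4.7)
The plan is to apply the standard $\Gamma$-convergence scheme for perturbed functionals: establish equi-coerciveness along the convergence of Definition \ref{def:conv}, verify that the perturbations $G_{\e_k}$ converge continuously to $G$, and then conclude via the $\Gamma$-convergence of $F_{\e_k}+G_{\e_k}$ to $F^\ell_{\rm hom}+G$ that minimisers and minimum values pass to the limit.

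For equi-coerciveness, I would test the minimality of $u_k$ against $u\equiv 0$ to obtain $F_{\e_k}(u_k)\leqslant m_k\leqslant G_{\e_k}(0)\leqslant \|g\|_{L^1(\Om)}$, so $\sup_k F_{\e_k}(u_k)<+\infty$. By the $L^\infty$-bound recalled before the statement (and, if needed, by truncation at level $\|g\|_{L^\infty(\Om)}$, using that $F_\e$ and $G_\e$ decrease under such truncation together with Remark \ref{conv-deb-troncata}(iii)), I may also assume $\sup_k\|u_k\|_{L^\infty(\Om)}<+\infty$. The extension theorem \cite[Theorem 1.1]{CS1}, already used to prove that the domain of the $\Gamma$-limit is $GSBV^2(\Om)$, then produces $\tilde u_k\in SBV^2(\Om)\cap L^\infty(\Om)$ with $\tilde u_k=u_k$ on $\Om\cap\e_k P$, uniformly bounded in $L^\infty$, and satisfying $MS(\tilde u_k)\leqslant CF_{\e_k}(u_k)$. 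Ambrosio's compactness theorem \cite[Theorem 4.8]{AFP} yields, along a subsequence, $\tilde u_k\to\bar u$ strongly in $L^1(\Om)$ with $\bar u\in SBV^2(\Om)$, which is exactly convergence $u_k\to \bar u$ in the sense of Definition \ref{def:conv}.

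For continuity of the perturbation, I would show that whenever $v_k\to v$ in the sense of Definition \ref{def:conv} with $\sup_k\|v_k\|_{L^\infty(\Om\cap\e_k P)}<+\infty$, one has $G_{\e_k}(v_k)\to G(v)$. Indeed, picking an extension $\tilde v_k$ as in Definition \ref{def:conv},
\begin{equation*}
G_{\e_k}(v_k)=\int_\Om |\tilde v_k-g|\,\chi_{\Om\cap\e_k P}\,dx.
\end{equation*}
Since $|\tilde v_k-g|\to|v-g|$ strongly in $L^1(\Om)$ and $\chi_{\Om\cap\e_k P}\rightharpoonup^* C_P$ in $L^\infty(\Om)$ (by the standard periodic weak-$\ast$ convergence used in Remark \ref{rem:convBS}), the strong–weak$^\ast$ duality in $L^1$–$L^\infty$ gives the desired convergence to $C_P\int_\Om|v-g|\,dx=G(v)$. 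Applied to our minimising sequence, this yields in particular $G_{\e_k}(u_k)\to G(\bar u)$.

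Finally, I would combine Theorem \ref{T1} with this continuous convergence to conclude that $F_{\e_k}+G_{\e_k}$ $\Gamma$-converges to $F^\ell_{\rm hom}+G$ along the convergence of Definition \ref{def:conv}. Then the ansatz-free lower bound gives $F^\ell_{\rm hom}(\bar u)+G(\bar u)\leqslant\liminf_k m_k$, while a recovery sequence for any fixed competitor $v\in L^1(\Om)$ provides $\limsup_k m_k\leqslant F^\ell_{\rm hom}(v)+G(v)$; taking the infimum over $v$ yields $\limsup_k m_k\leqslant m$ and hence both $m_k\to m$ and the minimality of $\bar u$. The main obstacle is the continuity of $G_{\e_k}$ in Step 2: strong $L^1(\Om)$-convergence of $u_k$ itself is unavailable, so one has to carefully exploit the extensions $\tilde u_k$ together with the weak-$\ast$ oscillation of $\chi_{\Om\cap\e_k P}$, and make sure the $L^\infty$-truncation performed for equi-coerciveness is compatible with the convergence (via Remark \ref{conv-deb-troncata}(iii)).
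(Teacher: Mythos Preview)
Your proposal is correct and follows essentially the same route as the paper: compactness via truncation plus the extension theorem from \cite{CS1} and Ambrosio's theorem, continuous convergence of the fidelity terms $G_{\e_k}\to G$ along the convergence of Definition~\ref{def:conv}, and then the standard $\Gamma$-convergence liminf/limsup argument. The paper is terser (it states $\int_{\Om\cap\e_k P}|u_k-g|\,dx\to G(\bar u)$ without spelling out the strong--weak$^*$ pairing you describe), but the logic is the same; note also that the $L^\infty$ hypothesis you impose in Step~2 is in fact unnecessary, since $\tilde v_k\to v$ in $L^1(\Om)$ already gives $|\tilde v_k-g|\to|v-g|$ in $L^1(\Om)$, which pairs with $\chi_{\Om\cap\e_k P}\rightharpoonup^* C_P$ in $L^\infty(\Om)$.
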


\begin{proof}
Let $(u_k)\subset L^1(\Om)$ be a sequence of minimisers for $F_{\e_k}+ G_{\e_k}$\ie
$$
m_k=\min\big\{F_{\e_k}(u_k)+ G_{\e_k}(u_k): u\in L^1(\Om) \big\},
$$
hence in particular $\sup_{k\in \N}F_{\e_k}(u_k)<+\infty$. A truncation argument also yields $\|u_k\|_{L^\infty(\Om)} \leqslant \|g\|_{L^\infty(\Om)}$, so that we can readily deduce the existence of a subsequence of $(u_k)$ (not relabelled) and a function $\bar u \in SBV^2(\Om)$ such that $u_k \to \bar u$ in the sense of Definition \ref{def:conv}. Thus, in particular
$$
\lim_{k \to +\infty}\int_{\Om \cap \e_k P} |u_k -g|\dx = C_P\int_{\Om} |\bar u -g|\dx=G(\bar u),
$$
therefore in view of Theorem \ref{T1} we get
\begin{eqnarray}\label{mk-uno}
F^\ell_{\rm hom}(\bar u)+ G(\bar u)  \leqslant \liminf_{k \to +\infty} \Big(F_{\e_k}(u_k)+ G_{\e_k}(u_k)\Big)
 =\liminf_{k \to +\infty} m_k.
\end{eqnarray}
Now let $w\in SBV^2(\Om)$ be arbitrary; again appealing to Theorem \ref{T1} we can find $(w_k) \subset L^1(\Om)$ such that $w_k \to w$ in the sense of Definition \ref{def:conv} and 
$$
\lim_{k \to +\infty} F_{\e_k}(w_k) = F^\ell_{\rm hom}(w). 
$$
Since
$$
\lim_{k \to +\infty}\int_{\Om \cap \e P} |w_k -g|\dx = G(w),
$$
we immediately deduce
\begin{eqnarray}\label{mk-due}
\limsup_{k \to +\infty} m_k  \leqslant \limsup_{k \to +\infty} \Big(F_{\e_k}(w_k)+ G_{\e_k}(w_k)\Big)= F^\ell_{\rm hom}(w)+G(w). 
\end{eqnarray}
Thus, putting together \eqref{mk-uno} and \eqref{mk-due} we obtain
\begin{eqnarray*}
F^\ell_{\rm hom}(\bar u)+ G(\bar u) &\leqslant& \liminf_{k \to +\infty} m_k
\leqslant \limsup_{k \to +\infty} m_k
\\
 &\leqslant& F^\ell_{\rm hom}(w)+ G(w),
\end{eqnarray*}
hence by the arbitrariness of $w$ we deduce that $\bar u$ is a minimiser for $F^\ell_{\rm hom}+G$. Finally, taking $w=\bar u$ also implies $m_k \to m$. Since moreover this limit does not depend on the subsequence, the convergence holds true for the whole $(m_k)$. 
\end{proof}

\section{$\Gamma$-convergence and integral representation} \label{sect:abstract}
In this section we prove the existence of a $\Gamma$-convergent subsequence of $F_\e$ to a homogeneous free-discontinuity functional of the form
$$
\int_\Om f^\ell(\nabla u)\dx + \int_{S_u}g^\ell(u^+-u^-, \nu_u)\,d\mathcal H^{n-1}.
$$
To do so, we follow the so-called localisation method of $\Gamma$-convergence \cite{DM}, with the caveat that, since we prove a sequential $\Gamma$-convergence result, 
we cannot apply the abstract general theory directly.

As a first step, we localise the functionals, and define $F_{\varepsilon} : L^{1}(\Omega) \times \mathcal{A}(\Omega) \longrightarrow [0,+\infty]$ as 
\begin{equation}
F_{\varepsilon}(u,U) := 
\begin{cases} 
\displaystyle
\int_{U}|\nabla u|^{2} \dx + \mathcal{H}^{n-1}(S_{u}\cap(U\cap\varepsilon P)) + \beta_{\varepsilon}\mathcal{H}^{n-1}(S_{u}\cap (U \setminus \varepsilon P)) 
& \!\!\!\mbox{if } u \in GSBV^{2}(U), \\ 
+\infty 
&  \!\!\mbox{otherwise in } L^{1}(\Omega).
\end{cases}
\label{GG2}
\end{equation}

	\begin{rem}\label{T3}
For every fixed $\varepsilon>0$ the functional $F_{\varepsilon}: L^{1}(\Omega) \times \mathcal{A}(\Omega) \longrightarrow [0,+\infty]$ satisfies the following properties, for every $u\in L^{1}(\Omega)$, and $U\in \mathcal{A}(\Omega)$: 
	\begin{enumerate}
		\item $F_{\varepsilon}(u,\cdot)$ is increasing: 
		$F_{\varepsilon}(u,V)\leqslant F_{\varepsilon}(u,U)$
		for every $V\in\mathcal{A}(\Omega)$, $V \subset U$; 
		\item $F_{\varepsilon}(u,\cdot)$ is super-additive: for
		$U,V\in \mathcal{A}(\Omega)$, $U\cap V=\emptyset$, then  
		\[F_{\varepsilon}(u,U \cup V)\geqslant F_{\varepsilon}(u,U)
		+F_{\varepsilon}(u,V);\]
		\item $F_{\varepsilon}$ is local: $F_{\varepsilon}(u,U)=F_{\varepsilon}(v,U)$  
		for every $v\in L^{1}(\Omega)$ such that $u=v$ $\mathcal{L}^{n}$-a.e. in $U$;
		\item $F_{\varepsilon}$ is invariant by translations in $u$:
		$F_{\varepsilon}(u+s,U)=F_{\varepsilon}(u,U)$ for every $s\in\mathbb{R}$; 
		\item $F_{\varepsilon}(\cdot,U)$ is decreasing by truncations:
		$F_{\varepsilon}(u^m,U)\leqslant F_{\varepsilon}(u,U)$ for every $m>0$,
		where $u^m:=(u\wedge m)\vee(-m)$.
	\end{enumerate} 
\end{rem}

Let $(\e_k)$  be a positive sequence of real numbers decreasing to $0$. We define the localised versions $F', F'': L^1(\Om)\times \Am(\Om)\to [0,+\infty]$ of \eqref{def:G-li} and \eqref{def:G-ls} as 
\begin{equation}\label{Fpsloc}
F'(\cdot, U):=\Gamma\hbox{-}\liminf_{k\to+\infty} F_{\e_k}(\cdot, U),\qquad
F''(\cdot, U):=\Gamma\hbox{-}\limsup_{k\to+\infty} F_{\e_k}(\cdot, U),
\end{equation}
for every $U\in \Am(\Om)$. We notice that in our case the functionals $F'$ and $F''$ will depend on $\ell$, however to simplify the notation at this stage we prefer to omit this dependence. 

It is easy to prove that $F'$ and $F''$ are both lower semicontinuous with respect to the convergence in Definition \ref{def:conv}.
In view of Remark \ref{T3}, they are easily seen to be increasing and local; further, $F'$ is superadditive. 
%Indeed, since the infimum in \eqref{Fpsloc} is a minimum, there exists 
%$(u_{k}) \subset L^{1}(\Omega)$ with $u_{k} \rightarrow u$ in the sense of Definition \ref{def:conv} topology such that $\liminf_{k \rightarrow +\infty}F_{\varepsilon_{k}}(u_{k}, U \cup V) = F^{'}(u, U \cap V)$, for any $U, V \in \mathcal{A}(\Omega)$ with $U\cap V= \emptyset$. Then 
%	\begin{align*}
%		F'(u, U \cap V) 
%		&= \liminf_{k \rightarrow +\infty}F_{\varepsilon_{k}}(u_{k}, U \cup V) 
%		\geqslant \liminf_{k \rightarrow +\infty}F_{\varepsilon_{k}}(u_{k}, U) 
%		+ \liminf_{k \rightarrow +\infty}F_{\varepsilon_{k}}(u_{k}, V) \\
%		&\geqslant F'(u,U) + F'(u,V),
%	\end{align*}
%where we have used property $(2)$ of Remark \ref{T3}.
Moreover, by combining $(5)$ of Remark \ref{T3} and iii. of Remark \ref{conv-deb-troncata} it can be immediately checked that both $F'$ and $F''$ decrease by truncation.

In general $F'(u,\cdot)$ and $F''(u,\cdot)$ are not inner regular. Therefore we also consider their inner regular envelopes, that is the functionals $F'_-,F''_-\colon L^1(\Om)\times \Am(\Om)\longrightarrow[0,+\infty]$ defined as
\begin{equation*}%\label{liir}
{F}'_-(u,U):=\sup\big\{{F}'(u,V)\colon V\subset \subset U,\; V\in \Am(\Om)\big\}.
\end{equation*}
and
\begin{equation*}%\label{lsir}
{F}''_-(u,U):=\sup\big\{{F}''(u,V)\colon V\subset \subset U,\; V\in \Am(\Om)\big\}.
\end{equation*}

\begin{rem}\label{pro-inner-reg-env} 
The functionals ${F}'_-$ and ${F}''_-$ are both increasing, lower semicontinuous \cite[Remark 15.10]{DM}, and local \cite[Remark 15.25]{DM}, and decrease by truncations. Moreover, $F'_-$ is superadditive \cite[Remark 15.10]{DM}. 
\end{rem}

The next proposition is the analogue of the compactness result \cite[Theorem 16.9]{DM}, when the sequential notion of $\Gamma$-convergence in Definition \ref{def:G-conv} is taken into account.

\begin{propo} Let $F_{\varepsilon} \colon L^1(\Om) \times \Am(\Om) \longrightarrow [0,+\infty]$ be the functionals defined in \eqref{GG2}. 
Then for every sequence of positive numbers decreasing to zero, there exists a subsequence $(\e_k)$ such that the corresponding functionals $F'$ and $F''$ in \eqref{Fpsloc} 
	satisfy
	$F'_-=F''_-$. 
\end{propo}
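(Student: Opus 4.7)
The plan is to adapt the classical compactness theorem for $\Gamma$-convergence of increasing set functions (see, e.g., \cite[Theorem 16.9]{DM}) to the present sequential $\Gamma$-convergence framework set up in Definition \ref{def:G-conv}. The approach combines a diagonal extraction on a countable family of test open sets with an inner-regularity/density argument to transfer the equality of the two envelopes from that countable family to all of $\Am(\Om)$.

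As a first step, I would fix a countable collection $\mathcal{D}\subset \Am(\Om)$ that is dense for inner regularity, meaning that for every monotone $\varphi\colon \Am(\Om)\to [0,+\infty]$ and every $U\in \Am(\Om)$,
$$\sup\{\varphi(V) : V\in\mathcal{D},\ V\subset\subset U\}=\sup\{\varphi(V) : V\in\Am(\Om),\ V\subset\subset U\}.$$
A concrete choice is the family of finite unions of open rectangles with rational vertices compactly contained in $\Om$.

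Next, for each fixed $V\in\mathcal{D}$, I would extract a subsequence of $(\e_k)$ along which $F'(\cdot,V)=F''(\cdot,V)$ pointwise on $L^1(\Om)$. This is the sequential analogue of the abstract $\Gamma$-compactness result: since the convergence in Definition \ref{def:conv} is sequential with unique limits (by the well-posedness remark following it) and $L^1(\Om)$ is separable, one can iterate the classical liminf/limsup extraction on a countable $L^1$-dense subset of $L^1(\Om)$, obtaining the equality first on this dense subset and then, by lower semicontinuity, on all of $L^1(\Om)$. A Cantor diagonal argument run over the countable family $\mathcal{D}$ then yields a single subsequence, still denoted $(\e_k)$, such that $F'(u,V)=F''(u,V)$ for every $u\in L^1(\Om)$ and every $V\in\mathcal{D}$.

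To conclude, I would exploit the density of $\mathcal{D}$ together with the definition of the inner regular envelope: for any $u\in L^1(\Om)$ and $U\in\Am(\Om)$,
$$F'_-(u,U)=\sup_{V\in\mathcal{D},\,V\subset\subset U} F'(u,V)=\sup_{V\in\mathcal{D},\,V\subset\subset U} F''(u,V)=F''_-(u,U),$$
which is the desired equality. The main obstacle I anticipate is the single-set compactness step, since the usual $\Gamma$-compactness statements are topological whereas the convergence of Definition \ref{def:conv} is sequential and $\e$-dependent through the sets $\Om\cap\e P$. I would handle this by arguing directly from the definitions \eqref{def:G-li}--\eqref{def:G-ls}, diagonalizing on a countable $L^1$-dense subset of $L^1(\Om)$ and exploiting that $F_\e$ decreases by truncations (Remark \ref{T3}(5)) together with the extension estimate \eqref{extension-Lu} to reduce to bounded sequences, for which uniqueness of sequential limits and separability suffice to run the extraction.
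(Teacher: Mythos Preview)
Your overall scheme matches the paper's: fix a countable dense family of open sets (the paper uses the finite unions of open rectangles with rational vertices, $\mathcal{R}(\Omega)$), extract by diagonalisation a subsequence along which $F'=F''$ on this family, and then use density to pass to the inner regular envelopes. The paper's treatment of the single-set step is even terser than yours: for fixed $R$ and $u$ it picks an optimal sequence for $F'(u,R)$, extracts a subsequence along which the $\liminf$ is a limit, observes that this dominates the $F''$ computed along the subsequence, and then diagonalises only over the countable family $\mathcal{R}(\Omega)$ without further discussing the dependence on $u$.

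There is, however, a genuine gap in your proposed justification of that step. You suggest diagonalising over a countable $L^1$-dense subset $D\subset L^1(\Omega)$ and then extending the equality $F'(\cdot,V)=F''(\cdot,V)$ from $D$ to all of $L^1(\Omega)$ ``by lower semicontinuity''. But lower semicontinuity of $F'$ and $F''$ only gives $F'(u)\le\liminf_n F'(u_n)$ and $F''(u)\le\liminf_n F''(u_n)$ for $u_n\in D$ with $u_n\to u$; from $F'(u_n)=F''(u_n)$ on $D$ one cannot conclude $F''(u)\le F'(u)$ for general $u$, since neither functional is upper semicontinuous. The ingredients you list at the end (truncation, the extension estimate \eqref{extension-Lu}, separability, uniqueness of limits) do not by themselves supply the missing upper bound either; what would be needed is rather a diagonal construction showing that $F''(u)$ can be approximated from above by $F''(u_n)$ along some $u_n\in D$ with $F'(u_n)$ not exceeding $F'(u)$, and this has to be argued for the $\varepsilon$-dependent convergence of Definition~\ref{def:conv}. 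The paper itself does not spell this point out, so your plan is no less complete than the published proof; but the bare lower-semicontinuity extension, as written, does not work.
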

\begin{proof}
Let $\mathcal{R}(\Omega)$ be the class of all finite unions of open rectangles of $\Omega$ with rational vertices. Let $R \in \mathcal{R}(\Omega)$ and $u\in L^1(R)$ be fixed. Since $F'$ in \eqref{Fpsloc} is actually attained along a sequence we can find $(u_{\varepsilon}) \subset L^{1}(R)$ converging to $u$ in the sense of Definition \ref{def:conv} such that
	\begin{eqnarray*}
		F''(u,R) \geqslant F'(u,R)
		&=& \liminf_{\varepsilon \rightarrow 0} F_{\varepsilon}(u_{\varepsilon},R) 
		= \lim_{k \rightarrow +\infty} F_{\varepsilon_{k}}(u_{\varepsilon_{k}},R) \\
		&=& \limsup_{k \rightarrow +\infty} F_{\varepsilon_{k}}(u_{\varepsilon_{k}},R) 
		\geqslant  F''(u,R),
	\end{eqnarray*}
where the subsequence $\e_k$ depends on $R$. Since $\mathcal{R}(\Omega)$ is countable, by a diagonal argument, we can find a subsequence $\tilde{\varepsilon}_{k}$ such that $F'(u,R) = F''(u,R)$ for every $u \in L^{1}(\Om)$ and for every $R \in \mathcal{R}(\Om)$. 
Since $\mathcal{R}(\Omega)$ is dense in $ \mathcal{A}(\Omega)$, we have
	\begin{align*}
		{F}'_-(u,U) 
		&= \sup\{{F}'(u,V):  V \in \mathcal{A}(\Omega), \ V \subset\subset U\} 
		= \sup\{{F}'(u,R):  R \in \mathcal{R}(\Om), \ R \subset\subset U\} \\
		&= \sup\{{F}''(u,R):  R \in \mathcal{R}(\Om), \ R \subset\subset U\} 
		= \sup\{{F}'(u,V): V \in \mathcal{A}(\Omega), \ V \subset\subset U\} \\		
		&= {F}''_-(u,U),
	\end{align*}
which concludes the proof.
\end{proof}
We now reintroduce the $\ell$-dependence in our notation and define the functional 
\begin{equation}\label{Def:astratta-F}
F^\ell_{\rm{hom}}:=F'_-=F''_-. 
\end{equation}
We observe that by monotonicity we always have
\begin{equation}\label{trivial}
F''_-=F'_-\leqslant F'\leqslant F''.
\end{equation}
Therefore, if we show that $F''$ is inner regular\ie $F''=F''_-$, by combining \eqref{Def:astratta-F} and \eqref{trivial} we get $F'=F''=F^\ell_{\rm{hom}}$, and therefore the $\Gamma$-convergence of the subsequence $(F_{\e_{k}})$ to the functional $F^\ell_{\rm{hom}}$.

A first step towards the proof of the inner-regularity of $F''$ consists in proving that the functionals $F_{\varepsilon}$ satisfy a fundamental estimate, uniformly in $\e$. The fundamental estimate we prove is non-standard. Indeed, we 
need an error term that is infinitesimal for the convergence in Definition \ref{def:conv}, and hence that only weights the functions outside the weak inclusion. We note that an analogous estimate was also established by Braides and Garroni in \cite[Proposition 3.3]{BrGa}.

\begin{lemma}(Fundamental Estimate in perforated domains).\label{lemma:FE} 
For every $\eta>0$, and for every $U'$, $U''$, $V \in \mathcal{A}(\Omega)$, with $U'\subset\subset U''$, there exists a constant $M(\eta)>0$ satisfying the following property: for every $\varepsilon>0$, for every $u \in L^{1}(\Omega)$ with $u \in SBV^{2}(U'')$, and for every $v \in L^{1}(\Omega)$ with $v \in SBV^{2}(V)$, there exists a function $\varphi \in {C}_{0}^{\infty}(\Omega)$ with $\varphi = 1$ in a neighbourhood of $U'$, $\supp\, \varphi \subset U''$ and $0 \leqslant \varphi \leqslant 1$ such that
		\begin{equation}
			F_{\varepsilon}
			(\varphi u + (1-\varphi) v, U' \cup V)
			\leqslant (1+\eta)\left(F_{\varepsilon}(u,U'') 
			+ F_{\varepsilon}(v,V)\right)
			+ M(\eta)||u-v||_{L^{2}(S\cap \e P)}
			\label{FE}
		\end{equation}
with $S:=(U''\setminus U') \cap V$.
\end{lemma}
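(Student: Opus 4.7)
The plan is to follow the classical ``tubes of cut-off functions'' strategy for fundamental estimates (cf.~\cite{BrGa}), suitably adapted to the perforated geometry of our problem so that the error term only picks up $\|u-v\|_{L^2(S\cap \e P)}$. Given $\eta>0$, I would fix an integer $N$ to be chosen later, and set $\delta:=\mathrm{dist}(\overline{U'},\partial U'')$. I then select nested open sets
\[
U' = U_0 \subset\subset U_1 \subset\subset \cdots \subset\subset U_N \subset\subset U''
\]
with $\mathrm{dist}(\overline{U_{i-1}},\partial U_i)\geqslant \delta/(N+1)$, drawn from a continuous family (for instance sub-level sets of $\mathrm{dist}(\cdot,U')$) so that for a.e.\ choice of parameters we have $\mathcal{H}^{n-1}((S_u\cup S_v)\cap \partial U_i)=0$. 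The central ingredient is, for every $i=1,\ldots,N$, a cut-off $\varphi_i\in C_0^\infty(\Omega)$ with $\varphi_i\equiv 1$ on a neighbourhood of $\overline{U_{i-1}}$, $\supp\varphi_i\subset U_i$, $0\leqslant \varphi_i\leqslant 1$, $\|\nabla\varphi_i\|_\infty\leqslant cN/\delta$, and, crucially, $\nabla\varphi_i\equiv 0$ on $\Omega\setminus \e P$. Concretely, one starts from a standard smooth cut-off between $U_{i-1}$ and $U_i$ and modifies it inside each $\e$-inclusion $I_z := \e(z+Q_{1/2})$ by replacing its values on $I_z$ with a suitable average, smoothly reconnecting in an $O(\e)$-tubular neighbourhood of $\partial I_z$: since the oscillation of the original cut-off on $I_z$ is $O(\e N/\delta)$ while the reconnection width is $O(\e)$, the sup-norm of $|\nabla \varphi_i|$ stays of order $N/\delta$.

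Setting $w_i:=\varphi_i u + (1-\varphi_i)\, v$, I decompose $U'\cup V$ into the (essentially) disjoint open pieces $E_1^i:=(U'\cup V)\cap U_{i-1}$, $E_2^i:=(U'\cup V)\cap(U_i\setminus\overline{U_{i-1}})$ and $E_3^i:=(U'\cup V)\setminus \overline{U_i}$; on $E_1^i$ we have $w_i=u$ and on $E_3^i$ we have $w_i=v$, so those contributions are bounded by $F_\e(u,U'')$ and $F_\e(v,V)$ respectively. On the transition layer $E_2^i\subset S$, using
\[
\nabla w_i = \varphi_i\nabla u + (1-\varphi_i)\nabla v + (u-v)\nabla \varphi_i,
\]
Young's inequality with a small parameter $\sigma>0$, the inclusion $S_{w_i}\subset S_u\cup S_v$, and the vanishing of $\nabla\varphi_i$ on $\Omega\setminus\e P$, I obtain
\[
F_\e(w_i, E_2^i) \leqslant (1+\sigma)\bigl[F_\e(u,E_2^i)+F_\e(v,E_2^i)\bigr] + \frac{c(\sigma)N^2}{\delta^2}\int_{E_2^i\cap \e P}|u-v|^2\,dx.
\]
Since the $E_2^i$'s are pairwise disjoint subsets of $S\subset U''\cap V$, summing over $i$ yields $\sum_i F_\e(u,E_2^i)\leqslant F_\e(u,U'')$ and $\sum_i F_\e(v,E_2^i)\leqslant F_\e(v,V)$, so a pigeonhole argument selects an index $i^*$ with $F_\e(u,E_2^{i^*})+F_\e(v,E_2^{i^*})\leqslant \tfrac{1}{N}\bigl[F_\e(u,U'')+F_\e(v,V)\bigr]$. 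Taking $\varphi:=\varphi_{i^*}$, fixing $\sigma=1$ and picking $N\geqslant 2/\eta$ then produces \eqref{FE} with $M(\eta)=c_\eta/\delta^2$, the $L^2$-term being interpreted as $\|u-v\|_{L^2(S\cap \e P)}^2$.

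The main obstacle is the perforated-domain-tailored construction of $\varphi_i$ in the first paragraph. In the classical fundamental estimate any smooth cut-off suffices, and the error is controlled by $\|u-v\|_{L^2}$ on the whole transition region. Here, however, under the convergence of Definition~\ref{def:conv} one has no control on $u-v$ on the inclusions $\Omega\setminus \e P$, so one is forced to impose $\nabla \varphi_i\equiv 0$ there, while still keeping $\|\nabla \varphi_i\|_\infty$ of order $N/\delta$ uniformly in $\e$. The feasibility of this rests on the fact that the inclusions are connected, uniformly sized and uniformly separated at scale $\e$, so that an inclusion-by-inclusion ``average-and-reconnect'' modification of a standard cut-off can be performed without inflating the gradient norm. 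This is the direct analogue in our setting of the construction carried out by Braides and Garroni in \cite[Proposition~3.3]{BrGa}.
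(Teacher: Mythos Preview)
Your proof is correct and shares with the paper the key non-standard ingredient: the construction of cut-offs $\varphi$ that are \emph{flat on every $\e$-inclusion}, i.e.\ $\nabla\varphi\equiv 0$ on $\Omega\setminus\e P$, with $\|\nabla\varphi\|_\infty$ bounded independently of $\e$. Both you and the paper achieve this by the same ``average-and-reconnect'' modification of a standard cut-off, following Braides--Garroni.

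Where you differ is in the surrounding architecture. You run the classical multi-layer (``tubes'') argument: $N$ nested cut-offs, Young's inequality on each transition shell, and a pigeonhole step to absorb the shell contribution into the $\eta$-error. The paper instead uses a \emph{single} cut-off $\varphi=\mathcal R^\e(\phi)$ between intermediate sets $W'\subset\subset W''$, writes $F_\e(w,U'\cup V)=F_\e(u,U')+F_\e^*(v,V\setminus U'')+F_\e^*(w,S)$, and bounds the last term directly by convexity:
\[
|\varphi\nabla u+(1-\varphi)\nabla v+(u-v)\nabla\varphi|^2\leqslant \tfrac{1}{1-\eta}|\varphi\nabla u+(1-\varphi)\nabla v|^2+\tfrac{1}{\eta}|\nabla\varphi|^2|u-v|^2,
\]
which, since $\nabla\varphi$ vanishes off $\e P$, yields $\tfrac{1}{1-\eta}\bigl(F_\e^*(u,S)+F_\e^*(v,S)\bigr)+\tfrac{C}{\eta}\|u-v\|^2_{L^2(S\cap\e P)}$ in one stroke. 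The paper's route is shorter and avoids the layer bookkeeping; your route is the textbook one and makes the dependence $M(\eta)\sim \eta^{-2}\delta^{-2}$ explicit. Both yield the same estimate (with the $L^2$-term squared, as you correctly note).
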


\begin{proof}
Let $0<\eta<1$, $U'$, $U''$, and $V\in\mathcal{A}(\Omega)$ be as in the statement. Let $\delta>0$ be small enough so that $Q_{\frac{1}2+\delta}\subset \subset Q$, and let $\psi\in C_0^\infty(Q)$ be a cut-off function between $Q_{\frac12}$ and $Q_{\frac{1}2+\delta}$, namely a function such that $0\leqslant \psi\leqslant 1$, $\psi \equiv 1$ on $Q_{\frac{1}{2}}$, and $\supp \psi\subset  Q_{\frac{1}2+\delta}$.  

For $\e>0$ and $i\in \Z^n$, we define the operator $R_{i}^{\varepsilon} : W_{\textrm{loc}}^{1,\infty}(\mathbb{R}^{n}) \rightarrow W_{\textrm{loc}}^{1,\infty}(\mathbb{R}^{n})$ as
	\begin{equation*}
		R_{i}^{\varepsilon}(\phi)(x) 
		:= \left(1-\psi\left(\dfrac{x}{\varepsilon} - i \right)\right)\phi(x)
		+ \psi\left(\dfrac{x}{\varepsilon} - i \right)\avint_{\varepsilon i +\varepsilon Q_{\frac{1}2+\delta}}
		\phi(y) \ dy.
	\end{equation*}
It is easy to see that $R_{i}^{\varepsilon}(\phi)(x) = \phi(x)$ if $x\notin \varepsilon i +\varepsilon Q_{\frac{1}2+\delta}$, and $R_{i}^{\varepsilon}(w)$ is instead constant in $\varepsilon i +\varepsilon Q_{\frac{1}2}$. 
Finally, we define the operator $\mathcal R^{\varepsilon} : W_{\textrm{loc}}^{1,\infty}(\mathbb{R}^{n}) \rightarrow W_{\textrm{loc}}^{1,\infty}(\mathbb{R}^{n})$ as
	\begin{equation*}
		\mathcal R^{\varepsilon}(\phi)(x):=
		\begin{cases}
		\smallskip
		R_{i}^{\varepsilon}(\phi)(x) & \quad \textrm{if } x\in \varepsilon i +\varepsilon Q_{\frac{1}2+\delta},\; i\in \Z^n,\\
		\phi(x) & \quad \textrm{otherwise}.
		\end{cases}
	\end{equation*}
Let $W',W'' \in \mathcal{A}(\Omega)$ be such that $U' \subset\subset W' \subset\subset W'' \subset\subset U''$.
Let $\phi$ be a cut-off function between the sets $W'$ and $W''$ and set $\varphi:= \mathcal{R}^{\varepsilon}(\phi)$; then $\varphi$ is a cut-off between the sets $U'$ and $U''$, provided $\delta$ is small enough. Moreover, by construction, $D \varphi = 0$ on $\Omega\setminus \e P$ and 
$\|D \varphi\|_{L^\infty(\Om)} \leq C \|D \phi\|_{L^\infty(\Om)}$, with a uniform 
constant independent of $\e$ (see e.g., \cite[Remark 2.7]{BrGa}).

Let $u$ and $v$ be as in the statement and let $w:= \varphi u + (1-\varphi)v$; clearly, $w\in SBV^{2} (U' \cup V)$. Then
\begin{equation}
		F_{\varepsilon}(w,U' \cup V)
		= F_{\varepsilon}(u,U')
		+ F_{\varepsilon}^{*}(v,V \setminus U'')
		+ F_{\varepsilon}^{*}(w,S),
		\label{FE1}
	\end{equation}
where, for fixed $w\in L^1(\Omega)$, $F_{\varepsilon}^{*}(w,\cdot)$ denotes the measure that extends $F_{\varepsilon}(w,\cdot)$ to the 
$\sigma$-algebra $\mathcal{B}(\Omega)$ of Borel subsets of $\Omega$, and is defined as
	\begin{equation*}
		F_{\varepsilon}^{*}(w,B) 
		:= \inf\{ F_\e(w, U): U\in \mathcal{A}(\Omega), B\subset U\},
	\end{equation*}
for every $B \in \mathcal{B}(\Omega)$. Now we estimate the last term in the right-hand side of \eqref{FE1}. For any fixed $\eta \in (0,1)$ by convexity and by the definition of $\varphi$ we have
	\begin{align*}
		F_{\varepsilon}^{*}(w,S)
		&\leqslant \int_{S \cap \varepsilon P} \Big|(1-\eta)\dfrac{\varphi\nabla u + (1-\varphi)\nabla v}{1-\eta} + \eta \dfrac{D \varphi(u-v)}{\eta}\Big|^{2} \ dx \\
		&+ \int_{S \setminus \varepsilon P} \Big|(1-\eta)\dfrac{\varphi\nabla u + (1-\varphi)\nabla v}{1-\eta}\Big|^{2} \ dx \\
		&+ \mathcal{H}^{n-1}(S_{u} \cap(S\cap \e P)) + \mathcal{H}^{n-1}(S_{v} \cap (S\cap \e P)) \\
		&+\beta_\e \mathcal{H}^{n-1}(S_{u} \cap (S\setminus \e P)) + \beta_\e\mathcal{H}^{n-1}(S_{v} \cap (S\setminus \e P)) \\
		&\leqslant \dfrac{1}{1-\eta}\left(\int_{S} |\nabla u|^{2} \ dx + \int_{S} |\nabla v|^{2} \dx\right) + \dfrac{1}{\eta}\int_{S \cap \varepsilon P} |D \varphi|^{2}|u-v|^{2} \dx \\
		&+ \mathcal{H}^{n-1}(S_{u} \cap(S\cap \e P)) + \mathcal{H}^{n-1}(S_{v} \cap (S\cap \e P)) \\
		&+\beta_\e \mathcal{H}^{n-1}(S_{u} \cap (S\setminus \e P)) + \beta_\e\mathcal{H}^{n-1}(S_{v} \cap (S\setminus \e P)) \\
		&\leqslant\dfrac{1}{1-\eta}\left(F_{\varepsilon}^{*}(u,S) + (F_{\varepsilon}^{*}(v,S)\right) + \dfrac{1}{\eta} \|D\varphi\|^2_{L^\infty(\Om)}\int_{S\cap \varepsilon P} |u - v|^{2} \ dx\\
		&\leqslant\dfrac{1}{1-\eta}\left(F_{\varepsilon}^{*}(u,S) + (F_{\varepsilon}^{*}(v,S)\right) + \dfrac{C}{\eta} \int_{S\cap \varepsilon P} |u - v|^{2} \ dx.
	\end{align*}
This, together with  \eqref{FE1}, concludes the proof.
\end{proof}

\begin{theo}[$\Gamma$-convergence and properties of the $\Gamma$-limit]\label{Gamma-compactness}
Let $(\e_k)$ be the sequence for which $F'_-=F''_-$. Then 
$F^\ell_{\rm{hom}}$ defined in \eqref{Def:astratta-F} satisfies the following properties:

\begin{itemize}

\item [i)] {\rm (locality and lower semicontinuity)} for every $U\in\Am(\Om)$, the functional $F^\ell_{\rm{hom}}(\cdot,U)$ is local and lower semicontinuous with respect to the strong $L^1(\Om)$-topology; 

\item [ii)]  {\rm (measure property)} for every $u\in GSBV^2(\Om)$, the set function $F^\ell_{\rm{hom}}(u,\cdot)$ is the restriction to $\Am(\Om)$ of a Radon measure on $\Om$;

\item [iii)]  {\rm ($\Gamma$-convergence)} for every $U\in\Am(\Om)$
$$
F^\ell_{\rm{hom}}(\cdot, U)=F'(\cdot,U)=F''(\cdot, U)\quad \text{on\; $GSBV^2(\Om)$;}
$$

\item [iv)]  {\rm (translational invariance in $u$)} 
		for every $u \in L^{1}(\Omega)$ and  $U \in \mathcal{A}(\Omega)$
			\begin{equation*}
				F^\ell_{\rm{hom}}(u+s,U) = F^\ell_{\rm{hom}}(u,U) \text{ for every } s \in \mathbb{R};
			\end{equation*}	
\item [v)]  {\rm (translational invariance in $x$)} 
		for every $u \in L^{1}(\Omega)$ and  $U \in \mathcal{A}(\Omega)$
			\begin{equation*}
				F^\ell_{\rm{hom}}(u(\cdot-y),U + y) = F^\ell_{\rm{hom}}(u,U) \text{ for every } y \in \mathbb{R}^{n} \text{ such that } U+y \subset\subset \Omega.
			\end{equation*}

		\end{itemize}
\end{theo}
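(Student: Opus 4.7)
The plan is to verify the five properties of $F^\ell_{\rm hom}$ by combining three ingredients: the properties already recorded for the $\Gamma$-liminf/limsup and their inner regular envelopes, the fundamental estimate Lemma \ref{lemma:FE}, and the De Giorgi--Letta criterion for the measure property.

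Properties (i), (iv), (v) follow from inheritance. Locality and lower semicontinuity in (i) are already stated in Remark \ref{pro-inner-reg-env}. Translation invariance in $u$ in (iv) passes from property (4) of Remark \ref{T3} through $F'$, $F''$ and then through the inner regular envelopes. For (v) I would use a standard periodicity argument: given $y \in \mathbb{R}^n$ with $U + y \subset\subset \Omega$, pick $z_k \in \mathbb{Z}^n$ so that $y_k := \varepsilon_k z_k \to y$; for any $(v_k)$ realising $F^\ell_{\rm hom}(u, U)$, the shifted sequence $v_k(\cdot - y_k)$ converges to $u(\cdot - y)$ on $U + y$ in the sense of Definition \ref{def:conv}, and since $\varepsilon_k P + y_k = \varepsilon_k P$ by $\mathbb{Z}^n$-periodicity of $P$, it satisfies $F_{\varepsilon_k}(v_k(\cdot - y_k), U + y_k) = F_{\varepsilon_k}(v_k, U)$; taking the liminf and arguing by symmetry yields the claim.

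The main step is the measure property (ii). Fix $u \in GSBV^2(\Omega)$; the set function $F^\ell_{\rm hom}(u, \cdot)$ is monotone, inner regular, and superadditive by construction (Remark \ref{pro-inner-reg-env}). The one nontrivial property is subadditivity, for which Lemma \ref{lemma:FE} is the crucial tool. Given $U', V \in \Am(\Omega)$ and $U'' \supset\supset U'$, I may assume $F''(u, U'') + F''(u, V) < +\infty$. A truncation at level $m$ (legal by Remark \ref{T3}(5) and Remark \ref{conv-deb-troncata}(iii)) yields $L^\infty$-bounded recovery sequences $(u_k), (v_k)$ converging to $u^m$ in the sense of Definition \ref{def:conv} and attaining $F''(u^m, U'')$ and $F''(u^m, V)$. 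By Remark \ref{conv-deb-troncata}(i) both tend to $u^m$ in $L^1(\Omega \cap \varepsilon_k P)$, and the uniform $L^\infty$ bound upgrades this to $\|u_k - v_k\|_{L^2(S \cap \varepsilon_k P)} \to 0$ with $S = (U'' \setminus U') \cap V$. Lemma \ref{lemma:FE} then produces, for each $\eta > 0$, glued functions $w_k$ converging to $u^m$ on $U' \cup V$ with
\begin{equation*}
F_{\varepsilon_k}(w_k, U' \cup V) \leqslant (1+\eta)\bigl(F_{\varepsilon_k}(u_k, U'') + F_{\varepsilon_k}(v_k, V)\bigr) + o(1).
\end{equation*}
Taking the liminf and then letting $\eta \to 0$ gives $F''(u^m, U' \cup V) \leqslant F''(u^m, U'') + F''(u^m, V)$; passing to the inner regular envelope and then to $m \to \infty$ (using the lower semicontinuity from (i)) yields subadditivity of $F^\ell_{\rm hom}(u, \cdot)$, and the De Giorgi--Letta criterion delivers (ii).

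Finally, (iii) follows from the chain $F^\ell_{\rm hom} = F'_- \leqslant F' \leqslant F''$ in \eqref{trivial}: it only remains to prove $F'' \leqslant F^\ell_{\rm hom}$ on $GSBV^2(\Omega)$. The subadditivity estimate just derived applies to $F''$ itself, so $F''(u, \cdot)$ is subadditive on open sets; combined with the fact that $F^\ell_{\rm hom}(u, \cdot)$ is a Radon measure (hence inner regular) and that $F^\ell_{\rm hom} \leqslant F''$, a standard argument (exhausting $U$ by sets $U' \subset\subset U$ and using the fundamental estimate near $\partial U$) gives $F''(u, U) = F^\ell_{\rm hom}(u, U)$. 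The expected obstacle throughout is precisely the nonstandard form of the fundamental estimate, whose error $\|u - v\|_{L^2(S \cap \varepsilon P)}$ is \emph{localised on the matrix} so as to be infinitesimal under the weaker convergence of Definition \ref{def:conv}; verifying that this error is indeed $o(1)$ forces the truncation step and the observations of Remark \ref{conv-deb-troncata}, without which the weaker topology would break the standard $\Gamma$-convergence machinery.
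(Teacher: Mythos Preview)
Your proposal is correct and follows essentially the same route as the paper: properties (i), (iv), (v) by inheritance; (ii) via De Giorgi--Letta, with subadditivity obtained from the fundamental estimate after truncating to force the $L^2(S\cap\varepsilon P)$ error to vanish; and (iii) by showing inner regularity of $F''$ from the same weak-subadditivity estimate together with the upper bound $F_\varepsilon\leqslant MS$ to control the boundary layer. Two minor slips to clean up: the estimate you derive for $F''$ is only the \emph{weak} subadditivity $F''(u^m,U'\cup V)\leqslant F''(u^m,U'')+F''(u^m,V)$ with $U'\subset\subset U''$ (not full subadditivity), and in that step you should take the $\limsup$, not the $\liminf$; neither affects the argument.
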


\begin{proof}
Property i) is a straightforward consequence of Remark \ref{pro-inner-reg-env}, since lower semicontinuity with respect to the convergence in Definition \ref{def:conv} implies lower semicontinuity in $L^1(\Om)$. Property ii) follows by the measure-property criterion of the De Giorgi and Letta (see \cite[Theorem 14.23]{DM}) once we show that for every $u\in GSBV^2(\Om)$ the set function $F^\ell_{\rm{hom}}(u,\cdot)$ is subadditive. The proof of the subadditivity of $F^\ell_{\rm{hom}}(u,\cdot)$ follows from the fundamental estimate Lemma \ref{lemma:FE}. In our case the main difference with respect to a standard situation is that the reminder in \eqref{FE} is given in terms of $\|u-v\|_{L^2(S\cap \e P)}$, while we are studying the $\Gamma$-convergence of $F_\e$ with respect to the convergence in Definition \ref{def:conv} which only ensures that $\|u-v\|_{L^1(S\cap \e P)}$ tends to zero. Therefore we provide a detailed proof of the subadditivity of $F^\ell_{\rm{hom}}(u,\cdot)$.     

We start observing that on $GSBV^2(\Om)\cap L^\infty(\Om)$ the equality $F'_-=F''_-$ in particular implies that the following limsup-type inequality is satisfied: for every $u\in GSBV^2(\Om)\cap L^\infty(\Om)$ and for every $U,U'\in\Am(\Om)$ with $U'\subset\subset U$, there exists a sequence $(u_k)\subset GSBV^2(U')\cap L^1(\Om)$ with $u_k\to u$ in the sense of Definition \ref{def:conv} such that
$$
\limsup_{k\to +\infty}F_{\e_k}(u_k,U') \leqslant F^\ell_{\rm{hom}}(u,U)
$$
(see e.g. \cite[Proposition 16.4 and Remark 16.5]{DM}, since both the infima in the definition of $F'$ and $F''$ are actually minima).

Now let $U,V\in\Am(\Om)$ and let $u\in GSBV^2(\Om)\cap L^\infty(\Om)$. Fix any $U'\subset\subset U$, $V'\subset\subset V$, $U',V'\in \Am(\Om)$. Choose an open set $U''$ such that $U'\subset\subset U''\subset\subset U$ and two sequences $(u_k)\subset GSBV^2(U'')\cap L^1(\Om)$ and $(v_k)\subset GSBV^2(V')\cap L^1(\Om)$, with $u_k\to u$ and $v_k\to u$ in the sense of Definition \ref{def:conv}, and such that
$$
\limsup_{k\to +\infty}F_{\e_k}(u_k,U'')\leqslant F^\ell_{\rm{hom}}(u,U), \quad \limsup_{k\to +\infty}F_{\e_k}(v_k,V')\leqslant F^\ell_{\rm{hom}}(u,V).
$$
Moreover, since the functionals $F_{\e_k}$ decrease by truncation, we can additionally assume that $\|\tilde u_k\|_{L^\infty(\Om)}\leqslant \|u\|_{L^\infty(\Om)}$, $\|\tilde v_k\|_{L^\infty(\Om)}\leqslant \|u\|_{L^\infty(\Om)}$, where $\tilde u_k$ and $\tilde v_k$ are the extended sequences in Definition \ref{def:conv}. In particular $\tilde u_k\to u$ in $L^2(\Om)$ and $\tilde v_k\to u$ in $L^2(\Om)$.

Let $\eta>0$ be arbitrary but fixed. Then, the fundamental estimate Lemma \ref{lemma:FE} provided us with a constant $M(\eta)>0$ and a sequence $(\varphi_k)$ of cut-off functions between $U'$ and $U''$ such that
\begin{eqnarray*}
&F_{\e_k}(\varphi_k u_k+(1-\varphi_k)v_k, U'\cup V')\\& \leqslant (1+\eta)\Big(F_\e(u_k,U'')+F_{\e_k}(v_k,V')\Big)+M(\eta)\|u_k-v_k\|^2_{L^2(\Om\cap \e P)}.
\end{eqnarray*}
Hence, taking the limit as $k\to+\infty$, and noticing that $\varphi_k u_k+(1-\varphi_k)v_k\to u$ in the sense of Definition \ref{def:conv}, we get

$$
F^\ell_{\rm{hom}}(u,U'\cup V')\leqslant (1+\eta)\Big(F^\ell_{\rm{hom}}(u,U)+F^\ell_{\rm{hom}}(u,V)\Big).
$$
Now letting $\eta\to 0$, and then $U'\nearrow U$, $V'\nearrow V$ in view of the inner-regularity of $F^\ell_{\rm{hom}}$ we get
$$
F^\ell_{\rm{hom}}(u,U\cup V)\leqslant F^\ell_{\rm{hom}}(u,U)+F^\ell_{\rm{hom}}(u,V),
$$
hence the subadditivity of $F^\ell_{\rm{hom}}(u,\cdot)$ for $u\in GSBV^2(\Om)\cap L^\infty(\Om)$.

Now let $u\in GSBV^2(\Om)$ and, for every $m\in\N$, set $u^m:=(u\wedge m)\vee (-m)$. Since $u^m\in L^\infty(\Om)$ and $F^\ell_{\rm{hom}}$ decreases by truncation (see Remark \ref{pro-inner-reg-env}), we have
$$
F^\ell_{\rm{hom}}(u^m, U\cup V)\leqslant F^\ell_{\rm{hom}}(u^m, U)+F^\ell_{\rm{hom}}(u^m, V) \leqslant F^\ell_{\rm{hom}}(u, U)+F^\ell_{\rm{hom}}(u, V).
$$
On the other hand, since $u^m\to u$ in $L^1(\Om)$, the lower semicontinuity of $F^\ell_{\rm{hom}}$ yields 
$$
F^\ell_{\rm{hom}}(u, U\cup V)\leqslant \liminf_{m\to +\infty}F^\ell_{\rm{hom}}(u^m, U\cup V)\leqslant F^\ell_{\rm{hom}}(u, U)+F^\ell_{\rm{hom}}(u, V),
$$
thus the subadditivity of $F^\ell_{\rm{hom}}(u,\cdot)$ for every $u\in GSBV^2(\Om)$.

We now turn to the proof of iii). This will be achieved by showing that $F''$ is inner-regular. As a consequence we will have $F''=F''_-=F'_-=F^\ell_{\rm{hom}}\leqslant F'$ and therefore the $\Gamma$-convergence of $F_{\e_k}$ to $F^\ell_{\rm{hom}}$. 

The inner regularity of $F''$ is again a consequence of the fundamental estimate Lemma \ref{lemma:FE}. In fact, let $MS$ be as in \eqref{MS1} and fix $W\in \Am(\Om)$. Let $u\in GSBV^2(\Om)$; since $MS(u,\cdot)$ is (the restriction of) a Radon measure on $\Am(\Om)$, for every $\eta>0$ there exists a compact set $K\subset W$ such that and $MS(u, W\setminus K)<\eta$.

Now choose $U, U'\in \Am(\Om)$ satisfying 
$K\subset U'\subset \subset U \subset \subset W$
and set $V:=W\setminus K$. Recalling that $F''(u, \cdot)$ is increasing, Lemma \ref{lemma:FE} easily yields
\begin{eqnarray*}
F''(u, W) \leqslant F''(u, U'\cup V)\leqslant F''(u,U)+F''(u,V) =F''(u,U)+ F''(u,W\setminus K).
\end{eqnarray*}
Moreover, by the definition of $F''_-$ and by the trivial estimate $F_\e \leqslant MS$ we have
\begin{eqnarray*}
F''(u, W) \leqslant F''_-(u,W)+MS(u,W\setminus K)
\leqslant F''_-(u,W)+ \eta.
\end{eqnarray*}
Hence by the arbitrariness of $\eta$ we get 
$$
F''(u, W) \leqslant F''_-(u,W)\quad \text{for every}\; W\in\Am(\Om), \; u\in GSBV^2(\Om).
$$
Therefore, as the opposite inequality is trivial, we deduce that $F''(u,\cdot)$ is inner regular. 

Finally, the proof of iv) and v) is standard and follows as in e.g. \cite[Lemma 3.7]{BDfV}. 
\end{proof}

\subsection{Integral representation of the $\Gamma$-limit} In this subsection we show that the $\Gamma$-limit $F^\ell_{\rm{hom}}$ 
can be represented in an integral form.

\begin{theo}[Integral representation]\label{t:int-rep}
Let $F^\ell_{\rm{hom}}$ be the functional whose existence is established in Theorem \ref{Gamma-compactness}. Then, for every $u\in GSBV^2(\Om)$ and every $U\in \Am(\Om)$ we have
\begin{equation}\label{s:int-rep}
F^\ell_{\rm{hom}}(u,U)=\int_U f^\ell_{\rm{hom}}(\nabla u)\dx +\int_{S_u\cap U}g^\ell_{\rm{hom}}([u],\nu_u)\, d\mathcal H^{n-1} 
\end{equation}
for a convex function $f^\ell_{\rm{hom}} \colon \R^n \longrightarrow [0,+\infty)$ satisfying for every $\ell \in [0,+\infty]$ and every $\xi \in \R^n$ the following bounds
\begin{equation}\label{c:bd-on-fhom}
\hat f(\xi) \leqslant f^\ell_{\rm{hom}}(\xi) \leqslant |\xi|^2
\end{equation}
and a Borel function $g^\ell_{\rm{hom}} \colon \R\times \Sph^{n-1} \longrightarrow [0,+\infty)$ satisfying:

 i) {\rm (monotonicity in $t$ and symmetry)} for any fixed $\nu\in \Sph^{n-1}$, $g^\ell_{\rm{hom}} (\cdot,\nu)$ is nondecreasing on $(0,+\infty)$ and satisfies the symmetry condition $g^\ell_{\rm{hom}}(-t,-\nu)=g^\ell_{\rm{hom}}(t,\nu)$ for $t\in \R$;

ii) {\rm (subadditivity in $t$)} for any $\nu\in \Sph^{n-1}$
$$
g^\ell_{\rm{hom}}(t_1+t_2,\nu)\leqslant g^\ell_{\rm{hom}}(t_1,\nu)+ g^\ell_{\rm{hom}}(t_2,\nu), 
$$
for every $t_1,t_2\in\R$; 

iii)  {\rm (convexity in $\nu$)} for any $t\in\R$, the $1$-homogeneous extension of $g^\ell_{\rm{hom}}(t,\cdot)\colon \Sph^{n-1}\longrightarrow [0,+\infty)$ to $\R^n$ is convex. This condition can be also equivalently expressed in terms of the function $g^\ell_{\rm{hom}}$ as
$$
g^\ell_{\rm{hom}}(t,\nu)\leqslant \lambda_1 g^\ell_{\rm{hom}}(t,\nu_1)+
\lambda_2 g^\ell_{\rm{hom}}(t,\nu_2),
$$
for every $\nu,\nu_1,\nu_2\in \Sph^{n-1}$, $\lambda_1,\lambda_2\geqslant 0$ such that $\lambda_1 \nu_1+\lambda_2 \nu_2=\nu$.
 \end{theo}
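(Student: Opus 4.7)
The plan is to apply an abstract integral representation theorem for free-discontinuity functionals, such as the one in \cite{CDMSZ} modelled on the global method of Bouchitté-Fonseca-Mascarenhas, to the $\Gamma$-limit $F^\ell_{\rm{hom}}$. By Theorem \ref{Gamma-compactness}, the functional $F^\ell_{\rm{hom}}(u,\cdot)$ is a Radon measure in $U$, $F^\ell_{\rm{hom}}(\cdot,U)$ is local and lower semicontinuous with respect to the strong $L^1(\Omega)$-convergence, and $F^\ell_{\rm{hom}}$ is invariant under translations in both $u$ and $x$. A two-sided Mumford-Shah-type bound, which is the growth condition required by the representation theorem, follows by passing to the limit in the trivial inequality $\widehat{F}_\varepsilon \leqslant F_\varepsilon \leqslant MS$: since $\widehat{F}_\varepsilon$ $\Gamma$-converges to $\widehat{F}$ with respect to a topology which, by Remark \ref{rem:convBS}, is weaker than the convergence in Definition \ref{def:conv}, and $MS$ is $L^1$-lower semicontinuous, we obtain
\[
\widehat{F}(u,U) \leqslant F^\ell_{\rm{hom}}(u,U) \leqslant MS(u,U)
\]
for every $u\in GSBV^2(\Omega)$ and $U\in\mathcal{A}(\Omega)$. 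Combined with \eqref{B1} and \eqref{B2}, this yields the required growth estimate, and the integral representation theorem then produces a Borel volume integrand $f^\ell_{\rm{hom}}$ and a Borel surface integrand $g^\ell_{\rm{hom}}$ so that \eqref{s:int-rep} holds.

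The bounds \eqref{c:bd-on-fhom} on $f^\ell_{\rm{hom}}$ are then read off by evaluating the integral representation on affine functions $u_\xi(x) = \xi\cdot x$ over a small cube $Q_r(x_0)\subset\subset\Omega$ and dividing by $\mathcal{L}^n(Q_r(x_0))$: the lower bound $\hat{f}(\xi) \leqslant f^\ell_{\rm{hom}}(\xi)$ is inherited from $\widehat{F}\leqslant F^\ell_{\rm{hom}}$, while the upper bound $f^\ell_{\rm{hom}}(\xi)\leqslant |\xi|^2$ comes from $F^\ell_{\rm{hom}}\leqslant MS$. Convexity of $f^\ell_{\rm{hom}}$ then follows from the $L^1$-lower semicontinuity of the volume part via a standard Jensen-type argument, tested against $W^{1,\infty}$ perturbations of the form $\xi\cdot x + \delta\, \phi(x/\delta)$ with $\phi$ periodic.

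The properties of $g^\ell_{\rm{hom}}$ are all consequences of the corresponding ones at the level of $F^\ell_{\rm{hom}}$. The symmetry $g^\ell_{\rm{hom}}(-t,-\nu) = g^\ell_{\rm{hom}}(t,\nu)$ simply reflects that the pair $([u],\nu_u)$ is defined only up to a simultaneous change of sign. Monotonicity in $t$ for fixed $\nu$ is obtained by comparing the representation formula on a small cube $Q^\nu_r(x_0)$ centred at a jump point with jumps of different heights sharing the same normal, using that on such a cube the measure associated to $F^\ell_{\rm{hom}}$ decreases by truncation. Subadditivity in $t$ follows by constructing a competitor made of two close parallel jumps of heights $t_1$ and $t_2$ with common normal $\nu$, and passing to the limit as the distance between the hyperplanes tends to zero by the measure property and lower semicontinuity. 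Convexity of the $1$-homogeneous extension of $g^\ell_{\rm{hom}}(t,\cdot)$ is obtained by a lamination argument: given $\nu = \lambda_1\nu_1+\lambda_2\nu_2\in\Sph^{n-1}$, one builds piecewise constant competitors whose jump set is a fine lamination with interfaces of normals $\nu_1$ and $\nu_2$ in proportions $\lambda_1$ and $\lambda_2$, and compares the limit of the corresponding surface energies with the single-interface configuration of normal $\nu$. The main subtlety throughout is that our $\Gamma$-convergence is taken with respect to the weaker convergence of Definition \ref{def:conv}; this is bypassed by the fact that $F^\ell_{\rm{hom}}$ already satisfies all the abstract properties listed in Theorem \ref{Gamma-compactness} and admits the two-sided bound above, so that the representation theorem can be applied to it as a standalone functional on $GSBV^2(\Omega)\times\mathcal{A}(\Omega)$, without further reference to the convergence used to define it.
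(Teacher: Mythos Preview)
Your proposal is correct and follows essentially the same route as the paper: derive the two-sided bound $\widehat F \leqslant F^\ell_{\rm hom} \leqslant MS$ by passing to the $\Gamma$-limit in $\widehat F_\varepsilon \leqslant F_\varepsilon \leqslant MS$ (using Remark~\ref{rem:convBS} for the lower bound), combine with \eqref{B1}--\eqref{B2} to get the Mumford-Shah-type growth, and then feed the properties from Theorem~\ref{Gamma-compactness} into an abstract representation theorem. The paper invokes \cite[Theorem~1]{BFLM} together with a perturbation argument \cite[Theorem~2.2]{DMZ} (needed because the surface bound $c_2 \leqslant g \leqslant 1$ carries no $|[u]|$-growth, which the BFLM hypotheses require), and then attributes convexity of $f^\ell_{\rm hom}$, subadditivity in $t$, and convexity in $\nu$ directly to $L^1$-lower semicontinuity, and monotonicity/symmetry to \cite{BFLM}; your more explicit sketches for these properties are fine but ultimately encode the same mechanisms.
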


\begin{proof}
We recall that $F_\e$ satisfies the bound \eqref{BB1}, namely 
$
\widehat F_\e \leqslant F_\e \leqslant MS
$
in $L^1(\Om)$. 
Therefore by $\Gamma$-convergence and in view of Remark \ref{rem:convBS}  and Remark \ref{conv-deb-troncata} ii.  we get
\begin{equation}\label{c:bd-mancante}
\widehat F \leqslant F^\ell_{\rm{hom}}\leqslant MS \quad \textrm{in } L^1(\Om),
\end{equation}
thus invoking $\eqref{BB2}$ we deduce 
\begin{equation}\label{bounds-MS-type}
\min\{c_1,c_2\}\,MS \leqslant F^\ell_{\rm{hom}}\leqslant MS \quad \textrm{in } L^1(\Om),
\end{equation}
where $c_1$ and $c_2$ are as in \eqref{B1} and \eqref{B2}, respectively. 
Hence, Theorem \ref{Gamma-compactness}, \eqref{bounds-MS-type}, \cite[Theorem 1]{BFLM}, and a standard perturbation argument (see e.g. \cite[Theorem 2.2]{DMZ}) yield the integral representation \eqref{s:int-rep} on the space $SBV^2(\Om)$. Then a standard truncation and continuity argument allows to extend this integral representation to the whole space $GSBV^2(\Om)$ and thus to get exactly \eqref{s:int-rep}. 

The convexity of $f^\ell_{\rm{hom}}$, the subadditivity of $g^\ell_{\rm{hom}}$ in $t$ and the convexity in $\nu$ of its $1$-homogeneous extension to $\R^n$ are straightforward consequences of the $L^1(\Om)$-lower semicontinuity of $F^\ell_{\rm{hom}}$. 
Since moreover $f^\ell_{\rm hom}(\xi)=F^\ell_{\rm hom}(u_\xi, Q)$ where $u_\xi(x):=\xi \cdot x$, the bounds in \eqref{c:bd-on-fhom} are an immediate consequence of \eqref{c:bd-mancante}. 
Finally, the monotonicity in $t$ and the symmetry of $g^\ell_{\rm{hom}}$ follow from \cite[Theorem 1]{BFLM}. 
\end{proof}

%%%%%%%%%%%%%%%%%%%%%%%%%%   surface density   %%%%%%%%%%%%%%%%%%%%%%%

\section{Identification of the homogenized  surface integrand}\label{sect:g}
 In this section we identify the limit surface integrand $g^\ell_{\rm{hom}}$. To do so we make use of the following technical lemma (see \cite[Lemma 4.5]{TC1}, and see also \cite[Lemma 2.5]{TC2} for a more general version of the result).

\begin{lemma}[``Fracture Lemma'']
	Let $n \geqslant 2$ and $\eta \in (0,1]$ be fixed. There exists a constant $\gamma=\gamma(n,\eta)>0$ such that if $0<s\leqslant r$, and $u\in \mathcal{P}(B_{r,r+s})$ verify the following hypotheses: 
	\begin{enumerate}
		\item[$(H1)$]$\displaystyle\mathcal{H}^{n-1}(S_{u} \cap B_{r,r+s}) 
		\leqslant \mathcal{H}^{n-1}(S_{v} \cap B_{r,r+s})$ 
		for every competitor $v \in \mathcal{P}(B_{r,r+s})$ satisfying  $\supp(u-v) \subset B_{r,r+s}$; 
		\smallskip
		\item[$(H2)$]$\displaystyle \mathcal{H}^{n-1}(S_{u} \cap B_{r,r+s}) \leqslant \gamma s^{n-1}$; 
	\end{enumerate}
	then for every $r_{0}$ and $s_{0}$ such that $r \leqslant r_{0} < r_{0} + s_{0} \leqslant r+s$ and $s_{0} \geqslant \eta s$, there exists a radius $\bar{r} \in (r + s_{0}/3, r_{0} + 2 s_{0}/3)$ with the property that
	\begin{equation*}
	S_{u} \cap \partial B_{\bar{r}} = \emptyset.
	\end{equation*}
	\label{fracture-lemma}
\end{lemma}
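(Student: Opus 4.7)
The plan is to argue by contradiction using the density lower bound for local perimeter minimizers. Since $u\in\mathcal{P}(B_{r,r+s})$ takes only the values $0$ and $1$, I write $u=\chi_E$ for a Caccioppoli set $E\subset B_{r,r+s}$, and the jump set $S_u$ coincides, up to $\mathcal{H}^{n-1}$-negligible sets, with the reduced boundary $\partial^*E$. Hypothesis $(H1)$ then reads exactly as the statement that $E$ is a local minimizer of $\mathrm{Per}(\cdot,B_{r,r+s})$ among Caccioppoli sets differing from $E$ on a compact subset of the annulus. To make the conclusion $S_u\cap\partial B_{\bar r}=\emptyset$ pointwise meaningful I will work throughout with the closed representative $\overline{\partial^*E}$ of $S_u$.

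The key input is the classical density lower bound for such local perimeter minimizers: there exists a dimensional constant $c_n>0$ such that for every $x\in\overline{\partial^*E}\cap B_{r,r+s}$ and every $\rho\in(0,\mathrm{dist}(x,\partial B_{r,r+s}))$,
\[
\mathcal{H}^{n-1}(\partial^*E\cap B_\rho(x)) \;\geqslant\; c_n\,\rho^{n-1}.
\]
Fix any $\bar r\in (r+s_0/3,\, r_0+2s_0/3)$ and suppose, for a contradiction, that $S_u\cap \partial B_{\bar r}\neq\emptyset$, picking $x$ in this intersection. From $r\leqslant r_0$, $r_0+s_0\leqslant r+s$, $s_0\geqslant\eta s$, and $|x|=\bar r$, it follows that
\[
\mathrm{dist}(x,\partial B_{r,r+s}) \;=\; \min\!\bigl(\bar r - r,\, r+s-\bar r\bigr) \;\geqslant\; \tfrac{s_0}{3} \;\geqslant\; \tfrac{\eta s}{3}.
\]
Setting $\rho:=\eta s/3$ we then have $B_\rho(x)\subset B_{r,r+s}$, and the density lower bound gives
\[
\mathcal{H}^{n-1}(S_u\cap B_{r,r+s}) \;\geqslant\; \mathcal{H}^{n-1}(S_u\cap B_\rho(x)) \;\geqslant\; c_n\Bigl(\tfrac{\eta}{3}\Bigr)^{\!n-1} s^{n-1}.
\]
Choosing $\gamma=\gamma(n,\eta)< c_n(\eta/3)^{n-1}$ contradicts $(H2)$ and proves the lemma, in fact for \emph{every} $\bar r$ in the stated interval.

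The main technical point is the identification of a pointwise closed representative of $S_u$: a priori $S_u$ is only defined up to $\mathcal{H}^{n-1}$-null sets, so I take $\overline{\partial^*E}$ (equivalently $\mathrm{spt}\,\|D\chi_E\|$) throughout, for which the density estimate holds at every point and the conclusion $S_u\cap\partial B_{\bar r}=\emptyset$ is unambiguous. An alternative, more constructive route that avoids invoking the density estimate as a black box would combine: (i) the coarea formula on spheres, yielding a set of radii $\bar r$ of positive measure with $\mathcal{H}^{n-2}(S_u\cap \partial B_{\bar r})\leqslant 3\gamma s^{n-2}/\eta$; (ii) the relative isoperimetric inequality on $\partial B_{\bar r}$ (whose constant is scale-invariant in $\bar r$), producing a minority phase of small $\mathcal{H}^{n-1}$-area; and (iii) an explicit competitor $v\in \mathcal{P}(B_{r,r+s})$ that fills this minority phase radially in a thin shell bounded by two good slicing radii, leading to $\mathcal{H}^{n-1}(S_v)<\mathcal{H}^{n-1}(S_u)$ and hence to a contradiction with $(H1)$ for $\gamma$ sufficiently small.
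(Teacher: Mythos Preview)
Your argument is correct. The paper does not give its own proof of this lemma but cites \cite[Lemma~4.5]{TC1} (and \cite[Lemma~2.5]{TC2}); your approach via the density lower bound for local perimeter minimizers is precisely the mechanism underlying those results, and in fact you obtain the slightly stronger conclusion that $S_u\cap\partial B_{\bar r}=\emptyset$ for \emph{every} $\bar r$ in the stated interval, not merely for some $\bar r$.
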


The following theorem is the main result of this section. 

\begin{theo}[Identification of the homogenized  surface integrand]\label{t:g-hom-g-hat}
Let $g^\ell_{\rm hom}$ be the function as in Theorem \ref{t:int-rep} and let $\hat g$ be as in \eqref{G}. Then, for every $(t,\nu) \in (\R\setminus\{0\})\times \Sph^{n-1}$  and every $\ell \in [0,+\infty]$ we have $g^\ell_{{\rm hom}}(t,\nu)=\hat g(\nu)$. 
\end{theo}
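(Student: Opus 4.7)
\medskip

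\textbf{Lower bound.} The inequality $g^\ell_{\rm hom}(t,\nu) \geqslant \hat g(\nu)$ is immediate from the pointwise bound $F_\e \geqslant \widehat F_\e$ in \eqref{BB1}: since the convergence of Definition~\ref{def:conv} is stronger than the one of Remark~\ref{rem:convBS} (see Remark~\ref{conv-deb-troncata}~ii.), the $\Gamma$-convergence $\widehat F_\e \to \widehat F$ passes through and yields $F^\ell_{\rm hom} \geqslant \widehat F$, as already recorded in \eqref{c:bd-mancante}. Evaluating both functionals at $u_{0,t}^\nu$ on a unit cube $Q^\nu \subset \Om$ and invoking the integral representations \eqref{s:int-rep} and \eqref{def:Fhat}, together with $\hat f(0) = f^\ell_{\rm hom}(0) = 0$, gives $g^\ell_{\rm hom}(t,\nu) \geqslant \hat g(\nu)$.

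\medskip

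\textbf{Upper bound.} To prove $g^\ell_{\rm hom}(t,\nu) \leqslant \hat g(\nu)$ we construct a recovery sequence. Fix $\delta>0$ and pick, via \eqref{G}, $w_\e \in \mathcal P(Q^\nu \cap \e P)$ with $w_\e = u_{0,1}^\nu$ in a neighbourhood of $\partial Q^\nu$ and $\mathcal H^{n-1}(S_{w_\e} \cap Q^\nu \cap \e P) \leqslant \hat g(\nu) + \delta$; after replacing $w_\e$ by a local minimizer, condition~$(H1)$ of Lemma~\ref{fracture-lemma} holds on any ball of radius of order $\e$ contained in the matrix. The heart of the proof is the construction of an extension $\tilde w_\e$ of $w_\e$ to the whole $Q^\nu$ whose perimeter exceeds that of $w_\e$ by $o(1)$.

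\medskip

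For each $\e$-cell $Y_{\e,i} = \e i + \e Q \subset Q^\nu$, let $A_{\e,i}$ be an annular region in $Y_{\e,i}\cap\e P$ enclosing the inclusion $\e i + \e Q_{1/2}$, of width of order $\e$. We classify $Y_{\e,i}$ as \emph{good} if $\mathcal H^{n-1}(S_{w_\e} \cap A_{\e,i}) \leqslant \gamma\,\e^{n-1}$, with $\gamma$ the constant of Lemma~\ref{fracture-lemma}, and \emph{bad} otherwise. On a good cell, Lemma~\ref{fracture-lemma} produces a surface $\Sigma_i \subset A_{\e,i}$ disjoint from $S_{w_\e}$, on which $w_\e$ takes a single value $c_i \in \{0,1\}$; we set $\tilde w_\e \equiv c_i$ on the region enclosed by $\Sigma_i$ (thereby filling the inclusion) and $\tilde w_\e = w_\e$ outside. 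This creates no new jump and can only delete jumps of $w_\e$ inside $\Sigma_i$. On a bad cell, we extend $w_\e$ inside the inclusion so that its trace on $\partial(\e i + \e Q_{1/2})$ is matched (no jump is created on the inclusion boundary), using a radial/trace-matching construction that produces an inside jump in $\mathcal H^{n-1}$-measure bounded by $C\,\mathcal H^{n-1}(S_{w_\e} \cap A_{\e,i})$, entirely contained in the inclusion and therefore weighted by $\beta_\e$ in $F_\e$. Hence
\begin{equation*}
F_\e(t\tilde w_\e, Q^\nu) \;\leqslant\; \mathcal H^{n-1}(S_{w_\e} \cap Q^\nu \cap \e P) \;+\; C\beta_\e \sum_{i\,\text{bad}} \mathcal H^{n-1}(S_{w_\e} \cap A_{\e,i}) \;\leqslant\; \hat g(\nu) + \delta + O(\beta_\e).
\end{equation*}
Since $t\tilde w_\e$ differs from $u_{0,t}^\nu$ only in a slab around $\{x \cdot \nu = 0\}$ of $\mathcal L^n$-measure vanishing with $\e$ (up to the inclusions therein), we have $t\tilde w_\e \to u_{0,t}^\nu$ in $L^1(Q^\nu)$, hence a fortiori in the sense of Definition~\ref{def:conv}. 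The $\Gamma$-limsup inequality combined with the integral representation~\eqref{s:int-rep} then yields $g^\ell_{\rm hom}(t,\nu) \leqslant \hat g(\nu) + \delta$; letting $\delta \to 0$ concludes.

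\medskip

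\textbf{Main obstacle.} The technical core is the cell-by-cell extension described above: on good cells the Fracture Lemma gives a free constant extension, while on bad cells a trace-matching interior construction confines the extra jumps to the inclusions, with measure controlled by the ambient matrix perimeter. Summing over bad cells and using that their total matrix-jump contribution is bounded by $\hat g(\nu)+\delta$, the additional cost is $O(\beta_\e) = o(1)$, uniformly in $\ell \in [0,+\infty]$. This explains why $g^\ell_{\rm hom} = \hat g$ regardless of the scaling of $\beta_\e$ relative to $\e$.
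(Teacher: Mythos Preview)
Your lower bound is fine and matches the paper's.

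For the upper bound you attempt a direct recovery-sequence construction for $F_\e$, which is a different route from the paper. The paper instead introduces an auxiliary functional $G_\e^\delta$ with surface integrand $\chi_P + \delta(1+|[u]|)$, uses the sandwich $\widehat F_\e \leqslant F_\e \leqslant G_\e^\delta$ (for $\beta_\e < \delta$) to obtain $\hat g(\nu) \leqslant g^\ell_{\rm hom}(t,\nu) \leqslant g^\delta(t,\nu)$, and then shows $g^\delta(t,\nu) \leqslant \hat g(\nu) + o_\delta(1)$ by comparing \emph{cell formulas}: a minimiser $\bar u$ for the $\hat g$-cell problem on $tQ^\nu \cap P$ is extended to all of $tQ^\nu$ (via precisely the good/bad-cell Fracture-Lemma mechanism you sketch) and plugged in as a competitor for $g^\delta$. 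The extra perimeter created on bad and boundary cells is absorbed by the $\delta$-prefactor. Crucially, no convergence of the extended functions is ever required.

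Your direct approach does need $t\tilde w_\e \to u_{0,t}^\nu$, and here there is a real gap. The assertion that $\tilde w_\e$ differs from $u_{0,1}^\nu$ only on a slab of $\mathcal L^n$-measure vanishing with $\e$ is unjustified and in general false: a near-minimiser $w_\e$ of the $\hat g$-cell problem is only required to equal $u_{0,1}^\nu$ in a (fixed, not shrinking) neighbourhood of $\partial Q^\nu$ and to have small matrix-perimeter, which does not force its interface into a thin slab. By BV compactness a subsequence of $(\tilde w_\e)$ converges in $L^1$ to some $v \in \mathcal P(Q^\nu)$ with $v = u_{0,1}^\nu$ on $\partial Q^\nu$ and $\widehat F(v) \leqslant \hat g(\nu) + \delta$, but there is no uniqueness statement forcing $v = u_{0,1}^\nu$. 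The $\Gamma$-limsup inequality then only yields $F^\ell_{\rm hom}(tv) = \int_{S_v} g^\ell_{\rm hom}(t,\nu_v)\, d\mathcal H^{n-1} \leqslant \hat g(\nu) + \delta$, which is not the claimed bound on $g^\ell_{\rm hom}(t,\nu)$. The gap can be closed by additionally invoking property~iii) of Theorem~\ref{t:int-rep} (convexity of the $1$-homogeneous extension of $g^\ell_{\rm hom}(t,\cdot)$), which gives $\int_{S_v} g^\ell_{\rm hom}(t,\nu_v)\,d\mathcal H^{n-1} \geqslant g^\ell_{\rm hom}(t,\nu)$ for any such $v$; but this step is absent from your argument, and the paper's indirect route via $G_\e^\delta$ avoids the issue entirely.
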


\begin{proof}
For $\e,\delta>0$ we define the comparison functional $G_{\varepsilon}^{\delta}: L^1(\Om) \longrightarrow [0,+\infty]$ as
			\begin{equation*}
				G_{\varepsilon}^{\delta}(u) 
				:=  
				\begin{cases} 
					\displaystyle
					\int_{\Omega}|\nabla u|^{2}  dx 
					+ \mathcal{H}^{n-1}(S_{u} \cap \varepsilon P)
					+ \delta \int_{S_{u}}
					(1+|[u]|) \ d\mathcal{H}^{n-1}
					& \mbox{if} \ u \in SBV^{2}(\Omega), \\ 
					+\infty
					& \mbox{otherwise in } L^{1}(\Omega). 
				\end{cases}
			\end{equation*}	
Let $\delta>0$ be fixed; by \cite[Theorem 2.3]{BDfV} we deduce that, as $\e$ tends to zero, the functionals $G_\e^\delta$ $\Gamma$-converge, with respect to the $L^1(\Om)$-convergence, to the functional $G^{\delta}: L^{1}(\Omega) \rightarrow [0,+\infty]$ given by		
\begin{equation*}
			G^{\delta}(u) 
			:=  
			\begin{cases} 
			\displaystyle
			\int_{\Omega}|\nabla u|^2 \dx + 
			\int_{S_{u}}
			g^{\delta}([u],\nu_{u}) \ d\mathcal{H}^{n-1},
			& \mbox{if} \ u \in SBV^{2}(\Omega), \\ 
			+\infty,
			& \mbox{otherwise in } L^{1}(\Omega), 
			\end{cases}
		\end{equation*}	
	where $g^{\delta}: \R\times \Sph^{n-1} \to [0,+\infty]$ is defined as
			\begin{align}\label{g-delta-t}
			g^{\delta}(z,\nu) 
			:= \lim_{t \rightarrow + \infty} \dfrac{1}{t^{n-1}}
			\inf \bigg\{ 
			\int_{S_{u}\cap tQ^{\nu}} \big(
			\chi_{P}
			+ \delta (1+|[u]|)\big)
			d\mathcal{H}^{n-1} : \ 
			u \in SBV^{\rm pc}(tQ^{\nu}), \nonumber \\ 
			  u = u_{0,z}^\nu \text{ in a neighbourhood of } \partial (tQ^{\nu})
			\bigg\}.
		\end{align}
Let $\varepsilon>0$ be small enough to have that $\beta_{\varepsilon} < \delta$; then we immediately deduce the bounds
		\begin{equation}
			\widehat{F}_{\varepsilon}(u)
			\leqslant F_{\varepsilon}(u)
			\leqslant G_{\varepsilon}^{\delta}(u), \quad \text{for every } u\in L^{1}(\Omega).
			\label{G1}
		\end{equation}
		From \eqref{G1} it follows that for every $z\in \R\setminus\{0\}$,  $\nu \in \Sph^{n-1}$, and  $\ell\in [0,+\infty]$ 
	\begin{equation}
			\hat{g}(\nu) \leqslant g^\ell_{\hom}(z,\nu) \leqslant g^{\delta}(z,\nu).
			\label{GGG1}
		\end{equation}	
Indeed let $(u_\e)\subset L^1(\Om)$ be a recovery sequence for $G_\e^\delta$, with $u_\e \to u_{0,z}^\nu$ strongly in $L^1(\Om)$. Then, since 
$u_\e \to u_{0,z}^\nu$ in the sense of Definition \ref{def:conv}, we have 
\begin{align*}
g^\delta(z,\nu) \mathcal{H}^{n-1}(\Om\cap\Pi^\nu_0) &= G^\delta(u_{0,z}^\nu) = \lim_{\e\to 0}G_\e^\delta(u_\e)\geqslant \liminf_{\e\to 0}F_\e(u_\e) \\
&\geqslant F^\ell(u_{0,z}^\nu) = g^\ell_{\hom}(z,\nu) \mathcal{H}^{n-1}(\Om\cap\Pi^\nu_0).
\end{align*}
Similarly, let $(u_\e)\subset L^1(\Om)$ be a recovery sequence for $F_\e$, with $u_\e \to u_{0,z}^\nu$  in the sense of Definition \ref{def:conv}. Then there exists a sequence $\tilde u_\e$ with $\tilde u_\e = u_\e$ in $\Omega\cap \e P$ and such that $\tilde u_\e$ converges to $u_{0,z}^\nu$ strongly in $L^1(\Om)$; hence 
\begin{align*}
g^\ell_{\hom}(z,\nu) \mathcal{H}^{n-1}(\Om\cap\Pi^\nu_0) &= F^\ell(u_{0,z}^\nu) = \lim_{\e\to 0}F_\e(u_\e)\geqslant \liminf_{\e\to 0}\widehat F_\e(u_\e) = \liminf_{\e\to 0}\widehat F_\e(\tilde u_\e) \\
&\geqslant \widehat F(u_{0,z}^\nu) = \hat g(z,\nu) \mathcal{H}^{n-1}(\Om\cap\Pi^\nu_0),
\end{align*}
where $\Pi^{\nu}_0:= \{y\in \R^n: y\cdot \nu = 0\}.$

		We now claim that for every $z\in \R\setminus\{0\}$,  $\nu \in \Sph^{n-1}$
		\begin{equation}\label{claim-gh}
		g^{\delta}(z,\nu) \leqslant \hat{g}(\nu) + o(1),
		\end{equation}
	as $\delta \to 0$;  hence \eqref{claim-gh} together with \eqref{GGG1} will imply the thesis. 
	
	\smallskip

	To prove \eqref{claim-gh} let $\nu \in \Sph^{n-1}$ and $t>0$ be fixed, and let $\bar u \in \mathcal P(tQ^{\nu} \cap P)$ be such that $\bar u = u^{\nu}_{0,1}$ in a neighbourhood of $\partial (tQ^{\nu})$ and
	\begin{equation}\label{quasi-min-gh}
	\mathcal H^{n-1}  ({S_{\bar u}\cap tQ^{\nu} \cap P}) = \min \Big\{\mathcal H^{n-1} ({S_{ u}\cap tQ^{\nu} \cap P})\colon
			u \in \mathcal P(tQ^{\nu}\cap P),
			u = u_{0,1}^\nu \; \text{on}\;  \partial (tQ^{\nu})
			\Big\}.
	\end{equation}
	Note that the minimiser in \eqref{quasi-min-gh} exists: indeed any minimising sequence is weakly convergent in $BV(tQ^{\nu} \cap P)$, and the Hausdorff measure of the jump set in $tQ^{\nu} \cap P$ is lower semicontinuous.

	We now modify $\bar u$ in order to get a competitor for the minimisation problem in the definition of $g^{\delta}$.
	More precisely, we construct from $\bar u$ a Caccioppoli partition $w$ defined on the whole $tQ^{\nu}$ and such that 
	\begin{equation}\label{claim-d-z}
	\mathcal H^{n-1} (S_w\cap tQ^\nu \cap P)+\delta \int_{S_w\cap tQ^\nu}(1+|[w]|)\,d\mathcal H^{n-1}  \leqslant (1+\delta c(z))\mathcal H^{n-1}  (S_{\bar u}\cap tQ^\nu \cap P)+ \delta c(z) t^{n-1}
	\end{equation}
	holds true for some constant $c(z)>0$ independent of $t$ and $\delta$.
	
	It is convenient to write
		\begin{equation*}
			tQ^{\nu} \cap P 
			= \bigcup\limits_{k \in  \mathcal I^1}(Q^{k} \cap P) 
			\cup \bigcup\limits_{k\in \mathcal I^2}\big(Q^{k} \cap tQ^{\nu} \cap P\big),
		\end{equation*} 
	where $Q^k := Q+ k$, $\mathcal I^1:=\{k\in \Z^n \colon Q^k \subset tQ^{\nu}\}$, and $\mathcal I^2:=\{k \in \Z^n \colon Q^k \cap  \partial(tQ^{\nu})\neq\emptyset\}$.
	 We now illustrate in detail the multi-step construction of the desired function $w$.
	
	\medskip

\noindent \textit{Step 1: Modification of $\bar u$ in the ``internal" cubes $Q^k$, for $k\in \mathcal I^1$.} 
Let $k \in \mathcal I^1$, and set $u^k:= \bar u_{|Q^k\cap P}$. We recall that for our choice of $P$ we have $Q^k\cap P= Q^k_{1/2,1}$, where we defined $Q^{k}_{a,b}:=Q_{a,b}+ k$ for any $0<a<b\leqslant 1$. We also define $B^{k}_{a,b}:=B_{a,b}+ k$, for any $0<a<b\leqslant 1$.

\smallskip

Let $\sqrt{2}/4\leqslant r< r+ s<1$ be fixed.  We extend the function $u^k$ differently depending on whether the function $u^k$ admits a \emph{small} or a \emph{large} jump set in $B^{k}_{r,r+s}$ (note that $B^{k}_{r,r+s}\subseteq Q^k_{1/2,1}$).
More precisely, we say that ${u}^k$ has a small jump set in $B^{k}_{r,r+s}$ if
		\begin{equation}
			\mathcal{H}^{n-1}(S_{{u}^k} \cap B^{k}_{r,r+s}) \leqslant \gamma s^{n-1},
			\label{GC1}
		\end{equation}
where $\gamma=\gamma(n,\eta)$ is as in Lemma \ref{fracture-lemma}, corresponding to $\eta=1$.

Note that for the cubes with small jump, the assumptions $(H1)$ and $(H2)$ of Lemma \ref{fracture-lemma} are satisfied by $u^k$. Indeed, $(H1)$ follows by the minimality of $\bar u$ (defined in \eqref{quasi-min-gh}) in $t Q^\nu\cap P$, which implies its local minimality in every subset, and hence in particular the local minimality of $u^k$ in $B^{k}_{r,r+s}$, for every $k\in \mathcal I^1$. Finally, $(H2)$ is exactly \eqref{GC1}. 

By Lemma \ref{fracture-lemma} there exists $\bar{r} \in (r + s/3, r + 2 s/3)$ such that $S_{u^k} \cap \partial B^k_{\bar{r}} = \emptyset$, namely the trace of ${u}^k$ on $\partial B^{k}_{\bar{r}}$ is constant. We denote this constant value by $m^k$. Then we define the function $v^k$ on the whole $Q^k$ as follows
		\begin{equation}\label{vk-0}
			v^k :=  
			\begin{cases} 
			\displaystyle
			{u}^k
			& \mbox{in } Q^k \setminus \bar{B}^k_{\bar{r}}, \\
			m^k
			& \mbox{in } \bar{B}^k_{\bar{r}}. 
			\end{cases}
		\end{equation}	
	Clearly $v^k\in \mathcal P(Q^{k})$, and
			\begin{align*}
			\mathcal{H}^{n-1}(S_{v^k} \cap Q^{k})
			= \mathcal{H}^{n-1}(S_{{u}^k} \cap (Q^k \setminus \bar{B}^k_{\bar{r}}))
			\leqslant \mathcal{H}^{n-1}(S_{\bar u} \cap Q^{k} \cap P).
	\end{align*}
	Hence in a cube with small jump we have replaced $\bar u$ with a function $v^k \in \mathcal P(Q^{k})$ whose Mumford-Shah energy in $Q^k$ is controlled by the energy of $\bar u$ in $Q^{k}\cap P$. 
	
	If now $u^k$ has a large jump in $B^k_{r,r+s}$; \textit{i.e.,} if \eqref{GC1} is not satisfied, then we extend ${u}^k$ to $Q^k$ by simply setting
		\begin{equation}\label{vk-1}
			v^k:=  
			\begin{cases} 
			\displaystyle
			{u}^k 
			& \mbox{in } Q^{k}_{1/2,1}, \\
			0
			& \mbox{in } Q^k_{1/2}. 
			\end{cases}
		\end{equation}	
	Clearly $v^k \in \mathcal P(Q^{k})$, and
		\begin{align*}
			\mathcal{H}^{n-1}(S_{v^k} \cap Q^{k})
			&\leqslant  2n (1/2)^{n-1}+ \mathcal{H}^{n-1}(S_{\bar u} \cap Q^{k} \cap P)\\
			&< \frac{ 2n (1/2)^{n-1}}{\gamma s^{n-1}}\mathcal{H}^{n-1}(S_{{u}^k} \cap B^k_{r,r+s})
			+  \mathcal{H}^{n-1}(S_{\bar u} \cap Q^{k} \cap P)\
			\\
			&\leqslant 
			C  \mathcal{H}^{n-1}(S_{\bar u} \cap Q^{k} \cap P),
		\end{align*}
where $\omega_{n-1}$ is the surface of the unit sphere $\Sph^{n-1}$.
	Thus finally, for every $k \in \mathcal I^1$ we have replaced $\bar u$ with a function $v^k \in \mathcal P(Q^{k}) \subset SBV^{\textrm{pc}}(Q^{k})$ whose energy in $Q^k$ is controlled by the energy of $\bar u$ in $Q^{k}\cap P$; moreover, $\|\bar u\|_{L^\infty(Q^k\cap P)}=\|v^k\|_{L^\infty(Q^k)}$.

\medskip

	\noindent \textit{Step 2: Modification of $\bar u$ in the ``boundary" cubes $Q^k$, $k\in \mathcal I^2$.}
	In this step we consider only the cubes $Q^k$ such that $Q^k\cap \partial( tQ^\nu) \neq \emptyset$. In order to preserve the boundary condition we need to distinguish between two cases. If $Q^k\cap \partial( tQ^\nu \cap \{x\cdot \nu>0\}) \neq \emptyset$, in $Q^k$ we set 
\begin{equation}\label{vk-2}
v^k:= \begin{cases} 
			\displaystyle
			\bar{u}
			& \mbox{in } Q^{k}_{1/2,1}, \\
			1
			& \mbox{in } Q^{k}_{1/2},
			\end{cases}
			\end{equation}	
	while in those cubes $Q^k$ such that
	$
	Q^k\cap \partial( tQ^\nu \cap \{x\cdot \nu<0\}) \neq \emptyset,
	$
we set
\begin{equation}\label{vk-3}
v^k:= \begin{cases} 
			\displaystyle
			\bar{u}
			& \mbox{in } Q^{k}_{1/2,1}, \\
			0
			& \mbox{in } Q^{k}_{1/2}.
			\end{cases}
			\end{equation}			
The additional energy contribution of the boundary cubes is proportional to the perimeter of $t Q^\nu$\ie of order $Ct^{n-1}$ for some $C>0$ independent of $t$. 
\medskip			

	\noindent \textit{Step 3: Adding up all the cubes.} We now denote with $v\in\mathcal{P}(t Q^\nu)$ the function defined as $v=v^k$ in $Q^k$ for every $k$, where $v^k$ is as in \eqref{vk-0} or \eqref{vk-1} if $k\in \mathcal I^1$, as in \eqref{vk-2} if $k\in \mathcal I^2$ and $Q^k$ intersects $\partial( tQ^\nu \cap \{x\cdot \nu>0\})$, or \eqref{vk-3} if $k\in \mathcal I^2$ and $Q^k$ intersects $\partial( tQ^\nu \cap \{x\cdot \nu<0\})$.

	\medskip
	
By construction the function $v$ satisfies the following properties:	

	\begin{enumerate}
	        \item[i.] $ v\in \mathcal P(tQ^{\nu})$,
		
		\smallskip
		
		\item[ii.] $v =u_{0,1}^\nu$ in a neighbourhood of $\partial (tQ^{\nu})$, 
		
		\smallskip
		
		\item[iii.] $\mathcal{H}^{n-1}(S_{v} \cap (tQ^{\nu} \cap P)) \leqslant  \mathcal{H}^{n-1}(S_{\bar u} \cap (tQ^{\nu} \cap P))$,
		
		\smallskip
		
		\item[iv.] $\mathcal{H}^{n-1}(S_{v} \cap tQ^{\nu}) \leqslant C (\mathcal{H}^{n-1}(S_{\bar u} \cap (tQ^{\nu} \cap P)) + t^{n-1})$,
		
		\smallskip
		
		\item[iv.] $\|v\|_{L^{\infty}(tQ^{\nu})} = \|\bar u\|_{L^{\infty}(tQ^{\nu}\cap P)}=1$.
	\end{enumerate}

\smallskip
	
We finally set $w:=z v$ so that 	$w =u_{0,z}^\nu$ in a neighbourhood of $\partial (tQ^{\nu})$ and therefore it can be used as a competitor in the minimisation problem defining $g^\delta(z,\nu)$.
	
\medskip

	We are now able to compare $g^{\delta}(z,\nu)$ and $\hat{g}(\nu)$. By definition of $w$ have			
	\begin{align*}
			&\mathcal{H}^{n-1}(S_{w} \cap tQ^{\nu} \cap P) 
			+ \delta\int_{S_{w} \cap tQ^{\nu}}(1+|[w]|) \ d\mathcal{H}^{n-1} \\
			&\leqslant \mathcal{H}^{n-1}(S_{\bar u} \cap (tQ^{\nu} \cap P))
			+ \delta(1+|z|) \mathcal{H}^{n-1}(S_{v} \cap tQ^{\nu}) \\
			&\leqslant \mathcal{H}^{n-1}(S_{\bar u} \cap (tQ^{\nu} \cap P))
			+ \delta \,C(1+|z|)(\mathcal{H}^{n-1}(S_{\bar u} \cap (tQ^{\nu} \cap P)) + t^{n-1}),
	\end{align*}
	which is exactly the claim \eqref{claim-d-z}. Then in view of \eqref{g-delta-t} by dividing the above expression by $t^{n-1}$ and letting $t \rightarrow +\infty$, we get
		\begin{equation*}
			g^{\delta}(z,\nu) 
			\leqslant \lim\limits_{t \rightarrow +\infty}\dfrac{1}{t^{n-1}}
			\Big(\mathcal{H}^{n-1}(S_{\bar u} \cap (tQ^{\nu} \cap P))(1 + \delta\,C(1+|z|)\Big)
			+ \delta\,C(1+|z|).
		\end{equation*}
	By virtue of \eqref{quasi-min-gh} the previous estimate yields 
		\begin{equation*}
			g^{\delta}(z,\nu)
			\leqslant \hat{g}(\nu) \left(1 + \delta\, C(1+|z|)\right) +  \delta\, C(1+|z|),
		\end{equation*}
	and hence the claim.
	\end{proof}
	
	%%%%%%%%%%%%%%%%%%%%%%%  volume density   %%%%%%%%%%%%%%%%%%%%%%%%

\section{Identification of the homogenized  volume integrand}\label{sect:f}
	In this section we identify the limit volume integrand $f^\ell_{\rm{hom}}$. We start with a preliminary result.

\begin{figure}
\includegraphics{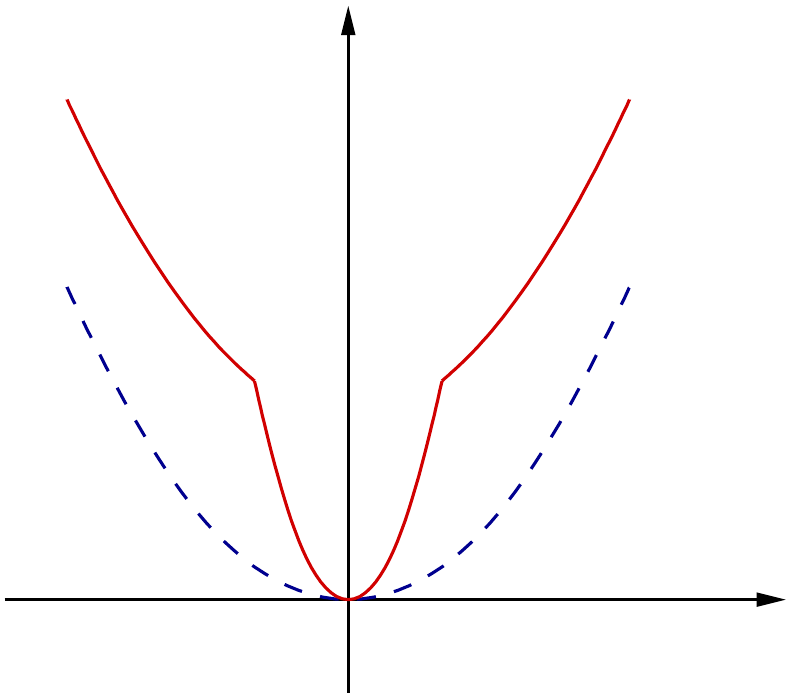}
\caption{Bounds on $f^\ell_{\rm hom}$}
\begin{picture}(0,0) 
\put(-10,50){{$0$}}
\put(90,50){{$\xi$}}
\put(55,110){$\hat f(\xi)$}
\put(60,180){$\min\{|\xi|^2,\hat f(\xi)+C\ell\}$}
\end{picture}
\end{figure}

\begin{lemma}
Let $f_{\rm{hom}}^\ell$ be as in Theorem \ref{t:int-rep} and let $\hat f$ be as in \eqref{F}. For every $\xi\in\mathbb{R}^{n}$ we have
		\begin{equation}
			\hat{f}(\xi) 
			\leqslant f^\ell_{\rm{hom}}(\xi) 
			\leqslant \min \{|\xi|^{2}, \hat{f}(\xi) 
			+ C \ell\},
			\label{ILV1}
		\end{equation}  
	where $C>0$ is a constant depending only on $P$ and on $\Omega$.
	\label{PL1}
\end{lemma}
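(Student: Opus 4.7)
The lower bound $\hat f(\xi)\leqslant f^\ell_{\rm hom}(\xi)$ is already contained in \eqref{c:bd-on-fhom}, so only the two upper bounds need to be proved. By the integral representation of $F^\ell_{\rm hom}$ applied to the affine function $u_\xi(x):=\xi\cdot x$ and to any $U\in\Am(\Om)$, one has $f^\ell_{\rm hom}(\xi)\,\mathcal L^n(U)=F^\ell_{\rm hom}(u_\xi,U)$, so it is enough to exhibit recovery sequences $u_\e\to u_\xi$ (in the sense of Definition \ref{def:conv}) whose $\limsup_\e F_\e(u_\e,\Om)$ is controlled by the two right-hand sides. The bound $f^\ell_{\rm hom}(\xi)\leqslant|\xi|^2$ is immediate: the stationary sequence $u_\e\equiv u_\xi$ has $S_{u_\xi}=\emptyset$ and $\nabla u_\xi=\xi$, hence $F_\e(u_\xi,\Om)=|\xi|^2\mathcal L^n(\Om)$ for every $\e>0$.

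The nontrivial inequality $f^\ell_{\rm hom}(\xi)\leqslant\hat f(\xi)+C\ell$ is proved by a two-scale construction combining the standard oscillating profile in the matrix with a constant in the inclusions, separated by a thin buffer layer. Fix $\eta>0$ and choose $w\in H^1_{\rm per}(Q\cap P)$ with $\int_{Q\cap P}|\xi+Dw|^2\,dy\leqslant\hat f(\xi)+\eta$. Since $\partial(Q\cap P)$ is Lipschitz, $w$ admits a Sobolev extension $W\in H^1_{\rm per}(Q)$. For $\delta\in(0,1/4)$ and for every $i\in\Z^n$ with $\e(i+Q)\subset\Om$ set
\begin{equation*}
u_\e(x):=\begin{cases}
\xi\cdot x+\e W(x/\e) & \text{if } x\in\e\bigl(i+Q\setminus Q_{1/2-\delta}\bigr),\\
\xi\cdot(\e i) & \text{if } x\in\e\bigl(i+Q_{1/2-\delta}\bigr),
\end{cases}
\end{equation*}
and $u_\e(x):=\xi\cdot x$ on the remaining boundary cells (whose total measure is $O(\e)$). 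The $Q$-periodicity of $W$ makes $u_\e$ continuous across every internal cell interface, so $S_{u_\e}$ is contained in $\bigcup_i\e(i+\partial Q_{1/2-\delta})$ which lies \emph{strictly inside} $\Om\setminus\e P$ and therefore receives the weight $\beta_\e$. Standard periodic homogenization gives
\begin{equation*}
\int_{\Om\cap\e P}|\xi+Dw(x/\e)|^2\,dx\longrightarrow\mathcal L^n(\Om)\bigl(\hat f(\xi)+\eta\bigr);
\end{equation*}
the volume contribution of the buffer is bounded by $\mathcal L^n(\Om)\int_{Q_{1/2-\delta,1/2}}|\xi+DW|^2\,dy$ which vanishes as $\delta\to 0$ by absolute continuity of the integral, the inner-core contribution is zero, and
\begin{equation*}
\beta_\e\,\mathcal H^{n-1}\bigl(S_{u_\e}\cap(\Om\setminus\e P)\bigr)\longrightarrow 2n(1/2-\delta)^{n-1}\ell\,\mathcal L^n(\Om).
\end{equation*}
Moreover $\|u_\e-u_\xi\|_{L^2(\Om)}\leqslant C\e(\|W\|_{L^2(Q)}+|\xi|)\to 0$ by the standard periodic scaling, so $u_\e\to u_\xi$ in $L^2(\Om)$ and hence in the sense of Definition \ref{def:conv}. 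Sending $\e\to 0$, then $\delta\to 0$, and finally $\eta\to 0$ gives the claim with $C=2n(1/2)^{n-1}$, which depends only on $P$.

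The main obstacle is the location of the jump set: since $P$ is closed, a construction placing $S_{u_\e}$ on $\partial(\e P)$ would have those jumps weighted by $1$ rather than $\beta_\e$, producing a surface contribution of order $1/\e$ that blows up. The buffer annulus $Q_{1/2-\delta,1/2}$ is introduced precisely to push the jump strictly into the open inclusions, so that the weight becomes $\beta_\e$ and the ratio $\beta_\e/\e\to\ell$ yields a finite limit. All the remaining ingredients — the $L^2$-convergence required by Definition \ref{def:conv}, the homogenization limit of the matrix integral, and the smallness of the buffer term as $\delta\to 0$ — are routine.
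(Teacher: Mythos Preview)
Your argument follows the same idea as the paper's: build a recovery sequence for $u_\xi$ by using the periodic corrector in the matrix and replacing it by a constant inside each inclusion, so that the only jump set sits inside the inclusions and receives weight $\beta_\e$; since there are $\sim\e^{-n}$ inclusions each contributing surface $\sim\e^{n-1}$, the factor $\beta_\e/\e\to\ell$ emerges. The paper carries this out slightly differently: it starts from an \emph{abstract} $H^1(\Omega)$ recovery sequence $v_\e$ for the perforated Dirichlet functional (so $v_\e$ is already defined globally on $\Omega$, with no cell-by-cell gluing), and then replaces $v_\e$ by its cell average over each inclusion, using Poincar\'e--Wirtinger to recover the $L^2$-convergence. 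Your explicit corrector together with the $\delta$-buffer has the merit of placing $S_{u_\e}$ unambiguously in the interior of $\Omega\setminus\e P$, a point the paper leaves to the convention on whether $\partial Q_{1/2}$ belongs to $P$ or to its complement.

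One point needs repair. Setting $u_\e=\xi\cdot x$ on the boundary cells introduces a jump of size $\e W(x/\e)$ on every face shared between an interior cell and a boundary cell; those faces lie in $\e P$, hence carry weight $1$, and their total $\mathcal H^{n-1}$-measure is of order $1$ (there are $\sim\e^{1-n}$ such faces, each of area $\e^{n-1}$). This adds an $O(1)$ surface term to $\limsup_\e F_\e(u_\e,\Omega)$ that does not vanish when you divide by $\mathcal L^n(\Omega)$. The fix is immediate: use the \emph{same} piecewise formula on every cell meeting $\Omega$ and simply restrict to $\Omega$; periodicity of $W$ still guarantees continuity across all cell faces, and nothing else in your estimates changes. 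The paper avoids this altogether because its $v_\e\in H^1(\Omega)$ is global from the outset.
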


\begin{proof}
	Let $\xi \in \mathbb{R}^{n}\setminus \{0\}$. By the classical homogenization result for the Dirichlet functional on a perforated domain \cite[Theorem 5.1]{Braides2}, there exists a sequence $(v_{\varepsilon}) \subset H^{1}(\Omega)$ which converges strongly in $L^{2}(\Omega)$, as $\e \to 0$, to the linear function $u_{\xi}(x):=\xi\cdot x$, such that $\|Dv_{\varepsilon}\|_{L^{2}(\Omega)} \leqslant C$, and 
		\begin{equation}
			\limsup_{\varepsilon \rightarrow 0}
			\int_{\Omega \cap \varepsilon P} |D v_{\varepsilon}|^2 dx 
			= \mathcal{L}^{n}(\Omega)\hat{f}(\xi),
			\label{sobolev-homogenization}
		\end{equation}
	where $\hat{f}$ is the quadratic form defined in \eqref{F}.	
	We define the new sequence $u_{\varepsilon}: \Omega \to \R$ as
		\begin{equation}
			u_{\varepsilon}:=  
			\begin{cases} 
			\displaystyle
			\smallskip
			v_{\varepsilon}
			& \mbox{in } Q^{k,\varepsilon}_{\frac{1}{2},1} \cap \Omega \\
			m_{\varepsilon}^{k}
			& \mbox{in } Q^{k,\varepsilon}_{\frac{1}{2}} \cap \Omega
			\end{cases}\qquad k\in \Z^n
			\label{recovering-sequence}
		\end{equation}	
	where for $0<s<r$ we set $Q^{k,\varepsilon}_{s,r}:=\varepsilon(Q_{s,r} + k)$ for $k\in\mathbb{Z}^{n}$, and 
		\begin{equation*}
			m^{k}_{\varepsilon} := \avint_{Q^{k,\varepsilon}_{\frac{1}{2}} \cap \Omega} v_{\varepsilon}(x) \ dx.
		\end{equation*}
	By \eqref{recovering-sequence} we immediately get
		\begin{equation*}
			\|v_{\varepsilon} - u_{\varepsilon}\|_{L^{2}(\Omega)}^{2} 
			= \sum_{k \in \mathbb{Z}^{n}} 
			\int_{Q^{k,\varepsilon}_{\frac{1}{2}} \cap \Omega} |v_{\varepsilon}(x) - m_{\varepsilon}^{k}|^{2} \ dx,
			\label{lemma-sequence}
		\end{equation*}
	moreover by the Poincar\'{e}-Wirtinger inequality, for every $k \in \Z^n$ we have
		\begin{equation*}\label{lemma-sequence2}
			\int_{Q^{k,\varepsilon}_{\frac{1}{2}} \cap \Omega} 
			|v_{\varepsilon}(x) - m_{\varepsilon}^{k}|^{2} \ dx 
			\leqslant C\e^{2}
			\int_{Q^{k,\varepsilon}_{\frac{1}{2}}\cap \Omega} |D v_{\varepsilon}(x) |^{2} \ dx,
		\end{equation*}
	with a constant $C>0$ independent of $\varepsilon$ and of $k$. 
	From the bound $\|D v_{\varepsilon}\|_{L^{2}(\Omega)} \leqslant C$ we eventually deduce 
		\begin{equation*}
			\|v_{\varepsilon} -u_{\varepsilon}\|_{L^{2}(\Omega)}		
		\leqslant C \varepsilon,
			\label{control-lemma}
		\end{equation*}
	therefore, since $v_{\varepsilon} \rightarrow u_{\xi}$ strongly in $L^{2}(\Omega)$, also $(u_{\varepsilon})$ converges to $u_{\xi}$ strongly in $L^{2}(\Omega)$. 
	We now estimate 
		\begin{align}
			F_{\varepsilon}(u_{\varepsilon}) 
			&= \int_{\Omega}|\nabla u_{\varepsilon}|^{2} dx 
			+ \mathcal{H}^{n-1}(S_{u_{\varepsilon}} \cap (\Omega \cap \varepsilon P))
			+ \beta_{\varepsilon} \mathcal{H}^{n-1}(S_{u_{\varepsilon}} \cap (\Omega 
			\setminus \varepsilon P)) \nonumber\\
			&\leqslant \int_{\Omega \cap \varepsilon P}|D v_{\varepsilon}|^{2} dx 
			+ \beta_{\varepsilon} \mathcal{H}^{n-1}(S_{u_{\varepsilon}} \cap \partial(\Omega 
			\setminus \varepsilon P)).	
			\label{control-lemma2}			
		\end{align} 
	For the second term in \eqref{control-lemma2} we have
		\begin{equation*}
			\beta_{\varepsilon} 
			\mathcal{H}^{n-1}(S_{u_{\varepsilon}} \cap \partial(\Omega \setminus \varepsilon P)) 
			\leqslant \beta_{\varepsilon}N(\varepsilon) 
			\mathcal{H}^{n-1}(\partial Q_{\e/2})
			+ \beta_{\varepsilon} \mathcal{H}^{n-1}(\partial \Omega),
			\end{equation*}
	where $N(\varepsilon)$ is the cardinality of the set $\{k\in\Z^n: \overline{Q^{k,\varepsilon}}\cap \Omega\neq \emptyset\}$. Since $\Omega$ is a bounded set with Lipschitz boundary, we have that $N(\e)\leqslant c/\e^n$ and $ \mathcal{H}^{n-1}(\partial \Omega)<c$, which gives 
		\begin{equation*}
			F_{\varepsilon}(u_{\varepsilon}) 
			\leqslant \int_{\Omega \cap \varepsilon P}|D v_{\varepsilon}|^{2} dx  
			+ C \dfrac{\beta_{\varepsilon}}{\varepsilon}.
		\end{equation*}
	Now, as in particular $u_{\varepsilon} \rightarrow u_{\xi}$ in the sense of Definition \ref{def:conv},  
	by Theorem \ref{Gamma-compactness} and Theorem \ref{t:int-rep}, we have
		\begin{equation*}
			\mathcal{L}^{n}(\Omega) f^\ell_{\rm{hom}}(\xi)
			= F^{\ell}_{\rm{hom}}(u_{\xi})
			\leqslant \liminf_{\varepsilon \rightarrow 0}
			F_{\varepsilon}(u_{\varepsilon})
			\leqslant \limsup_{\varepsilon \rightarrow 0} 
			\int_{\Omega \cap \varepsilon P}|D v_{\varepsilon}|^{2} dx
			+ C \ell,
		\end{equation*}
	which, together with \eqref{sobolev-homogenization}, gives 
		\begin{equation*}
			\mathcal{L}^{n}(\Omega) f^\ell_{\rm{hom}}(\xi)
			\leqslant \limsup_{\varepsilon \rightarrow 0} 
			\int_{\Omega \cap \varepsilon P}|D v_{\varepsilon}|^{2} dx
			+ C \ell
			\leqslant \mathcal{L}^{n}(\Omega) \hat{f}(\xi)
			+ C \ell.
		\end{equation*}
	Dividing by $\mathcal{L}^{n}(\Omega)$ and using \eqref{c:bd-on-fhom} concludes the proof.
	\end{proof}

\subsection{Subcritical case: $\ell = 0$.}
	\begin{theo}[Identification of the homogenized  volume integrand for $\ell=0$]\label{t:f-hom-f-hat}
	Let $f_{\rm{hom}}^0$ be as in Theorem \ref{t:int-rep} and corresponding to the choice $\ell=0$, and let $\hat{f}$ be as in \eqref{F}.
	Then for every $\xi \in \mathbb{R}^{n}$ we have $f_{\rm{hom}}^0(\xi) = \hat{f}(\xi)$.
	\end{theo}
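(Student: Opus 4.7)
The plan is essentially to read off the identity from the bounds already established in Lemma \ref{PL1}, specialised to $\ell=0$. Indeed, for every $\xi \in \mathbb{R}^n$ Lemma \ref{PL1} gives
$$
\hat f(\xi) \leqslant f^\ell_{\rm hom}(\xi) \leqslant \min\{|\xi|^2, \hat f(\xi) + C\ell\}
$$
for every $\ell \in [0,+\infty]$. Setting $\ell=0$ immediately collapses the right-hand side to $\hat f(\xi)$, so the upper and lower bounds coincide and yield $f^0_{\rm hom}(\xi) = \hat f(\xi)$.

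Concretely, I would spell out only the upper bound $f^0_{\rm hom}(\xi) \leqslant \hat f(\xi)$, since the lower bound $\hat f(\xi) \leqslant f^\ell_{\rm hom}(\xi)$ was obtained in Theorem \ref{t:int-rep} via $\widehat F \leqslant F^\ell_{\rm hom}$. For the upper bound I would recall the recovery sequence $(u_\e)$ built in the proof of Lemma \ref{PL1}: starting from a classical Sobolev-type recovery sequence $(v_\e) \subset H^1(\Omega)$ for $\hat f(\xi)$ on the perforated domain (in the sense of \eqref{sobolev-homogenization}), the sequence $u_\e$ is defined to equal $v_\e$ in the matrix $\Omega \cap \varepsilon P$ and to be constant (equal to the average of $v_\e$) on each inclusion $Q^{k,\varepsilon}_{1/2}$. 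This $u_\e$ converges to $u_\xi$ in $L^2(\Om)$, and hence in particular in the sense of Definition \ref{def:conv}, and the associated energy splits as
$$
F_\e(u_\e) \leqslant \int_{\Omega\cap\varepsilon P}|D v_\e|^2\,dx + \beta_\e\,\mathcal{H}^{n-1}(S_{u_\e}\cap\partial(\Omega\setminus \varepsilon P)),
$$
where the surface contribution is bounded by $C\beta_\e/\e$.

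Taking $\liminf$ as $\e\to 0$, applying the $\Gamma$-liminf inequality from Theorem \ref{Gamma-compactness} together with the integral representation of Theorem \ref{t:int-rep}, and dividing by $\mathcal{L}^n(\Omega)$, one obtains
$$
f^0_{\rm hom}(\xi) \leqslant \hat f(\xi) + C \lim_{\e\to 0}\frac{\beta_\e}{\e} = \hat f(\xi),
$$
since $\ell=0$ precisely means $\beta_\e/\e\to 0$. Combined with $\hat f(\xi)\leqslant f^0_{\rm hom}(\xi)$, this proves the claim. There is no real obstacle here: all the technical work — constructing the admissible extension into the inclusions without affecting the $L^2$ convergence, and quantifying the negligible surface cost — is already contained in Lemma \ref{PL1}. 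The content of Theorem \ref{t:f-hom-f-hat} is thus the observation that in the subcritical regime the extra cost paid for ``cutting out'' the weak inclusions is of order $\beta_\e/\e$ and hence vanishes, so the effective behaviour coincides with that of the perforated Mumford-Shah functional $\widehat F_\e$.
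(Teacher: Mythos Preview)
Your proposal is correct and follows exactly the paper's approach: the paper's proof consists of the single sentence ``The thesis readily follows from \eqref{ILV1} by using the assumption that $\ell=0$, and the upper bound in \eqref{B1}.'' Your elaboration on the recovery-sequence construction merely recapitulates the proof of Lemma~\ref{PL1} and is not needed here; the only small point worth making explicit is that $\min\{|\xi|^2,\hat f(\xi)\}=\hat f(\xi)$ uses $\hat f(\xi)\leqslant |\xi|^2$ from \eqref{B1}, which the paper does cite.
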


\begin{proof}
	The thesis readily follows from \eqref{ILV1} by using the assumption that $\ell=0$, and the upper bound in \eqref{B1}.
\end{proof}

\begin{rem}
Note that, although in Theorem \ref{Gamma-compactness} the $\Gamma$-convergence of $F_\e$ has been established only up to subsequences (and the $\Gamma$-limit might be in principle different along different subsequences), in the subcritical case the situation is different. Indeed, thanks to Theorem \ref{t:g-hom-g-hat} and Theorem \ref{t:f-hom-f-hat}, we deduce that the $\Gamma$-limit is the same for every subsequence, and is given by the functional $\widehat{F}$ defined in \eqref{def:Fhat}.
\end{rem}

\medskip

\subsection{Supercritical case: $\ell = +\infty$.}

\begin{theo}[Identification of the homogenized  volume integrand for $\ell=+\infty$]\label{id:f_infty} 
	Let $f_{\rm{hom}}^\infty$ be as in Theorem \ref{t:int-rep} and corresponding to the choice $\ell=+\infty$.
	Then, for every $\xi \in \mathbb{R}^{n}$, we have that $f_{\rm{hom}}^{\infty}(\xi) = |\xi|^2$. 
	
	\end{theo}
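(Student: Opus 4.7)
The upper bound $f^\infty_{\rm hom}(\xi) \leqslant |\xi|^2$ is immediate: it is contained in Lemma \ref{PL1} (with $\ell = +\infty$), and equivalently it follows from $F_\e \leqslant MS$ applied to the constant recovery sequence $u_\e \equiv u_\xi$, which has no jumps. The substance of the theorem is therefore the converse bound $f^\infty_{\rm hom}(\xi) \geqslant |\xi|^2$. By the $\Gamma$-convergence of Theorem \ref{Gamma-compactness} and the integral representation of Theorem \ref{t:int-rep}, it is enough to show
\[
\liminf_{\e \to 0} F_\e(u_\e) \geqslant |\xi|^2 \mathcal{L}^n(\Omega)
\]
for every sequence $(u_\e) \subset L^1(\Omega)$ converging to $u_\xi(x) := \xi \cdot x$ in the sense of Definition \ref{def:conv}, and I may assume $M := \sup_\e F_\e(u_\e) < +\infty$.

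The strategy is the cell-by-cell classification sketched in the introduction. Let $\gamma_n > 0$ be a small universal constant, taken below the threshold in the De Giorgi--Carriero--Leaci elimination property, and let $\mathcal{I}_\e := \{k \in \Z^n : \e(Q + k) \subset \Omega\}$. Call the cell $Q^{k,\e} := \e(Q+k)$ \emph{bad} if $\mathcal{H}^{n-1}(S_{u_\e} \cap Q^{k,\e}) \geqslant \gamma_n \e^{n-1}$, and \emph{good} otherwise; denote the corresponding index sets by $\mathcal{I}_\e^B$ and $\mathcal{I}_\e^G$. Splitting the surface contribution of $F_\e(u_\e)$ into its matrix and inclusion parts and using $F_\e(u_\e) \leqslant M$ gives
\[
|\mathcal{I}_\e^B| \leqslant \frac{2M}{\gamma_n \e^{n-1}} + \frac{2M}{\beta_\e \gamma_n \e^{n-1}},
\]
so the total volume of the bad region satisfies $|\Omega_\e^B| \leqslant 2M\e/\gamma_n + 2M\e/(\gamma_n \beta_\e)$, which vanishes as $\e \to 0$ precisely because $\beta_\e/\e \to +\infty$ in the supercritical regime.

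On good cells the total jump of $u_\e$ is small; the De Giorgi--Carriero--Leaci elimination property \cite{DeGiorgi} (and its non-minimising variant from \cite{DMMS92}) then allows one to construct, in each good cell, a modified function $\bar u_\e$ of class $H^1$ on a slight shrinking of the cell, which agrees with $u_\e$ in a neighbourhood of the good part of $\partial \Omega_\e^G$ and satisfies
\[
\int_{\Omega_\e^G} |\nabla \bar u_\e|^2 \, dx \leqslant F_\e(u_\e) + o(1).
\]
Extending $\bar u_\e$ to the whole of $\Omega$ by an appropriate constant (e.g.\ the trace average) on each bad cell, and combining the vanishing of $|\Omega_\e^B|$ with the convergence of $u_\e$ in the sense of Definition \ref{def:conv} and Poincar\'e--Wirtinger on good cells, one obtains $\bar u_\e \to u_\xi$ in $L^1(\Omega)$; the a priori bound $\int_\Omega |\nabla \bar u_\e|^2 \, dx \leqslant M + o(1)$ then yields $\nabla \bar u_\e \rightharpoonup \xi$ weakly in $L^2(\Omega)$. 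The $L^2$-weak lower semicontinuity of the squared norm finally gives
\[
|\xi|^2 \mathcal{L}^n(\Omega) \leqslant \liminf_{\e \to 0} \int_\Omega |\nabla \bar u_\e|^2 \, dx \leqslant \liminf_{\e \to 0} F_\e(u_\e),
\]
which is the desired lower bound.

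The main obstacle is the modification on good cells: the elimination property is classically stated for local minimisers of $MS$, whereas our $u_\e$ is merely energy-bounded. The standard workaround is to replace $u_\e$ in each good cell by a local $MS$ minimiser with boundary datum $u_\e$, apply elimination to this minimiser to remove its jump set in a slightly smaller cube, and then show that the substitution costs at most $o(1)$ in the volume energy and does not spoil the $L^1$-convergence to $u_\xi$. Making this comparison uniform across the $O(\e^{-n})$ good cells, while keeping the error additive and independent of the particular configuration of the jump set, is the delicate technical heart of the argument.
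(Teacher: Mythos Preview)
Your overall architecture matches the paper's Theorem~\ref{ILVII}: classify cells by jump size, count bad cells via the surface energy (your bound $|\mathcal{I}_\e^B|\leqslant C/(\beta_\e\e^{n-1})$ is exactly the paper's \eqref{num_bc}), modify on good cells to get an $H^1$ function, and finish with weak $L^2$ lower semicontinuity. The gap is in the good-cell modification.

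You propose to replace $u_\e$ in each good cell by a local \emph{Mumford--Shah} minimiser $\hat u_\e$ with boundary datum $u_\e$, apply elimination, and claim the substitution costs $o(1)$. But the comparison you obtain is
\[
\int_{Q^{k,\e}}|\nabla \hat u_\e|^2 \;\leqslant\; MS(\hat u_\e, Q^{k,\e}) \;\leqslant\; MS(u_\e, Q^{k,\e}) \;=\; F_\e(u_\e, Q^{k,\e}) + (1-\beta_\e)\,\mathcal H^{n-1}\big(S_{u_\e}\cap Q^{k,\e}\setminus \e P\big),
\]
and summing the error over good cells gives $(1-\beta_\e)\,\mathcal H^{n-1}(S_{u_\e}\cap(\Omega\setminus\e P))$, which is only controlled by $M/\beta_\e$ and therefore diverges. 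In other words, $MS(u_\e)$ is \emph{not} uniformly bounded, so comparison with an $MS$-minimiser cannot yield $\int|\nabla\bar u_\e|^2\leqslant F_\e(u_\e)+o(1)$.

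The paper avoids this by working with the rescaled \emph{weighted} functional $\mathcal F_\e$ (weights $1/\beta_\e$ in the annulus $Q_{1/2,1}$, $1$ in $Q_{1/2}$), for which $F_\e(u_\e,Q^{k,\e})=\beta_\e\e^{n-1}\mathcal F_\e(v_\e,Q^k)$. It minimises $\mathcal F_\e$ (not $MS$), so the comparison with $v_\e$ is free; the large weight forces the minimiser's jump in the annulus to have measure $\leqslant C\beta_\e$, and elimination applies \emph{only there}, leaving the minimiser $H^1$ on an annulus $Q_{\rho_1,\rho_2}$. The remaining jump inside $Q_{\rho_1}$ is then removed by harmonic extension $\tilde v_\e$, and the crucial inequality $\int|D\tilde v_\e|^2\leqslant(1+\eta)\int|\nabla\hat v_\e|^2$ is obtained not by minimality but by a $\Gamma$-convergence/contradiction argument (Lemma~\ref{GammaMin}). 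This is the ``delicate technical heart'' you allude to, and it is not a routine consequence of elimination. A secondary point: on bad cells the paper uses the $SBV$ extension operator of \cite{CS1} rather than constants, which is what keeps $MS(w_\e)$ uniformly bounded and makes the Ambrosio compactness argument for $w_\e\to u_\xi$ go through; constants on bad cells would produce a jump set of size $\sim N_\e^b\,\e^{n-1}\sim 1/\beta_\e$.
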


\begin{rem}
Theorem \ref{t:g-hom-g-hat} and Theorem \ref{id:f_infty} imply that, for $\ell=+\infty$, the $\Gamma$-limit of $(F_{\varepsilon})$ is 
\begin{equation*}
			{F}^\infty_{\rm{hom}}(u) := 
			\begin{cases} 
			\displaystyle
			\int_{\Omega}|\nabla u|^2 dx 
			+ \int_{S_{u}} \hat{g}(\nu_u) d\mathcal{H}^{n-1}
			& \mbox{if} \ u \in GSBV^{2}(\Omega), \\ 
			+\infty
			& \mbox{otherwise in } L^{1}(\Omega).
			\end{cases}
		\end{equation*}
In particular, the whole sequence $(F_{\varepsilon})$ $\Gamma$-converges to $F_{\rm{hom}}^\infty$.	
\end{rem}

Before proving Theorem \ref{id:f_infty} above we need to recall the Elimination Property proved in \cite[Lemma 0.7]{DMMS92} (see also \cite[Theorem 3.6]{DeGiorgi}). For the definition of local minimiser of the Mumford-Shah functional we refer to \cite[Definition 6.6]{AFP}.

\begin{theo}[Elimination property]
Let $\Omega \subset\mathbb{R}^{n}$ be open. There exists a strictly positive dimensional constant 
$\theta = \theta(n)$ independent of $\Omega$ such that, if $u\in SBV^{2}(\Omega)$ is a local minimiser of the Mumford-Shah functional and $B_{\rho}(x_{0}) \subset \Omega$ is any ball with centre $x_{0}$ and with
	\begin{equation*}
		\mathcal{H}^{n-1}(S_{u} \cap B_{\rho}(x_{0})) < \theta \rho^{n-1},
	\end{equation*} 
then $S_{u} \cap B_{\frac{\rho}{2}}(x_{0})=\emptyset$.	
	\label{elimination}
\end{theo}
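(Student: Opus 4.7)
The plan is to derive the Elimination Property from the De Giorgi--Carriero--Leaci \emph{density lower bound} for the jump set of local minimisers of the Mumford--Shah functional. Specifically, the key intermediate step is to prove that there exists a dimensional constant $\theta_0 = \theta_0(n) > 0$ such that for every local minimiser $u \in SBV^2(\Omega)$ of $MS$ and every $x_1 \in \overline{S_u}\cap\Omega$,
\begin{equation*}
\mathcal{H}^{n-1}(S_u \cap B_r(x_1)) \geqslant \theta_0\, r^{n-1}
\end{equation*}
for every ball $B_r(x_1)\subset\subset \Omega$ with $r$ small enough. Once this density estimate is available, the Elimination Property follows at once from a simple inclusion/scaling comparison.

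The heart of the argument is therefore to establish the density lower bound, which I would prove by a blow-up and compactness scheme. Arguing by contradiction, suppose there exist local minimisers $u_k$, points $x_k \in \overline{S_{u_k}}$ and radii $r_k \to 0$ such that $\mathcal{H}^{n-1}(S_{u_k}\cap B_{r_k}(x_k)) \leqslant \eta_k r_k^{n-1}$ with $\eta_k\to 0$. By rescaling, I would pass to the functions $v_k(y):=r_k^{(1-n)/2}(u_k(x_k+r_k y)-c_k)$ (with $c_k$ a suitable additive renormalisation) defined on the unit ball, which are still quasi-minimisers of $MS$ on $B_1$ whose jump sets have $\mathcal{H}^{n-1}$-mass tending to zero. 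Using the Ambrosio $SBV$-compactness theorem together with Poincaré--Wirtinger in $SBV$, a subsequence converges weakly in $SBV^2(B_1)$ to a function $v_\infty$ whose jump set is empty, hence harmonic in $B_1$. A harmonic-comparison argument then yields the \emph{decay estimate}: if $\int_{B_r(x)}|\nabla u|^2\dx + \mathcal{H}^{n-1}(S_u\cap B_r(x))$ is sufficiently concentrated in a small set, then one can locally replace $u$ by its harmonic extension obtained after filling in the jump set, reducing the total energy; by minimality, $S_u$ must be empty in $B_{r/2}(x)$. This contradicts $x_k \in \overline{S_{u_k}}$ and proves the density lower bound.

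With the density lower bound in hand, the Elimination Property is immediate. Suppose by contradiction that there is a point $x_1 \in S_u \cap B_{\rho/2}(x_0)$. Since $B_{\rho/2}(x_1)\subset B_\rho(x_0)$ and $x_1\in\overline{S_u}$, the lower bound applied at $x_1$ with radius $\rho/2$ gives
\begin{equation*}
\theta_0 \Bigl(\frac{\rho}{2}\Bigr)^{n-1} \leqslant \mathcal{H}^{n-1}(S_u \cap B_{\rho/2}(x_1)) \leqslant \mathcal{H}^{n-1}(S_u \cap B_\rho(x_0)) < \theta\, \rho^{n-1}.
\end{equation*}
Choosing the dimensional constant $\theta:=\theta(n) < \theta_0/2^{n-1}$ yields the desired contradiction and concludes the proof. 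The real obstacle of this whole program is not the final comparison but the density lower bound itself: it rests on a delicate compactness statement in $SBV^2$ (where the jump measure is only lower semicontinuous) combined with the quantitative harmonic-comparison decay, and requires careful use of the \emph{local} minimality of $u$ against competitors that alter both the gradient and the jump set simultaneously.
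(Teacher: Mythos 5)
The paper never proves this statement: it is quoted as a known result, with the proof attributed to \cite[Lemma 0.7]{DMMS92} and \cite[Theorem 3.6]{DeGiorgi}, so there is no internal argument to compare yours against. Your overall route --- first the density lower bound $\mathcal{H}^{n-1}(S_u\cap B_r(x_1))\geqslant \theta_0 r^{n-1}$ at points $x_1\in\overline{S_u}\cap\Omega$, then elimination by the inclusion $B_{\rho/2}(x_1)\subset B_\rho(x_0)$ and the choice $\theta<\theta_0/2^{n-1}$ --- is precisely the strategy of the cited literature, and your final comparison step is correct (applying the bound at $x_1\in S_u$ is legitimate since $S_u\subset\overline{S_u}$). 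In a paper like this one, the cleanest course is what the authors do: invoke the density lower bound/elimination property as a citation rather than reprove it.

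If you do intend to reprove the density lower bound, two points in your sketch need repair and one needs substantial expansion. First, the Mumford--Shah-invariant rescaling is $v_k(y)=r_k^{-1/2}\bigl(u_k(x_k+r_ky)-c_k\bigr)$; your exponent $(1-n)/2$ agrees with $-1/2$ only when $n=2$, and with your normalisation the volume and surface terms do not scale alike, so the rescaled functions are not minimisers of the same functional. Second, the caveat ``for $r$ small enough'' must go: in the statement $\rho$ is arbitrary subject only to $B_\rho(x_0)\subset\Omega$, so you need the density bound for \emph{every} ball contained in $\Omega$. For the homogeneous $MS$ functional this is automatic by scale invariance (there is no need to take $r_k\to 0$ in the contradiction argument), but as written your blow-up suggests a smallness threshold that would only yield a weaker theorem, insufficient for the statement as given. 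Third, the compactness-plus-decay step, which you acknowledge is the heart of the matter, is compressed to the point of being only a pointer to the literature: Ambrosio compactness requires truncations or a median normalisation to get the needed $L^\infty$/$BV$ bounds, the jump term is merely lower semicontinuous so one must show that limits of (quasi-)minimisers are minimisers via a matching construction near $\partial B_1$, and the harmonic comparison must be upgraded to a dyadic decay iteration before one can conclude that a point of small density does not lie in $\overline{S_u}$. As it stands, your proposal correctly identifies the standard architecture but does not constitute an independent proof of the one genuinely hard ingredient.
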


We now introduce some auxiliary functionals which will be used in the proof of Theorem \ref{id:f_infty}.

\medskip

Let $\frac{1}{2}<r<1$ and let $\varphi\in H^{1/2}(\partial Q_r)$. For $h\in \N$ and $t>1$ we define the functionals $I_{\varphi}, I^{h,t}_\varphi: L^{1}(Q_r) \longrightarrow [0,+\infty]$ as follows:
\begin{equation}
		{I}_{\varphi}(u) := 
		\begin{cases} 
		\displaystyle
		\int_{Q_{r}}|D u|^{2} \dx
		& \mbox{if} \ u \in H^{1}(Q_r), \ u = \varphi \ \text{ on } \partial Q_r, \\ 
		+\infty
		& \mbox{otherwise in } L^{1}(Q_r),
		\end{cases}
		\label{DI1}
	\end{equation}
and
\begin{equation}
		I^{h,t}_{\varphi}(u) := 
		\begin{cases} 
		\displaystyle
		\int_{Q_{r}}|\nabla u|^{2} dx
		+ t \mathcal{H}^{n-1}(S_{u} \cap Q_{\frac{1}{2},r})
		+ \mathcal{H}^{n-1}(S_{u} \cap Q_{\frac{1}{2}}) \\
		\qquad \qquad \text{if} \ u \in SBV^{2}(Q_{r}),  
		 \ \mathcal{H}^{n-1}(S_{u}) \leqslant \dfrac{1}{h}, \ w = \varphi \ \text{ on } \partial Q_{r}, \\ 
		+\infty 
		\qquad \ \text{otherwise in} \ L^{1}(Q_{r}).
		\end{cases}
		\label{DI2}
	\end{equation}

\noindent The next result is a straightforward adaption of \cite[Lemma 4.3]{SLD1}.

\begin{lemma}\label{GammaMin}
Let $\frac{1}{2}<r<1$;  
let $\varphi \in H^{1/2}(\partial Q_r)$, and let $(\varphi_h) \subset H^{1/2}(\partial Q_{r})$ be a sequence with
	\begin{equation*}
		\varphi_{h} \rightarrow \varphi \text{ in } H^{1/2}(\partial Q_r), \ \textrm{ as } h\to +\infty.
	\end{equation*}
Then the functionals ${I}_{\varphi_{h}}$ and ${I}_{\varphi_{h}}^{h,t}$ defined, respectively, as in \eqref{DI1} and \eqref{DI2}, with $\varphi$ replaced by $\varphi_h$, $\Gamma$-converge with respect to the strong $L^1(\Omega)$-topology, as $h\to +\infty$, to the Dirichlet functional 
$I_{\varphi} : L^{1}(Q_{r}) \rightarrow [0,+\infty]$ defined in  \eqref{DI1}.
\end{lemma}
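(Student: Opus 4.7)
The strategy is to handle the two $\Gamma$-convergences in parallel, reducing both to standard facts about the Dirichlet integral with variable $H^{1/2}$-boundary data plus an extension argument across $\partial Q_r$. Fix extensions $\varphi_h^e,\varphi^e\in H^1(Q_r)$ of $\varphi_h$ and $\varphi$, respectively (e.g.\ their harmonic extensions): since the trace operator has a bounded right inverse, $\varphi_h\to\varphi$ in $H^{1/2}(\partial Q_r)$ implies $\varphi_h^e\to\varphi^e$ in $H^1(Q_r)$. We can assume these extensions are actually defined on a ball $B$ with $Q_r\subset\subset B$, still converging in $H^1(B)$.

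For the limsup inequality (simultaneously for both ${I}_{\varphi_h}$ and ${I}^{h,t}_{\varphi_h}$), given $u\in H^1(Q_r)$ with trace $\varphi$ on $\partial Q_r$, set
\[
u_h:=u+(\varphi_h^e-\varphi^e)\in H^1(Q_r).
\]
Then $u_h$ has trace $\varphi_h$, the jump set is empty (so $u_h$ is admissible for ${I}^{h,t}_{\varphi_h}$), and $u_h\to u$ in $L^1(Q_r)$ with $\int_{Q_r}|Du_h|^2\to\int_{Q_r}|Du|^2$. For ${I}_{\varphi_h}$, the liminf inequality is standard: from $\sup_h {I}_{\varphi_h}(u_h)<+\infty$ the sequence $u_h-\varphi_h^e$ is bounded in $H^1_0(Q_r)$, hence weakly convergent; the limit is $u-\varphi^e\in H^1_0(Q_r)$, giving $u\in H^1(Q_r)$ with trace $\varphi$, and weak $L^2$-lower semicontinuity of the gradient yields the inequality.

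The crux is the liminf inequality for ${I}^{h,t}_{\varphi_h}$. Suppose $u_h\to u$ in $L^1(Q_r)$ with $\sup_h {I}^{h,t}_{\varphi_h}(u_h)<+\infty$. After truncation (both the functional and the convergence are stable under truncation) we may assume the $u_h$ are uniformly bounded in $L^\infty$. Define the extended sequence
\[
\tilde u_h:=\begin{cases} u_h & \text{in } Q_r,\\ \varphi_h^e & \text{in } B\setminus Q_r.\end{cases}
\]
Since the interior and exterior traces on $\partial Q_r$ coincide (both equal $\varphi_h$), no new jump is produced on $\partial Q_r$, so $\mathcal H^{n-1}(S_{\tilde u_h})=\mathcal H^{n-1}(S_{u_h})\leqslant 1/h$. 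The sequence $(\tilde u_h)$ is then uniformly bounded in $SBV^2(B)\cap L^\infty(B)$, so by Ambrosio's compactness theorem it converges (along a subsequence) in $L^1(B)$ to some $\tilde u\in SBV^2(B)$; by uniqueness of the $L^1$-limit, $\tilde u=u$ on $Q_r$ and $\tilde u=\varphi^e$ on $B\setminus Q_r$. Lower semicontinuity of the jump measure gives $\mathcal H^{n-1}(S_{\tilde u})\leqslant\liminf_h \mathcal H^{n-1}(S_{\tilde u_h})=0$, so $\tilde u\in H^1(B)$; in particular $u\in H^1(Q_r)$ with trace $\varphi$ on $\partial Q_r$. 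Finally, the weak $L^2$-lower semicontinuity of $\nabla\tilde u_h$ (together with the fact that the surface contributions in ${I}^{h,t}_{\varphi_h}(u_h)$ are nonnegative) yields $\int_{Q_r}|Du|^2\leqslant\liminf_h {I}^{h,t}_{\varphi_h}(u_h)$.

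The main obstacle is precisely this last argument: one has to combine the vanishing-jump constraint $\mathcal H^{n-1}(S_{u_h})\leqslant 1/h$ with the $SBV$-lower semicontinuity machinery in such a way that \emph{both} the Sobolev regularity of the limit and the correct boundary condition are produced. The chosen extension $\tilde u_h$ across $\partial Q_r$ is what encodes the boundary datum into the closure/lower-semicontinuity result; everything else is a routine application of Ambrosio's compactness and a truncation argument.
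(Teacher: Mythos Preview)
Your approach is essentially correct and is what a direct proof of the lemma looks like; note that the paper itself does not give a proof but simply records the result as a ``straightforward adaptation of \cite[Lemma 4.3]{SLD1}''. The limsup construction $u_h:=u+(\varphi_h^e-\varphi^e)$ and the liminf for $I_{\varphi_h}$ are exactly right.

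There is one point in the liminf for $I^{h,t}_{\varphi_h}$ that needs care: the sentence ``after truncation (both the functional and the convergence are stable under truncation) we may assume the $u_h$ are uniformly bounded in $L^\infty$'' is not quite justified as written. The functional $I^{h,t}_{\varphi_h}$ is \emph{not} stable under truncation, because truncating $u_h$ at level $m$ changes its trace from $\varphi_h$ to the truncated $\varphi_h^m$; if $\varphi_h$ is not uniformly bounded (and nothing in the hypotheses forces it to be), then extending the truncated $u_h^m$ by the original $\varphi_h^e$ creates a jump on $\partial Q_r$ of possibly full $\mathcal H^{n-1}$-measure, and the key estimate $\mathcal H^{n-1}(S_{\tilde u_h})\leqslant 1/h$ is lost.

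The fix is simply to reverse the order: first form the extension $\tilde u_h$ (no new jump on $\partial Q_r$ since inner and outer traces agree), observe that $\tilde u_h\to\tilde u$ in $L^1(B)$ with $\tilde u=u$ on $Q_r$ and $\tilde u=\varphi^e$ on $B\setminus Q_r$, and \emph{then} truncate $\tilde u_h$ at level $m$. The truncations $(\tilde u_h)^m$ are uniformly bounded in $L^\infty(B)$, satisfy $\mathcal H^{n-1}(S_{(\tilde u_h)^m})\leqslant \mathcal H^{n-1}(S_{\tilde u_h})\leqslant 1/h$, and have $\nabla(\tilde u_h)^m$ bounded in $L^2(B)$; Ambrosio's closure theorem then gives $(\tilde u)^m\in H^1(B)$ with a uniform gradient bound, and letting $m\to\infty$ yields $\tilde u\in H^1(B)$. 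This delivers both $u\in H^1(Q_r)$ with trace $\varphi$ and the desired lower semicontinuity of the Dirichlet term. With this reordering your argument is complete.
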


We now state and prove a technical result which is the heart of the proof of Theorem \ref{id:f_infty}.

\begin{theo}[Lower bound for $F_\varepsilon$]\label{ILVII}
Let $\xi\neq0$ and let $(u_{\varepsilon})\subset L^1(\Om)$ be a sequence such that $\sup_\e F_{\varepsilon}(u_{\varepsilon})<+\infty$ and $u_\e \to u_{\xi}$ in the sense of Definition \ref{def:conv}. Then
	\begin{equation}\label{FL4}
		\liminf_{\varepsilon \rightarrow 0}F_{\varepsilon}(u_{\varepsilon}) \geqslant \mathcal{L}^{n}(\Omega)|\xi|^{2}.
	\end{equation} 
\end{theo}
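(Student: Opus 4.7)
The thesis says that in the supercritical regime $\beta_\varepsilon/\varepsilon\to+\infty$ the family $F_\varepsilon$ is asymptotically coercive with constant $1$. My plan is to show that the jump set of $u_\varepsilon$ is negligibly small on most $\varepsilon$-cells (a non-trivial crack in a single inclusion being prohibitively expensive), so that $u_\varepsilon$ is essentially $H^1$ there; the bound $\mathcal{L}^n(\Omega)|\xi|^2$ then follows from weak $L^2$ lower semicontinuity of the Dirichlet functional applied to a suitable global modification $V_\varepsilon$ of $u_\varepsilon$ that still converges to $u_\xi$.

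\textbf{First I would} fix a threshold $\sigma_\varepsilon\to 0$ with $\sigma_\varepsilon\beta_\varepsilon/\varepsilon\to+\infty$ (possible by supercriticality; e.g.\ $\sigma_\varepsilon:=\sqrt{\varepsilon/\beta_\varepsilon}$), localise on a sub-cube $Q_R(x_0)\subset\subset\Omega$, partition it into $\varepsilon$-cells $Q_\varepsilon^k:=\varepsilon(k+Q)$, and call $Q_\varepsilon^k$ \emph{bad} if $\mathcal{H}^{n-1}(S_{u_\varepsilon}\cap Q_\varepsilon^k)>\sigma_\varepsilon\varepsilon^{n-1}$. The energy bound combined with the worst-case weight $\beta_\varepsilon$ gives $\#\{\textrm{bad}\}\leqslant C/(\beta_\varepsilon\sigma_\varepsilon\varepsilon^{n-1})$, so the total volume of bad cells is $O(\varepsilon/(\beta_\varepsilon\sigma_\varepsilon))=o(1)$. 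On each good cell I would rescale by $w_\varepsilon^k(y):=u_\varepsilon(\varepsilon(k+y))/\varepsilon$, $y\in Q$, obtaining $\mathcal{H}^{n-1}(S_{w_\varepsilon^k}\cap Q)\leqslant\sigma_\varepsilon$, and a change of variables would yield, for any fixed $t\geqslant 1$, $r\in(1/2,1)$, and $\varepsilon$ small enough that $t\leqslant 1/\varepsilon$ and $\beta_\varepsilon/\varepsilon\geqslant 1$, the one-cell comparison
$$\tfrac{1}{\varepsilon^n}F_\varepsilon(u_\varepsilon,Q_\varepsilon^k)\geqslant I^{1/\sigma_\varepsilon,\,t}_{\varphi_\varepsilon^k}\bigl(w_\varepsilon^k|_{Q_r}\bigr),$$
with $\varphi_\varepsilon^k:=w_\varepsilon^k|_{\partial Q_r}$ (the radius $r$ being selected by Fubini so that the trace is well defined in $H^{1/2}(\partial Q_r)$).

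\textbf{Next I would} combine Lemma \ref{GammaMin} with the Elimination Property (Theorem \ref{elimination}). Lemma \ref{GammaMin} says that as $\sigma_\varepsilon\to 0$, $I^{1/\sigma_\varepsilon,t}_{\varphi_\varepsilon^k}$ $\Gamma$-converges to the pure Dirichlet functional $I_{\varphi_\varepsilon^k}$, so its minimum is asymptotically the Dirichlet energy of the harmonic extension $h_\varepsilon^k$ of $\varphi_\varepsilon^k$; applying the Elimination Property to the Mumford--Shah local minimiser $v_\varepsilon^k$ on $Q_\varepsilon^k$ with Dirichlet datum inherited from $u_\varepsilon$ rules out any jump concentration in the interior and ensures that $v_\varepsilon^k$ is $H^1$ on $\varepsilon(k+Q_r)$, with energy comparable to the harmonic extension's. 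Replacing $u_\varepsilon$ on each inner cube $\varepsilon(k+Q_r)$ by $h_\varepsilon^k$ (and leaving $u_\varepsilon$ elsewhere) would produce a function $V_\varepsilon\in H^1(Q_R(x_0)\setminus B_\varepsilon)$, with $|B_\varepsilon|=o(1)$, satisfying $V_\varepsilon\to u_\xi$ in $L^1$ and
$$F_\varepsilon(u_\varepsilon,Q_R(x_0))\geqslant \int_{Q_R(x_0)\setminus B_\varepsilon}|\nabla V_\varepsilon|^2-o(1).$$
Weak $L^2$ lower semicontinuity of the Dirichlet integral, together with an exhaustion of $\Omega$ by such sub-cubes, then yields \eqref{FL4}.

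\textbf{The main obstacle} is the quantitative execution of the last two steps: one has to pick $r$ simultaneously for all good cells (Fubini averaging on the radius), control the traces $\varphi_\varepsilon^k$ modulo constants that match the affine profile of $u_\xi$ well enough for Lemma \ref{GammaMin} to pass into the sum over cells, and finally glue the harmonic extensions $h_\varepsilon^k$ into a global $H^1$ function whose $L^1$-limit remains $u_\xi$. The Elimination Property does the decisive work here: even though the jump constraint $\mathcal{H}^{n-1}(S_\cdot)\leqslant\sigma_\varepsilon$ in $I^{1/\sigma_\varepsilon,t}$ is only asymptotic, it forces the cell-wise minimiser to have a genuinely empty interior jump set, so the cell-wise Dirichlet lower bounds add up—without losing mass on $B_\varepsilon$—to the global bound $\mathcal{L}^n(\Omega)|\xi|^2$.
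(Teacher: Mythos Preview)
Your overall strategy---classifying $\varepsilon$-cells as good/bad by the size of the jump set, applying the Elimination Property on good cells, replacing $u_\varepsilon$ there by a harmonic extension, and concluding via weak lower semicontinuity of the Dirichlet integral---is precisely the route the paper takes. There is, however, a genuine gap in your execution of the harmonic comparison, and it is the point where the paper does the real work.

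You propose a vanishing threshold $\sigma_\varepsilon$ and a direct appeal to Lemma~\ref{GammaMin} cell by cell. But Lemma~\ref{GammaMin} is a $\Gamma$-convergence statement for a \emph{single} sequence of boundary data $\varphi_h\to\varphi$ in $H^{1/2}$; it gives no rate and no uniformity in $\varphi$. Your traces $\varphi_\varepsilon^k$ depend on both $\varepsilon$ and the cell index $k$ and do not converge to anything common, so you cannot sum $\sim\varepsilon^{-n}$ asymptotic cell-wise bounds into a global $o(1)$ error. The paper resolves this by working with a \emph{fixed} threshold $\vartheta^*(\eta)$, independent of $\varepsilon$ and the cell, and proving the uniform estimate $\int_{Q_{\rho_2}}|D\tilde v|^2\leqslant(1+\eta)\int_{Q_{\rho_2}}|\nabla\hat v|^2$ for \emph{every} local minimiser $\hat v\in\mathcal{M}^\varepsilon_{\vartheta^*}$. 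This is obtained by contradiction: a hypothetical sequence $\hat v_h\in\mathcal{M}^\varepsilon_{1/h}$ violating the bound has, after the normalisation $\int_{Q_{\rho_1,\rho_2}}\hat v_h=0$ and Poincar\'e--Wirtinger, traces compact in $H^{1/2}(\partial Q_{\rho_1})$, and \emph{then} Lemma~\ref{GammaMin} applies to that single sequence. Only once this uniform statement is in hand can one sum over cells and send $\eta\to 0$ at the very end.

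There is also a smaller issue with your Fubini selection of $r$: $w_\varepsilon^k$ is an $SBV$ function with small but nonzero jump, so there is no reason its trace on $\partial Q_r$ lies in $H^{1/2}$ for any $r$. The paper avoids this by first passing to the local minimiser $\hat v_\varepsilon$ and applying the Elimination Property in the annulus $Q_{1/2,\varsigma}$ (after the further rescaling $\hat w_\varepsilon=\sqrt{\beta_\varepsilon}\,\hat v_\varepsilon$, which turns $\mathcal{F}_\varepsilon$ on the annulus into the standard Mumford--Shah functional); this forces the jump set to retract into a $\beta_\varepsilon^{1/(n-1)}$-neighbourhood of $\partial Q_{1/2}\cup\partial Q_\varsigma$, so $\hat v_\varepsilon\in H^1(Q_{\rho_1,\rho_2})$ on a \emph{fixed} annulus and the trace on $\partial Q_{\rho_1}$ is genuinely in $H^{1/2}$.
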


\begin{proof} 
Assume that $F_\e(u_\e)\leqslant c$. The proof strategy consists of replacing the sequence $u_{\varepsilon}$ with an \textit{improved} sequence $w_{\varepsilon}$ (in a sense that will be clarified below) which converges to $u_{\xi}$ strongly in $L^{1}(\Omega)$, and whose energy is asymptotically smaller than $F_\e(u_\e)$. 

Since the energy $F_\e$ decreases by truncations, we can truncate the sequence $(u_\e)$ at level $\|u_\xi\|_{L^\infty(\Om)}$ and preserve both the uniform bound on $F_\e(u_\e)$ and the convergence of $(u_\e)$ to $u_\xi$. Hence in what follows we assume that $\|u_\e\|_{L^\infty(\Om)}\leq \|u_\xi\|_{L^\infty(\Om)}$. 

\smallskip
 
As an initial step we rewrite $\Omega$ as
	\begin{equation*}
		\Omega 
		= \Bigg(\bigcup\limits_{k\in \mathcal{I}^1_\e}Q^{k,\varepsilon}\Bigg) 
		\cup \Bigg(\bigcup\limits_{k\in \mathcal{I}_\e^2}Q^{k,\varepsilon}\cap \Om\Bigg),
	\end{equation*} 
where $Q^{k,\varepsilon}:= \varepsilon(Q + k)$, $\mathcal{I}^1_\e:=\{z\in \Z^n: Q^{k,\varepsilon}\subset \Om\}$, and $\mathcal{I}^2_\e:=\{z\in \Z^n: Q^{k,\varepsilon}\cap \partial \Om\neq \emptyset\}$.
Clearly  
	\begin{equation*}
		F_{\varepsilon}(u_{\varepsilon})
		= \sum_{k\in\mathcal{I}^1_\e} F_{\varepsilon}(u_{\varepsilon},Q^{k,\varepsilon})
		+ \sum_{k\in\mathcal{I}^2_\e}
		F_{\varepsilon}(u_{\varepsilon},Q^{k,\varepsilon}\cap\Omega).
	\end{equation*}
	
	\medskip
	
\noindent\textit{Step 1: Classification of the interior cubes.}
We estimate the energy in $Q^{k,\varepsilon}$, for $k \in \mathcal{I}^1_\e$, namely
	\begin{equation*}
		F_{\varepsilon}(u_{\varepsilon}, Q^{k,\varepsilon})
		= \int_{Q^{k,\varepsilon}}|\nabla u_{\varepsilon}|^{2} \ dx
		+ \mathcal{H}^{n-1}(S_{u_{\varepsilon}} \cap Q^{k,\varepsilon}_{\frac{1}{2},1}) 
		+ \beta_{\varepsilon}
		\mathcal{H}^{n-1}(S_{u_{\varepsilon}} \cap Q^{k,\varepsilon}_{\frac{1}{2}}).
	\end{equation*}
For $y\in Q^{k}$ set $v_{\varepsilon}(y):=(\sqrt{\beta_{\varepsilon}\varepsilon})^{-1} u_{\varepsilon}(\varepsilon y)$; by changing variables we have
	\begin{align*}
		F_{\varepsilon}(u_{\varepsilon}, Q^{k,\varepsilon})
		&= \beta_{\varepsilon}\varepsilon^{n-1}\bigg(\int_{Q^{k}}|\nabla v_{\varepsilon}|^{2} \ dy
		+ \dfrac{1}{\beta_{\varepsilon}}
		\mathcal{H}^{n-1}(S_{v_{\varepsilon}} \cap Q^{k}_{\frac{1}{2},1}) 
		+ \mathcal{H}^{n-1}(S_{v_{\varepsilon}} \cap Q^{k}_{\frac{1}{2}})\bigg) \nonumber\\
		&=: \beta_{\varepsilon}\varepsilon^{n-1}
		\mathcal{F}_{\varepsilon}(v_{\varepsilon},Q^{k}).
		%\label{CV}
	\end{align*}
Let $\vartheta>0$ be a fixed constant, and let $\varepsilon>0$ be fixed. We call $Q^{k}$ a \textit{good} cube if 
	\begin{equation}
		\mathcal{F}_{\varepsilon}(v_{\varepsilon},Q^{k}) \leqslant C \quad \textrm{for some } C>0 \quad \textrm{and }  \quad 
		 \mathcal{H}^{n-1}(S_{v_{\varepsilon}} \cap Q^{k}) \leqslant \vartheta,
		\label{GC}
	\end{equation}
	namely if both the energy of $v_\e$ and the total measure of the jump of $v_\e$ are bounded in the cube. Otherwise, we say that $Q^{k}$ is a \textit{bad} cube.
Let $\mathcal{I}_\e^{1,g}, \mathcal{I}^{1,b}_\e \subset \mathcal{I}^1_\e$ denote the set of internal good and bad cubes, respectively; we denote with $N_{\e}^{g}$ and $N_{\e}^{b}$ their cardinalities.

We can easily estimate the number of bad cubes, by using the fact that either one of the conditions \eqref{GC} is not satisfied. Namely, if the first condition in \eqref{GC} is not satisfied, then 
$$
c \geqslant F_{\varepsilon}(u_{\varepsilon}) \geqslant\beta_{\varepsilon}\varepsilon^{n-1}\sum_{k\in\mathcal{I}_\e^{1,b}}  \mathcal{F}_{\varepsilon}(v_{\varepsilon},Q^{k}) \geqslant C \beta_{\varepsilon}\varepsilon^{n-1} N_{\e}^{b}.
$$			
Similarly, if the second condition in \eqref{GC} is not satisfied, then 
	\begin{equation*}
	c \geqslant \beta_{\varepsilon}\varepsilon^{n-1}N_{\e}^{b}\vartheta.
	\end{equation*}
Hence, we have the bound 
	\begin{equation}\label{num_bc}
		N_{\e}^{b}
		\leqslant  \dfrac{C(\vartheta)}{\beta_\e\varepsilon^{n-1}}.
	\end{equation}
\medskip

\noindent\textit{Step 2: Energy estimate on the good cubes.} This is the most delicate part of the proof, and is split into a number of sub-steps.
\smallskip

\noindent
\textit{Step 2.1: Elimination property.}
Let $Q^{k}$, for $k\in \mathcal{I}_\e^{1,g}$, be an arbitrary good cube in the sense of \eqref{GC}. 
We omit now the superscript $k$ for the sake of notation. 
Let $\frac{1}{2}<\varsigma<1$; for fixed $\varepsilon>0$ consider the following (local) minimisation problem:
	\begin{align*}
		\textrm{(LMS)}_\e \
		 \text{loc min}\bigg\{
		\mathcal{F}_{\varepsilon}(v, Q_{\varsigma}):  v \in SBV^{2}(Q_{\varsigma}),
		\mathcal{F}_{\varepsilon}(v, Q_{\varsigma})
		\leqslant C \ \textrm{for some } C>0,  \mathcal{H}^{n-1}(S_{v} \cap Q_{\varsigma}) 
		\leqslant
		\vartheta
		\bigg\}.
	\end{align*}
Let $\mathcal{M}^\e_{\vartheta}$ denote the class of solutions of $\textrm{(LMS)}_\e$, and let $\hat{v}_{\varepsilon}\in \mathcal{M}^\e_{\vartheta}$. We recall that, following \cite[Definition 6.6]{AFP}, for every open set $A\subset \subset  Q_{\varsigma}$ we have that $\mathcal{F}_{\varepsilon}(\hat v_\e, A) \leq \mathcal{F}_{\varepsilon}(v, A)$, whenever $\{\hat v_\e \neq v\}\subset \subset A$.

With no loss of generality we can assume that $\hat{v}_{\varepsilon} $ is bounded in $L^\infty(Q_\varsigma)$ (with a possibly $\e$-dependent constant) as $v_\e$, since the energy $\mathcal{F}_\e$ decreases by truncations. We observe that $\hat{v}_{\varepsilon}$ is also a local minimiser of $\mathcal{F}_{\varepsilon}(\cdot,Q_{\frac{1}{2},\varsigma})$.

By setting $\hat{w}_{\varepsilon}(y) := \sqrt{\beta_{\varepsilon}}\hat{v}_{\varepsilon}(y)$, we have
	\begin{equation*}
		\mathcal{F}_\e(\hat{v}_{\varepsilon},Q_{\frac{1}{2},\varsigma})
		= \dfrac{1}{\beta_{\varepsilon}}\bigg(
		\int_{Q_{\frac{1}{2},\varsigma}}|\nabla \hat{w}_{\varepsilon}|^{2} dy
		+ \mathcal{H}^{n-1}(S_{\hat{w}_{\varepsilon}} \cap Q_{\frac{1}{2},\varsigma})\bigg),
	\end{equation*}
and 
\begin{equation*}
		\mathcal{H}^{n-1}(S_{\hat{v}_{\varepsilon}} \cap Q_{\frac{1}{2},\varsigma})
		=\mathcal{H}^{n-1}(S_{\hat{w}_{\varepsilon}} \cap Q_{\frac{1}{2},\varsigma}) 
		\leqslant C \beta_{\varepsilon}.
	\end{equation*}
Hence the function $\hat{w}_{\varepsilon}$ is a local minimiser of the Mumford-Shah functional 
in $Q_{\frac{1}{2},\varsigma}$, and its jump set has small Hausdorff measure. We then apply Theorem \ref{elimination} to $\hat w_\e$;
namely, we fix $x_{0} \in Q_{\frac{1}{2},\varsigma}$ and consider a ball $B_{\rho(\varepsilon)}(x_{0})\subset\subset Q_{\frac{1}{2},\varsigma}$, where $\rho(\varepsilon)$ is such that
	\begin{equation}\label{elimination-radius}
		\mathcal{H}^{n-1}(S_{\hat{w}_{\varepsilon}} \cap B_{\rho(\varepsilon)}(x_{0}))
		\leqslant C\beta_{\varepsilon}
		\leqslant \theta\rho(\varepsilon)^{n-1}
	\end{equation}
is satisfied for $\varepsilon>0$ fixed, where $\theta$ is the elimination constant in Theorem \ref{elimination} (note that $\rho(\e)$ can be very small, of the order of $(\beta_{\varepsilon})^{\frac{1}{n-1}}$). Then Theorem \ref{elimination} guarantees that
	\begin{equation*}
		S_{\hat w_\e}\cap B_{\frac{\rho(\varepsilon)}{2}}(x_{0}) = \emptyset.
	\end{equation*}
This same argument can be repeated for every $x \in Q_{\frac{1}{2},\varsigma}$ with any radius $\rho(\varepsilon)>0$ satisfying \eqref{elimination-radius}. 
In this way we conclude that the jump of $\hat{w}_{\varepsilon}$ has to be contained in a neighbourhood of $\partial Q_{\frac{1}{2},\varsigma}$ of order 
$ (\beta_{\varepsilon})^{\frac{1}{n-1}}$. In particular, there exist $\frac{1}{2}<\rho_1<\rho_2<\varsigma$ such that $Q_{\rho_1,\rho_2}\subset\subset Q_{\frac{1}{2},\varsigma}$ and $S_{\hat w_\e}\cap \overline{Q}_{\rho_1,\rho_2} = \emptyset$.

From the definition of $\hat w_\e$ we deduce that, for any $\hat v_\e \in \mathcal{M}^\e_{\vartheta}$ (and for sufficiently small $\e$)
	\begin{equation*}
		\mathcal{H}^{n-1}(S_{\hat{v}_{\varepsilon}} \cap \overline{Q}_{\rho_1,\rho_2})
		= 0,
	\end{equation*}
hence $\hat{v}_{\varepsilon} \in H^{1}(Q_{\rho_1,\rho_2})$, and $S_{\hat{v}_{\varepsilon}} \subseteq Q_{\rho_1}\cup Q_{\rho_2,\varsigma}$.
Moreover, since the Mumford-Shah functional is invariant under translations, we can assume with no loss of generality that any local minimiser $\hat v_\e \in \mathcal{M}^\e_{\vartheta}$ satisfies
	\begin{equation}\label{normalisation}
		\int_{Q_{\rho_1,\rho_2}} \hat{v}_\e \dx = 0.
	\end{equation} 	 
\textit{Step 2.2: Comparison between $\hat v_\e$ and its harmonic extension.} For a given $\hat{v}_{\varepsilon} \in \mathcal{M}_{\vartheta}^{\varepsilon}$ we define the function 
$\tilde{v}_{\varepsilon} \in H^{1}(Q_{\rho_2})$ as the solution of the following Dirichlet problem for the Laplace equation
	\begin{equation*}
		\textrm{(Dir)}  \
		\begin{cases}
		\Delta w = 0  &\text{ in } Q_{\rho_1},  \\
		w = \hat{v}_{\varepsilon}   &\text{ in } \overline{Q}_{\rho_1,\rho_2}.
		\end{cases}
	\end{equation*}
Throughout this sub-step we simply write $\hat v$ and $\tilde v$ instead of $\hat v_\e$ and $\tilde v_\e$.

We now claim that for every $\eta>0$ there exists $\vartheta^\ast=\vartheta^\ast(\eta)>0$ such that for every $\hat{v} \in \mathcal{M}^\e_{\vartheta^\ast}$ and every corresponding $\tilde{v}$ as in (Dir) we have 
	\begin{equation}
		\int_{Q_{\rho_2}}|D \tilde{v}|^{2} \dx \leqslant (1+\eta) \int_{Q_{\rho_2}} |\nabla \hat{v}|^{2} \dx.
		\label{FI}
	\end{equation}
We note that the claim is true for constant $\tilde v$. Therefore we only need to prove  \eqref{FI} when $\tilde v$ is not constant. Arguing by contradiction we assume that there exists $\eta>0$ such that for every $h\in \N$ there are $\hat{v}_{h} \in \mathcal{M}^\e_{1/h}$ and $\tilde{v}_{h}$ defined as in (Dir) satisfying 
	\begin{equation*}
	\int_{Q_{\rho_2}}|D \tilde{v}_{h}|^{2} \dx > (1+\eta) \int_{Q_{\rho_2}} |\nabla \hat{v}_{h}|^{2} \dx.
	\end{equation*}
Using that $\hat{v}_{h} = \tilde{v}_{h}$ in $Q_{\rho_1,\rho_2}$, and that $\hat v^h \in H^{1}(Q_{\rho_1,\rho_2})$ by Step 2.1, the previous estimate gives
	\begin{equation}
	\int_{Q_{\rho_1}}|D \tilde{v}_{h}|^{2} \dx>
		(1+\eta) \int_{Q_{\rho_1}} |\nabla \hat{v}_{h}|^{2} \dx
		+ \eta \int_{Q_{\rho_1,\rho_2}} |D \hat{v}_{h}|^{2} \dx
	\label{contradiction}
	\end{equation}
From the normalisation condition in \eqref{normalisation} and the energy bound in $\textrm{(LMS)}_\e$ satisfied by $\hat v_\e$, we can apply the Poincar\'{e}-Wirtinger inequality to deduce that there exists a constant $C>0$ (independent of $h$) such that $\|\hat{v}_{h}\|_{H^{1}(Q_{\rho_1,\rho_2})} \leqslant C$. Therefore $\hat{v}_{h}$ converges weakly in $H^1(Q_{\rho_1,\rho_2})$, hence in particular  
	\begin{equation}
		\varphi_{h}:= (\hat{v}_{h})_{|\partial Q_{\rho_1}} \rightarrow \varphi
		\text{ strongly in } H^{1/2}(\partial Q_{\rho_1}), 
		\label{TraceCV}
	\end{equation}
for some $\varphi \in H^{1/2}(\partial Q_{\rho_1})$. 
Moreover, $(\tilde{v}_{h})_{|\partial Q_{\rho_1}}= \varphi_{h}$. Then, since 
$$
\|\tilde v_h\|_{H^1(Q_{\rho_1})} \leqslant C\|\varphi_h\|_{H^{1/2}(\partial Q_{\rho_1})},
$$
we immediately deduce that $\tilde v_h$ is uniformly bounded in $H^1(Q_{\rho_1})$.  

	We now apply Lemma \ref{GammaMin} with $r=\rho_1$, $t=\frac1{\beta_\e}$ and the functions $\varphi_h$ and $\varphi$ defined in \eqref{TraceCV}. By the fundamental theorem of $\Gamma$-convergence, the sequence $\tilde{v}_{h}$, which is a compact sequence of minimisers for the functionals $I_{\varphi_{h}}$, converges weakly in $H^1(Q_{\rho_1})$  to the unique minimiser of $I_{\varphi}$, which we denote with $\tilde{v}$. 
Furthermore we have convergence of the corresponding minimum values:
	\begin{align*}
		\lim_{h \rightarrow +\infty} \int_{Q_{\rho_1}}|D \tilde{v}_h|^{2} \dx
		&= \lim_{h \rightarrow +\infty} 
		I_{\varphi_{h}}(\tilde{v}_{h})
		= \lim_{h \rightarrow +\infty} 
		\inf_{v \in H^1(Q_{\rho_1})} {I}_{\varphi_{h}}(v)\\
		&= \min_{v \in H^{1}(Q_{\rho_1})} I_{\varphi}(v)
		= \int_{Q_{\rho_1}}|D \tilde{v}|^{2} \dx. 
	\end{align*}	
Similarly, the sequence $\hat{v}_{h} \in \mathcal{M}^\e_{1/h}$, which is a compact sequence of minimisers of the functionals $I_{\varphi_{h}}^{h,t}$, converges in $L^1(Q_{\rho_1 })$ to the unique minimiser of $I_{\varphi}$\ie to $\tilde v$. 
Furthermore we have convergence of the minimum values:
	\begin{equation*}
		\lim_{h \rightarrow +\infty} 
		I_{\varphi_{h}}^{h,t}(\hat{v}_{h})
		= \lim\limits_{h \rightarrow +\infty} 
		\inf_{v \in \mathcal{M}_{1/h}^{\varepsilon}} {I}^{h,t}_{\varphi_{h}}(v)
		= \min_{v \in H^{1}(Q_{\rho_1})} I_{\varphi}(v)
		= \int_{Q_{\rho_1}}|D \tilde{v}|^{2} \dx. 
	\end{equation*}	
On the other hand, we clearly have, by the definition of good cubes  \eqref{GC}, with $\vartheta=\frac1h$, that 
	\begin{equation*}
		\lim_{h \rightarrow +\infty} 
		{I}^{h,t}_{\varphi_{h}}(\hat{v}_{h})
		= \lim_{h \rightarrow +\infty} 
		\int_{Q_{\rho_1}}|\nabla \hat{v}_{h}|^{2} \dx,
	\end{equation*}
and hence 
$$
\lim_{h \rightarrow +\infty} \int_{Q_{\rho_1}}|\nabla \hat{v}_{h}|^{2} \dx = \int_{Q_{\rho_1}}|D \tilde{v}|^{2} \dx.
$$
By passing to the limit in \eqref{contradiction} we then have in particular that
\begin{equation*}
(1+\eta) \int_{Q_{\rho_1}}|D \tilde{v}|^{2} \dx
		\leq \int_{Q_{\rho_1}}|D \tilde{v}|^{2} \dx,
\end{equation*}	
which gives a contradiction since $D\tilde u\not \equiv 0$ and $\eta>0$, and thus proves \eqref{FI}. 

\medskip

In view of  \eqref{FI}, in what follows we will choose $\vartheta=\vartheta^\ast$ in the definition of good cubes \eqref{GC}.

\medskip

\noindent \textit{Step 2.3: Energy bound on the good cubes.} Let $\vartheta^\ast$ be as in Step 2.2; we consider the following minimisation problems
\begin{align*}
		\textrm{(MS)}_\e 
		\ \min\bigg\{
		& \mathcal{F}_\e(v, Q_{\rho_2}):  \ v \in SBV^{2}(Q_{\rho_2}), \ S_{v} \subset Q_{\rho_1}, \mathcal{H}^{n-1}(S_{v} \cap Q_{\rho_2}) \leqslant
		\vartheta^\ast,
		\\  
		& \hspace{2cm}
		\mathcal{F}_{\varepsilon}(v, Q_{\rho_2})
		\leqslant C \ \textrm{ for some } C>0, \ v = v_{\varepsilon}\ \text{ on } 
		\partial Q_{\rho_2}
		\bigg\},
	\end{align*}
	where $\frac{1}{2}<\rho_1<\rho_2<1$.
For a minimiser $\hat{v}_{\varepsilon}$ of $\textrm{(MS)}_\e $, let $\tilde{v}_{\varepsilon}$ be the corresponding function as in (Dir). Since $\hat{v}_{\varepsilon}$ is also a local minimiser of the same functional, from \eqref{FI} 
we have
	\begin{equation*}
		\int_{Q_{\rho_2}}|D \tilde{v}_\e|^{2} \dx \leqslant (1+\eta) \int_{Q_{\rho_2}} |\nabla \hat{v}_\e|^{2} \dx.
	\end{equation*}
Then for the sequence $(v_\e)$ defined at the beginning of Step 1 we have    
	\begin{align*}
		\mathcal F(v_\e,Q_{\rho_2}) &= \int_{Q_{\rho_2}}|\nabla v_{\varepsilon}|^{2} \ dy 
		+ \dfrac{1}{\beta_{\varepsilon}} 
		\mathcal{H}^{n-1}\left(S_{v_{\varepsilon}} \cap Q_{\frac{1}{2},\rho_2} \right) 
		+ \mathcal{H}^{n-1}\left(S_{v_{\varepsilon}} \cap Q_{\frac{1}{2}}\right) \\
		&\geqslant \int_{Q_{\rho_2}}|\nabla \hat{v}_{\varepsilon}|^{2} \ dy 
		+ \dfrac{1}{\beta_{\varepsilon}} 
		\mathcal{H}^{n-1}\left(S_{\hat{v}_{\varepsilon}} \cap Q_{\frac{1}{2},\rho_2} \right) 
		+ \mathcal{H}^{n-1}\left(S_{\hat{v}_{\varepsilon}} \cap Q_{\frac{1}{2}}\right) \\
		&\geqslant \left(1 - \dfrac{\eta}{1+\eta}\right)\int_{Q_{\rho_2}}|D \tilde{v}_{\varepsilon}|^{2} \ dy.  		 
	\end{align*}
By the change of variables 
$\tilde{u}_{\varepsilon}(\varepsilon y) := \sqrt{\beta_{\varepsilon}\varepsilon}\tilde{v}_{\varepsilon}(y)$, we get
	\begin{align*}
		F(u_\e,Q^\e_{\rho_2}) = & \int_{Q_{\rho_2}^{\varepsilon}}|\nabla u_{\varepsilon}|^{2} \dx 
		+ \mathcal{H}^{n-1}\left(S_{u_{\varepsilon}} \cap Q_{\frac{1}{2},\rho_2}^{\varepsilon} \right) 
		+ \beta_{\varepsilon}
		\mathcal{H}^{n-1}\left(S_{u_{\varepsilon}} \cap Q_{\frac{1}{2}}^{\varepsilon} \right) \\ 
		&\geqslant \left(1 - \dfrac{\eta}{1+\eta}\right)\int_{Q_{\rho_2}^{\varepsilon}}
		|D \tilde{u}_{\varepsilon}|^{2} \dx.
	\end{align*}
Hence, for every $k\in \mathcal{I}_\e^{1,g}$, 
	\begin{equation}
		F_{\varepsilon}(u_{\varepsilon},Q_{\rho_2}^{k,\varepsilon})
		\geqslant \left(1 - \dfrac{\eta}{1+\eta}\right)\int_{Q_{\rho_2}^{k,\varepsilon}}
		|D \tilde{u}_{k,\varepsilon}|^{2} \ dx,
		\label{GCubes}
	\end{equation}
where the subscript $k$ has now been added to highlight the dependence of the construction of $\tilde{u}_{k,\varepsilon}$ on the cube $Q^{k,\varepsilon}$.

\medskip

\noindent \textit{Step 3: Energy estimate on the bad cubes and on the boundary cubes.} Let now $k\in \mathcal{I}_\e^{1,b}\cup \mathcal{I}_\e^{2}$. 
We bound the energy of $u_\e$ on $Q^{k,\varepsilon}\cap \Om$ as
	\begin{equation}
		F_{\varepsilon}(u^{\varepsilon},Q^{k,\varepsilon}\cap \Om)
		\geqslant \widehat{F}_\e(\hat{u}_{k,\varepsilon},Q^{k,\varepsilon}\cap \Om),
		\label{BCubes}
	\end{equation}
where $\widehat{F}_\e$ is defined as in \eqref{widehatFe} and $\hat{u}_{k,\varepsilon} : Q^{k,\varepsilon}\cap \Om \rightarrow \mathbb{R}$ is defined as
	\begin{equation}\label{uhatk}
		\hat{u}_{k,\varepsilon}:= T_\e\left((u_{\varepsilon})_{|(Q^{k,\varepsilon} \cap \varepsilon P)\cap\Om}\right),
	\end{equation}
where $T_\e$ denotes the extension operator provided by \cite[Theorem 1.1]{CS1}. 
 	
	\medskip

\noindent \textit{Step 4: Construction of an improved sequence converging to $u_\xi$.} We define the sequence $w_{\varepsilon}:\Omega\to\R$ as
	\begin{equation*}
		w_{\varepsilon} := 
		\begin{cases} 
		\smallskip
		\displaystyle
		\tilde{u}_{k,\varepsilon}
		& \mbox{ in } Q_{\rho_2}^{k,\varepsilon}, \  k\in\mathcal{I}^{1,g}_\varepsilon, \\
		\smallskip
		u_{\varepsilon} 
		& \mbox{ in } Q^{k,\varepsilon}_{\rho_2,1}, \  k\in\mathcal{I}^{1,g}_\varepsilon,\\
		\hat{u}_{k,\varepsilon} 
		& \mbox{ in } Q^{k,\varepsilon}\cap \Om, \  k\in\mathcal{I}^{1,b}_\varepsilon\cup \mathcal{I}^{2}_\varepsilon.
		\end{cases}
	\end{equation*}
Clearly $w_\e\in SBV^2(\Om)$; we now show that $w_\e$ converges to $u_\xi$ strongly in $L^1(\Om)$.

As a first step we show that $\sup_{\e}MS(w_{\varepsilon})<+\infty$. 
\smallskip

\noindent
From \eqref{GCubes} applied to the good cubes, we have that for every $\varepsilon >0$
	\begin{align*}
		MS\Bigg(w_{\varepsilon}, \bigcup_{k\in\mathcal{I}^{1,g}_\varepsilon} Q^{k,\varepsilon}\Bigg) = 
		\sum_{k\in\mathcal{I}^{1,g}_\varepsilon} \bigg(
		\int_{Q_{\rho_2}^{k,\varepsilon}}|D \tilde{u}_{k,\varepsilon}|^{2} dx +
		MS\big({u}_{\varepsilon}, Q^{k,\varepsilon}_{\rho_2,1}\big)\bigg)
		\leqslant (2+\eta) F_{\varepsilon}(u_{\varepsilon}).
	\end{align*}
For the bad cubes, by \eqref{uhatk} and \eqref{num_bc} we have, for $\varepsilon >0$, 
	\begin{align*}
	MS\Bigg(w_{\varepsilon}, \bigcup_{k\in\mathcal{I}^{1,b}_\varepsilon} Q^{k,\varepsilon}\Bigg) =
	\sum_{k\in\mathcal{I}^{1,b}_\varepsilon} MS\big(\hat{u}_{k,\varepsilon}, Q^{k,\varepsilon}\big)
		\leqslant C\sum_{k\in\mathcal{I}^{1,b}_\varepsilon} MS\big(u_{\varepsilon}, Q^{k,\varepsilon}_{\frac{1}{2},1}\big)
		\leqslant F_{\varepsilon}(u_{\varepsilon}),
	\end{align*}
where $C_{n}$ is the perimeter of $Q \setminus  P$ in $Q$. Similarly, for the boundary cubes we have 
\begin{align*}
	MS\Bigg(w_{\varepsilon}, \bigcup_{k\in\mathcal{I}^{2}_\varepsilon} \big(Q^{k,\varepsilon}\cap \Om\big)\Bigg) 
		&\leqslant \sum_{k\in\mathcal{I}^{2}_\varepsilon} MS\big(u_{\varepsilon}, Q^{k,\varepsilon}_{\frac{1}{2},1}\cap \Om\big)
		+ \mathcal{H}^{n-1}(\partial \Omega)
		\leqslant F_{\varepsilon}(u_{\varepsilon}) + C.
	\end{align*}
Eventually, since $F_{\varepsilon}(u_\e)\leqslant C$ for every $\e>0$, we get the desired uniform bound on $MS(w_{\varepsilon})$.

Moreover, since $\|u_{\varepsilon}\|_{L^{\infty}(\Om)}\leqslant \|u_{\xi}\|_{L^{\infty}(\Om)}$ the function $w_\e$ above can be constructed in a way such that $\|w_{\varepsilon}\|_{L^{\infty}(\Om)}\leqslant \|u_{\xi}\|_{L^{\infty}(\Om)}$. Hence, we can apply the compactness result \cite[Theorem 4.8]{AFP} to the sequence $(w_\e)$ to deduce the existence of $w \in SBV^2(\Omega)$ such that (up to a subsequence not relabelled) $w_\e$ converges to $w$ weakly$^*$ in $BV(\Om)$ and hence strongly in $L^1(\Om)$.
\smallskip

It remains to show that $w=u_\xi$. We observe that 
$$
w_\e = u_\e \ \textrm{ in } \ A_{\rho_2}^\e:= \bigcup_{k\in \mathcal{I}_\e^1\cup  \mathcal{I}_\e^2} Q_{\rho_{2},1}^{k,\varepsilon} \cap \Omega,
$$
and $\chi_{A_{\rho_2}^\e} \rightharpoonup C(\rho_2)$ weakly$^{*}$ in $L^\infty(\Om)$, for some constant $0<C(\rho_2)<1$.  Moreover by assumption $u_{\varepsilon} \rightarrow u_\xi$ in the sense of Definition \ref{def:conv} so that there exists a sequence $(\tilde u_\e) \subset L^1(\Omega)$ such that $\tilde u_\e = u_\e$ in $\Omega \cap \e P$, and $\tilde u_\e$ converges to $u_\xi$ strongly in $L^1(\Omega)$. Hence, since $A_{\rho_2}^\e\subset \Omega \cap \e P$,
$$
0= \int_{A_{\rho_2}^\e}|\tilde u_\e - w_\e| dx = \int_{\Om}|\tilde u_\e - w_\e|\chi_{A_{\rho_2}^\e} dx \to C(\rho_2) \int_{\Om}|u_\xi - w| dx, 
$$
as $\e\to 0$. Since $C(\rho_2)>0$ then necessarily $w=u_\xi$.

\medskip

\noindent \textit{Step 5: Energy estimate for $w_\e$.} First of all, from \eqref{GCubes} and \eqref{BCubes} we have
	\begin{equation}\label{F_e}
		F_{\varepsilon}(u_{\varepsilon}) 
		\geqslant \bigg(1 - \dfrac{\eta}{1 + \eta}\bigg) \int_{\Omega}\phi_\e(x)
		|\nabla w_{\varepsilon}|^{2} dx,
	\end{equation}
where 
\begin{equation*}
\phi_\e(x):= 
\begin{cases}
0 & \quad \textrm{if } x\in Q^{k,\varepsilon}_{\frac{1}{2}}\cap \Om, \  k\in\mathcal{I}^{1,b}_\varepsilon\cup \mathcal{I}^{2}_\varepsilon,\\
1 & \quad \textrm{otherwise in } \Omega.
\end{cases}
\end{equation*}
Note that $\phi_\e \to 1$ in measure as $\e\to 0$, by \eqref{num_bc} and since the cardinality of $\mathcal{I}^{2}_\varepsilon$ is of order $\e^{1-n}$.
\medskip

By the previous step and by \cite[Theorem 4.7]{AFP} we have
\begin{equation*}
		\liminf_{\varepsilon \rightarrow 0}
		\mathcal{H}^{n-1}(S_{w_{\varepsilon}} \cap \Omega)
		\geqslant \mathcal{H}^{n-1}(S_{u_{\xi}} \cap \Omega) =0,
	\end{equation*}
and since $\nabla w_{\varepsilon}$ converges to $\xi$ weakly in $L^{2}(\Om)$, and $\phi_\e \to 1$ in measure, we conclude that
	\begin{equation}
		\liminf_{\varepsilon \rightarrow 0}
		\int_{\Omega}\phi_\e(x)|\nabla w_{\varepsilon}|^{2} dx 
		\geqslant \int_{\Omega}|D u_{\xi}|^{2} dx = \mathcal{L}^n(\Om) |\xi|^2.
		\label{LSC}
	\end{equation}
Eventually passing to the liminf in \eqref{F_e} and appealing to \eqref{LSC} gives
	\begin{equation*}
		\liminf_{\varepsilon \rightarrow 0} 
		F_{\varepsilon}(u_{\varepsilon})
		\geqslant \bigg(1 - \dfrac{\eta}{1 + \eta}\bigg) 
		\mathcal{L}^n(\Om) |\xi|^2.
	\end{equation*}
Finally, by letting $\eta \rightarrow0^+$ we deduce that
	\begin{equation*}
		\liminf_{\varepsilon \rightarrow 0} F_{\varepsilon}(u_{\varepsilon}) 
		\geqslant \mathcal{L}^{n}(\Omega)|\xi|^{2},
	\end{equation*}
which concludes the proof.
\end{proof}

We are now ready to prove Theorem \ref{id:f_infty}.

\begin{proof}[Proof of Theorem \ref{id:f_infty}] Lemma \ref{ILV1} gives $
		f^\infty_{\rm{hom}}(\xi) 
		\leqslant |\xi|^{2}$ for every $\xi\in \R^n$, hence it only remains to prove the opposite inequality. 
		By $\Gamma$-convergence we have that there exists a sequence $(u_\e)$ converging to $u_{\xi}$ in the sense of Definition \ref{def:conv} such that
$$
\mathcal{L}^{n}(\Omega) f^\infty_{\rm hom}(\xi) = F^\infty_{\rm hom}(u_\xi)=\lim_{\e\to 0} F_\e (u_\e),
$$
hence the desired inequality immediately follows from \eqref{FL4}.  		
\end{proof}

\bigskip

\subsection{Critical case: $\ell \in (0,+\infty)$.} 
We start by proving a simple result, which is essentially a corollary of Lemma \ref{PL1}. Then the main result of this section is Corollary \ref{coro:nq} which asserts that the homogenized  volume integrand $f_{\rm{hom}}^\ell$ is \emph{not} $2$-homogeneous, unlike the extreme cases $\ell=0$ and $\ell=+\infty$, and unlike the volume integrand of the functionals $F_\e$.

\begin{lemma}\label{l_quadr:hat}
Let $\ell\in (0,+\infty)$, and let $f_{\rm{hom}}^\ell$ be as in Theorem \ref{t:int-rep}. Then $f^\ell_{\rm{hom}}$ is $2$-homogeneous if and only if $f^\ell_{\rm{hom}}(\xi) = \hat f(\xi)$ for every $\xi \in \R^n$.
\end{lemma}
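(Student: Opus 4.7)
The ``if'' direction is essentially free: the quadratic form $\hat f$ defined in \eqref{F} is obviously $2$-homogeneous (rescale both $\xi$ and the competitor $w$), so the identity $f^\ell_{\rm{hom}}=\hat f$ immediately transfers $2$-homogeneity to $f^\ell_{\rm{hom}}$.

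For the ``only if'' direction the plan is to exploit the two-sided bound already at our disposal, namely \eqref{ILV1} from Lemma \ref{PL1}:
\begin{equation*}
\hat f(\xi)\leqslant f^\ell_{\rm{hom}}(\xi)\leqslant \hat f(\xi)+C\ell
\end{equation*}
(I discard the $|\xi|^2$ term in the min, keeping only the $\hat f(\xi)+C\ell$ piece). The crucial observation is that the error $C\ell$ is additive and independent of $\xi$, while $\hat f$ is $2$-homogeneous. So I would plug $t\xi$ (with $t>0$) into the upper bound to obtain
\begin{equation*}
f^\ell_{\rm{hom}}(t\xi)\leqslant \hat f(t\xi)+C\ell = t^2\hat f(\xi)+C\ell.
\end{equation*}

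Using the assumed $2$-homogeneity of $f^\ell_{\rm{hom}}$, the left-hand side equals $t^2 f^\ell_{\rm{hom}}(\xi)$. Dividing by $t^2$ yields
\begin{equation*}
f^\ell_{\rm{hom}}(\xi)\leqslant \hat f(\xi)+\frac{C\ell}{t^2},
\end{equation*}
and letting $t\to+\infty$ gives $f^\ell_{\rm{hom}}(\xi)\leqslant \hat f(\xi)$. Coupled with the lower bound in \eqref{ILV1}, this forces $f^\ell_{\rm{hom}}(\xi)=\hat f(\xi)$ for every $\xi\in\mathbb{R}^n$.

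There is essentially no obstacle here: once one realizes that the additive defect $C\ell$ is killed by the scaling $t\to+\infty$ against the quadratic rate, the argument is a two-line manipulation. The only thing to keep in mind is that Lemma \ref{PL1} really does supply a \emph{uniform} (i.e.\ $\xi$-independent) constant $C$, which is what legitimizes sending $t\to+\infty$ while keeping $\xi$ fixed.
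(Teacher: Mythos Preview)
Your proposal is correct and follows essentially the same approach as the paper: both arguments substitute a scaled vector $t\xi$ (the paper writes $\lambda\xi$) into the two-sided bound \eqref{ILV1}, invoke the assumed $2$-homogeneity of $f^\ell_{\rm hom}$ together with the $2$-homogeneity of $\hat f$, divide by $t^2$, and send $t\to+\infty$ so that the additive defect $C\ell/t^2$ vanishes. The paper carries the $|\xi|^2$ term of the min along until the end, while you discard it at the outset; this makes no difference, and you are slightly more explicit about the (trivial) ``if'' direction.
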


\begin{proof}
Assume that $f_{\rm{hom}}^\ell$ is $2$-homogeneous. 
Replacing $\xi$ by $\lambda\xi$ in \eqref{ILV1}, with $\lambda\neq0$, gives
			\begin{equation*}
				\lambda^{2}\hat{f}(\xi)
				\leqslant\lambda^{2}f_{\rm{hom}}^\ell(\xi)
				\leqslant \min\left\{\lambda^{2}|\xi|^{2}, \lambda^{2}\hat{f}(\xi) + C\ell \right\},
			\end{equation*}
		which can be rewritten as 
			\begin{equation*}
				\hat{f}(\xi)
				\leqslant f_{\rm{hom}}^\ell(\xi)
				\leqslant \min\left\{|\xi|^{2}, \hat{f}(\xi) + \dfrac{C\ell}{\lambda^{2}} \right\}.
			\end{equation*}
		By letting $|\lambda|\rightarrow +\infty$, we have
			\begin{equation*}
				 f_{\rm{hom}}^\ell(\xi) =  \hat{f}(\xi),
			\end{equation*}
		where we have used the obvious bound $\hat f(\xi) \leqslant |\xi|^2$. 

\end{proof}

In the following result we assume that $\beta_{\varepsilon} = \varepsilon$ for convenience. 
	
\begin{propo}\label{fneqfhat}
	For every $\xi \in \mathbb{R}^{n}\setminus\{0\}$, and every $\ell \in (0,+\infty)$, we have
		\begin{equation*}
		 f^\ell_{\rm{hom}}(\xi) \neq \hat{f}(\xi).
		\end{equation*}
	\end{propo}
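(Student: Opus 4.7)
The plan is to establish the quantitative lower bound
\begin{equation*}
	f^\ell_{\rm hom}(\xi)\;\geqslant\;\min\bigl\{|\xi|^2,\ \hat f(\xi)+c_0\ell\bigr\}\qquad\text{for every }\xi\in\mathbb{R}^n,
\end{equation*}
where $c_0=c_0(P)>0$. This bound immediately gives the proposition: testing $w\equiv 0$ in \eqref{F} yields $\hat f(\xi)\leqslant C_P|\xi|^2$ with $C_P=1-(1/2)^n<1$, so $\hat f(\xi)<|\xi|^2$ strictly whenever $\xi\neq 0$; together with $c_0\ell>0$, this forces both alternatives of the minimum to strictly exceed $\hat f(\xi)$.

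I would prove the lower bound by adapting the cube-classification scheme from Theorem~\ref{ILVII}. Given $(u_\e)\subset L^1(\Om)$ with $u_\e\to u_\xi$ in the sense of Definition~\ref{def:conv} and $\sup_\e F_\e(u_\e)<+\infty$, partition $\Om$ into the $\e$-cubes $\{Q^{k,\e}\}$ and declare a cube \emph{good} whenever the rescaled function $v_\e(y):=(\sqrt{\beta_\e\e})^{-1}u_\e(\e y)$ satisfies both conditions in \eqref{GC}. On good cubes the De Giorgi-Carriero-Leaci elimination property applied to $\hat w_\e:=\sqrt{\beta_\e}\hat v_\e$ goes through exactly as in Steps~2.1-2.3 of Theorem~\ref{ILVII} — the elimination radius $\rho(\e)=O(\e^{1/(n-1)})$ still vanishes when $\beta_\e\sim\e$ — so the Dirichlet comparison with the harmonic extension, together with the weak convergence $\nabla w_\e\rightharpoonup\xi$ of the improved sequence, gives a contribution of order $|\xi|^2\cdot\mathcal{L}^n(\text{good cubes})$ in the liminf. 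On bad cubes the trivial bound $F_\e\geqslant\widehat F_\e$ together with the $\Gamma$-convergence of $\widehat F_\e$ to $\widehat F$ produces the matrix contribution $\hat f(\xi)\e^n$; the additional $c_0\ell\e^n$ should come from the inclusion surface energy weighted by $\beta_\e=\ell\e$.

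The hardest step, and the main departure from the proof of Theorem~\ref{ILVII}, is precisely this last extraction in bad cubes. When $\ell=+\infty$ the bad cubes have vanishing density and can be absorbed into an infinitesimal error term; in the critical regime they can form a positive fraction of all cubes, so the inclusion-boundary fracture cost must be isolated quantitatively. I expect this to proceed via a sub-classification of bad cubes — distinguishing those whose jump set is concentrated in the matrix (naturally absorbed by the $\widehat F_\e$ bound) from those whose jump lies on the inclusion boundary — together with a trace-matching estimate on $\partial Q^{k,\e}_{1/2}$: the matrix trace of a near-minimiser of the $\hat f(\xi)$ cell problem oscillates at scale $|\xi|\e$, while any inclusion extension with uniformly small gradient has a near-constant trace, and the resulting $L^2$-mismatch between the two traces forces a jump set of $\mathcal{H}^{n-1}$-measure at least $c\e^{n-1}$ on each such inclusion, supplying the missing $c_0\ell\e^n$ after weighting by $\beta_\e=\ell\e$.
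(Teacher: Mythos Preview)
Your overall architecture is right and matches the paper's: reuse the good/bad cube classification of Theorem~\ref{ILVII}, recover $|\xi|^2$ on good cubes via the elimination property and harmonic comparison, and extract something strictly positive from the bad cubes. The gap you flag, however, is not where you think it is, and the trace-matching mechanism you propose is unnecessary.

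The extraction of surface energy on bad cubes is immediate from the \emph{definition} of a bad (damaged) cube. If $\mathcal{H}^{n-1}(S_{u_\e}\cap Q^{k,\e})>\vartheta\e^{n-1}$, then since the surface weight in $F_\e$ is everywhere at least $\beta_\e$, the surface part of $F_\e(u_\e,Q^{k,\e})$ is already $\geqslant\beta_\e\vartheta\e^{n-1}$. No sub-classification by location of the jump, and no trace comparison on $\partial Q^{k,\e}_{1/2}$, is needed: the large jump may sit anywhere in the cube. This single observation replaces your entire last paragraph.

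The paper then avoids your quantitative lower bound altogether by a dichotomy on the asymptotic density of damaged cubes. Writing $N_d(\e)$ for their number, either $\limsup_\e \e^n N_d(\e)=0$, in which case $\phi_\e\to 1$ in measure and the good-cube argument alone yields $\liminf_\e F_\e(u_\e)\geqslant |\xi|^2>\hat f(\xi)$; or $\limsup_\e \e^n N_d(\e)=C>0$, in which case one keeps only the crude bound $\int\phi_\e|\nabla w_\e|^2\geqslant \widehat F_\e(w_\e)$ on the volume side, passes to the $\Gamma$-liminf of $\widehat F_\e$ to recover $\hat f(\xi)$, and \emph{adds} the surface contribution $\vartheta\,\e^n N_d(\e)\to\vartheta C>0$ from the damaged cubes, obtaining $\hat f(\xi)+\vartheta C>\hat f(\xi)$. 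Either branch gives the strict inequality; there is no need to prove the uniform bound $f^\ell_{\rm hom}(\xi)\geqslant\min\{|\xi|^2,\hat f(\xi)+c_0\ell\}$, and the paper does not claim it.

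Your route would give a sharper statement if it could be completed, but the completion is not the trace-mismatch argument: it is the interpolation between the two branches above (roughly $(1-\theta)|\xi|^2+\theta\hat f(\xi)+\vartheta\theta\ell$ over the bad-cube density $\theta$), and the delicate point there is that $\widehat F_\e$ must be $\Gamma$-lower-bounded on the \emph{a priori non-periodic} union of bad cubes, not on all of $\Omega$. That is a genuine additional difficulty which the paper sidesteps by the dichotomy.
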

	
	\begin{proof} 
	Clearly the statement reduces to proving that $\hat{f}(\xi) < f^\ell_{\rm{hom}}(\xi)$ for every $\xi \in \mathbb{R}^{n}\setminus \{0\}$.
	
		We first note that from the definition of $\hat{f}$ in \eqref{F} we have that, if $\xi \neq 0$,
			\begin{equation*}
				\hat{f}(\xi) 
				\leqslant \int_{Q_{\frac{1}{2},1}} |\xi|^{2} dx
				< \int_{Q} |\xi|^{2} dx
				= |\xi|^{2}.
			\end{equation*}
		Hence
			\begin{equation}
				\hat{f}(\xi)<|\xi|^{2} \ \ \textrm{for every } \xi \in \mathbb{R}^{n}\setminus \{0\}.
				\label{CR1}
			\end{equation}
		To prove the claim, it is enough to show that for every $\xi\neq0$
		and for every admissible sequence $u_{\varepsilon}$ which converges to 
		$u_{\xi}$ in the sense of Definition \ref{def:conv} we have
			\begin{equation}
				\hat{f}(\xi)
				< \limsup_{\varepsilon \rightarrow 0}F_{\varepsilon}(u_{\varepsilon},Q).
				\label{CR2}
			\end{equation}
		Indeed, if the statement \eqref{CR2} is proven, then we can choose $u_\e$ to be the recovery sequence 
		of $F_{\varepsilon}$ for $u_\xi$ and deduce, from the $\Gamma$-convergence of $F_\e$ to $F^\ell_{\rm{hom}}$ for 
		$0<\ell<+\infty$, that for 
			\begin{equation*}
				\hat{f}(\xi)
				< \limsup_{\varepsilon \rightarrow 0}F_{\varepsilon}(u_{\varepsilon},Q)
				= f^\ell_{\rm{hom}}(\xi).
			\end{equation*}
		We can assume $\sup_\e F_{\varepsilon}(u_{\varepsilon},Q) < +\infty$, otherwise there is nothing to prove. We can also assume that $\|u_\e\|_{L^\infty(Q)}\leqslant  \|u_\xi\|_{L^\infty(Q)}$.  

\medskip

Proceeding as in the proof of Theorem \ref{ILVII}, we fix $\vartheta>0$ independent of $\varepsilon$ and introduce a classification of the cubes of the form $Q^{k,\varepsilon} :=\varepsilon(Q + k)$, with $k \in \mathbb{Z}^{n}$, for the cubes well contained in $Q$, as follows. We call a cube $Q^{k,\varepsilon}$ undamaged if it satisfies 
			\begin{equation*}
				\mathcal{H}^{n-1}(S_{u^{\varepsilon}} \cap Q^{k,\varepsilon}) 
				\leqslant \vartheta\varepsilon^{n-1},
			\end{equation*} 
		and damaged otherwise, namely if 
			\begin{equation*}
				\mathcal{H}^{n-1}(S_{u^{\varepsilon}} \cap Q^{k,\varepsilon})  
				> \vartheta\varepsilon^{n-1}.
			\end{equation*} 
Let $\mathcal{I}^d_\e$ be the set of damaged or boundary cubes, and let $N_{d}(\varepsilon)$ denote its cardinality. 

\medskip

\noindent
Again, as in the proof of Theorem \ref{ILVII}, we can construct an improved sequence $(w^\e)\subset L^1(Q)$ such that $w_\e$ converges to $u_\xi$ weakly in $BV(Q)$, and
\begin{equation}\label{important:estimate}
F_{\varepsilon}(u_{\varepsilon}) 
		\geqslant \bigg(1 - \dfrac{\eta}{1 + \eta}\bigg) \int_{Q}\phi_\e(x)
		|\nabla w_{\varepsilon}|^{2} dx + \e N_d(\e) \vartheta \e^{n-1},
	\end{equation}
where 
\begin{equation}\label{ae}
\phi_\e(x):= 
\begin{cases}
0 & \quad \textrm{if } x\in Q^{k,\varepsilon}_{\frac12}\cap \Om, \  k\in\mathcal{I}^{d}_\varepsilon,\\
1 & \quad \textrm{otherwise in } \Omega.
\end{cases}
\end{equation}
Note that $\|\phi_\e-1\|_{L^1(Q)} \leqslant c N_d(\e)\e^n$.
\medskip

We consider two different cases.
\smallskip

\noindent
If the number of damaged cubes is small\ie
			\begin{equation*}
				\limsup_{\varepsilon \rightarrow 0}\varepsilon^{n}N_{d}(\varepsilon)=0,
			\end{equation*}
since $\phi_\e \to 1$ in measure and $\nabla w_\e \rightharpoonup u_\xi$ weakly in $L^2(Q)$, by taking the liminf in \eqref{important:estimate} we obtain directly that 
\begin{equation*}
\liminf_{\e\to 0} F_{\varepsilon}(u_{\varepsilon}) 
		\geqslant \bigg(1 - \dfrac{\eta}{1 + \eta}\bigg) |\xi|^2,
	\end{equation*}
which by \eqref{CR1} and by the arbitrariness of $\eta>0$ implies \eqref{CR2}.

\smallskip

\noindent
On the other hand, if the number of damaged cubes is large\ie
			\begin{equation*}
				\limsup_{\varepsilon \rightarrow 0}\varepsilon^{n}N_{d}(\varepsilon)=C>0, 
			\end{equation*}
then there exists an infinitesimal subsequence $(\varepsilon_{k})$ such that
			\begin{equation*}
				\lim_{k \rightarrow +\infty}\varepsilon_{k}^{n}N_{d}(\varepsilon_{k}) 
				=\limsup_{\varepsilon \rightarrow 0}\varepsilon^{n}N_{d}(\varepsilon) = C.
			\end{equation*}
From \eqref{important:estimate} we deduce that 
\begin{equation*}
F_{\varepsilon_k}(u_{\varepsilon_k}) 
		\geqslant \bigg(1 - \dfrac{\eta}{1 + \eta}\bigg) \widehat{F}_{\e_k}(w_{\varepsilon_k}) + \e_k N_d(\e_k) \vartheta \e_k^{n-1},
\end{equation*}
which, by the $\Gamma$-convergence of $\widehat{F}_\e$ to $\widehat{F}$, implies that			
\begin{align*}
\limsup_{\e\to 0} F_{\varepsilon}(u_{\varepsilon}) & \geqslant \limsup_{k\to \infty} F_{\varepsilon_k}(u_{\varepsilon_k})   \geqslant \liminf_{k\to \infty} F_{\varepsilon_k}(u_{\varepsilon_k})  \\
		&\geqslant \bigg(1 - \dfrac{\eta}{1 + \eta}\bigg) \liminf_{k\to \infty} \widehat{F}_{\e_k}(w_{\varepsilon_k}) +  \vartheta C 
		\geqslant  \bigg(1 - \dfrac{\eta}{1 + \eta}\bigg) \, \hat{f}(\xi) +  \vartheta C
\end{align*}			
and hence the claim  \eqref{CR2}, by the arbitrariness of $\eta>0$. This concludes the proof.
	\end{proof}

A direct consequence of the previous results is that $f^\ell_{\rm{hom}}$ is not $2$-homogeneous.

\begin{coro}[$f^\ell_{\rm{hom}}$ is not $2$-homogeneous]\label{coro:nq}
		Let $\ell \in (0,+\infty)$; then the function $f^\ell_{\rm{hom}}$ is not $2$-homogeneous.
	\end{coro}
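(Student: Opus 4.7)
The plan is to derive Corollary \ref{coro:nq} directly by combining the two preceding results, Lemma \ref{l_quadr:hat} and Proposition \ref{fneqfhat}, which between them have already done essentially all the work. Concretely, I would argue by contradiction: suppose that, for some fixed $\ell\in(0,+\infty)$, the function $f^\ell_{\rm{hom}}$ were $2$-homogeneous.

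By Lemma \ref{l_quadr:hat}, a $2$-homogeneous $f^\ell_{\rm{hom}}$ must coincide with $\hat f$ pointwise on $\R^n$. Indeed, the lemma's proof uses the sandwich bound \eqref{ILV1} applied to $\lambda \xi$, divides through by $\lambda^2$, and sends $|\lambda|\to+\infty$ to squeeze $f^\ell_{\rm{hom}}$ onto $\hat f$. So from $2$-homogeneity of $f^\ell_{\rm{hom}}$ I would obtain the identity $f^\ell_{\rm{hom}}(\xi) = \hat f(\xi)$ for every $\xi\in\R^n$.

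However, Proposition \ref{fneqfhat} asserts that for every $\xi\in\R^n\setminus\{0\}$ and every $\ell\in(0,+\infty)$, one has $f^\ell_{\rm{hom}}(\xi)\neq \hat f(\xi)$ (in fact the strict inequality $\hat f(\xi)<f^\ell_{\rm{hom}}(\xi)$ is established there via the cube-classification argument and the additional damage-term $\vartheta C$ appearing in the energy lower bound). Picking any $\xi\neq 0$ yields an immediate contradiction, and the corollary follows.

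Since the two inputs are already proved in the excerpt, there is no genuine obstacle: the corollary is a one-line logical consequence. The only mild subtlety worth flagging is that Proposition \ref{fneqfhat} is stated under the normalization $\beta_\e=\e$, whereas the corollary concerns general $\ell\in(0,+\infty)$; but the argument there depends only on $\liminf_{\e\to 0}\beta_\e/\e>0$, so it applies verbatim to any $\ell\in(0,+\infty)$, and I would briefly note this before invoking it.
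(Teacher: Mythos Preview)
Your argument is correct and is exactly the paper's own proof: the corollary is stated as ``a straightforward consequence of Lemma \ref{l_quadr:hat} and Proposition \ref{fneqfhat}.'' Your additional remark about the normalization $\beta_\e=\e$ in Proposition \ref{fneqfhat} is a fair point, though the paper does not address it explicitly.
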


	\begin{proof}
	The conclusion is a straightforward consequence of Lemma \ref{l_quadr:hat} and Proposition \ref{fneqfhat}.
	\end{proof}

We can also say something about the behaviour of $f^\ell_{\rm{hom}}$ close to zero.

\begin{coro}[$f^\ell_{\rm{hom}}=|\xi|^2$ close to zero]
		Let $\ell\in (0,\infty)$; then there exists a constant $\gamma_0>0$ such that $f^\ell_{\rm{hom}}(\xi) = |\xi|^2$ for every $|\xi|\leqslant \gamma_0$.
	\end{coro}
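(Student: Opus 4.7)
By Lemma \ref{PL1} we already have $f^\ell_{\rm hom}(\xi) \leq |\xi|^2$, so the claim reduces to the matching lower bound $f^\ell_{\rm hom}(\xi) \geq |\xi|^2$ for $|\xi|$ small. Via the integral representation (Theorem \ref{t:int-rep}) and the $\Gamma$-convergence statement, this amounts to proving
\begin{equation*}
\liminf_{\e \to 0} F_\e(u_\e) \geq \mathcal{L}^n(Q)|\xi|^2
\end{equation*}
for every sequence $(u_\e)\subset L^1(\Omega)$ with $u_\e \to u_\xi$ in the sense of Definition \ref{def:conv} and $\sup_\e F_\e(u_\e) < \infty$, whenever $|\xi|\leq \gamma_0$ for some threshold $\gamma_0>0$ to be chosen.

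The strategy is the one used in Theorem \ref{ILVII} and Proposition \ref{fneqfhat}: classify the $\e$-cubes $Q^{k,\e}$ into \emph{good} and \emph{damaged} ones according to a threshold $\vartheta>0$ on the (rescaled) energy and the jump-set measure, and build an improved sequence $w_\e$ by combining the Elimination Property plus harmonic extension inside the inclusions of good cubes with the extension of \cite{CS1} on damaged and boundary cubes. This yields the by-now-standard inequality
\begin{equation*}
F_\e(u_\e) \geq \bigl(1 - \tfrac{\eta}{1+\eta}\bigr)\int_Q \phi_\e |\nabla w_\e|^2\dx + \beta_\e \vartheta \e^{n-1} N_d(\e),
\end{equation*}
with $\phi_\e := \chi_{Q\setminus A_\e}$, $A_\e$ the union of damaged and boundary cubes and $N_d(\e)$ their cardinality. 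The extension estimate of \cite{CS1} and Ambrosio's compactness give $w_\e \to u_\xi$ weakly in $BV(Q)$ and, crucially, $\nabla w_\e \rightharpoonup \xi$ weakly in $L^2(Q)$. Passing to a subsequence, one may assume $|A_\e|\to a\in[0,|Q|]$ and $\beta_\e \vartheta \e^{n-1} N_d(\e)\to \ell \vartheta a$.

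If $a=0$ the argument of Theorem \ref{ILVII} (Step~5) applies verbatim: $\phi_\e \to 1$ strongly in $L^p$ for every $p<\infty$, and the convexity inequality $|\nabla w_\e|^2 \geq 2\xi\cdot \nabla w_\e - |\xi|^2$ combined with the weak $L^2$ convergence of $\nabla w_\e$ to $\xi$ yields $\liminf_\e\int_Q \phi_\e |\nabla w_\e|^2\dx \geq |Q||\xi|^2$, and the desired bound follows by sending $\eta\to 0$. If $a>0$, the damaged-cube term $\ell \vartheta a$ must be combined with a convexity-based lower bound on the good-cube integral: the cross term $\int \chi_{A_\e}\xi\cdot\nabla w_\e\dx$ is controlled via Cauchy–Schwarz together with the \cite{CS1} bound $\|\nabla w_\e\|_{L^2(A_\e)}^2 \leq C_{\rm ext} F_\e(u_\e)$ and the a priori estimate $F_\e(u_\e)\leq |Q||\xi|^2(1+o(1))$ available on recovery-type sequences. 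The resulting inequality takes the form
\begin{equation*}
\liminf_\e F_\e(u_\e) \geq |Q||\xi|^2 + a(|\xi|^2 + \ell\vartheta) - 2c_0|\xi|^2\sqrt{a}, \qquad c_0=\sqrt{C_{\rm ext}|Q|},
\end{equation*}
whose minimum in $\sqrt{a}$ equals $|Q||\xi|^2 - c_0^2|\xi|^4/(|\xi|^2+\ell \vartheta)$; choosing $\gamma_0=\gamma_0(\ell,\vartheta,P,\Omega)$ small enough so that, for $|\xi|\leq \gamma_0$, the $O(|\xi|^4)$ correction is absorbed by the damaged-cube contribution for every admissible $a\geq 0$ delivers the desired bound.

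The main obstacle is precisely the regime $0 < a \ll 1$ in the critical scaling: the cross term $\int \chi_{A_\e}\xi\cdot\nabla w_\e\dx$ is a genuinely non-compact product of a weakly-$\ast$ $L^\infty$ and a weakly $L^2$ sequence, and cannot be passed to the limit directly. Its control rests on the interplay between Cauchy–Schwarz, the a priori energy bound on recovery sequences, the \cite{CS1} extension estimate, and the freedom to tune the cube-classification threshold $\vartheta$—a flexibility absent from the strict inequality analysis in Proposition \ref{fneqfhat} and automatic in the supercritical setting of Theorem \ref{ILVII}, where $a=0$ always holds.
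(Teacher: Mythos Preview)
Your setup and the case $a=0$ match the paper's proof exactly. The genuine divergence is in the case $a>0$, and here your approach does not close.

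Your quantitative inequality
\[
\liminf_{\e\to 0} F_\e(u_\e) \;\geq\; |Q|\,|\xi|^2 + a(|\xi|^2 + \ell\vartheta) - 2c_0|\xi|^2\sqrt{a}
\]
is correct, but the minimum of the right-hand side over $a\geq 0$ is $|Q|\,|\xi|^2 - c_0^2|\xi|^4/(|\xi|^2+\ell\vartheta)$, which is \emph{strictly less} than $|Q|\,|\xi|^2$ for every $\xi\neq 0$. There is no further term available to absorb this $O(|\xi|^4)$ deficit: the damaged-cube contribution $\ell\vartheta a$ is already in the expression you are minimising. So the sentence ``choosing $\gamma_0$ small enough so that the $O(|\xi|^4)$ correction is absorbed by the damaged-cube contribution'' is not justified --- the minimisation has already used up that contribution. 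Your final paragraph correctly diagnoses the obstruction but does not remove it.

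The paper handles the case $a>0$ differently and much more simply: rather than estimating the bulk term from below by something close to $|\xi|^2$ (which forces you into the cross-term difficulty), it uses only the crude lower bound coming from the $\Gamma$-liminf inequality for $\widehat F_\e$, namely $\liminf_\e \int_Q \phi_\e|\nabla w_\e|^2\,dx \geq \hat f(\xi)$. Combined with the upper bound $f^\ell_{\rm hom}(\xi)\leq |\xi|^2$, this yields
\[
|\xi|^2 \;\geq\; f^\ell_{\rm hom}(\xi) \;\geq\; \hat f(\xi) + C\vartheta,
\]
which for $|\xi|$ small is declared a contradiction (since $\hat f(\xi)\geq 0$ and $C\vartheta>0$), forcing $a=0$. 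The point is that one should \emph{rule out} the regime $a>0$ rather than try to extract the sharp bound $|\xi|^2$ from it. Once $a=0$ is established, your Step for that case (which coincides with Step~5 of Theorem~\ref{ILVII}) gives the result directly.
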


	\begin{proof}
	Let $\xi\in \R$, let $u_{\varepsilon}$ be a recovery sequence for $F_\e$, converging to $u_{\xi}$ in the sense of Definition \ref{def:conv}, and let $w_\e$ be the improved sequence satisfying
\begin{equation*}
F_{\varepsilon}(u_{\varepsilon}) 
		\geqslant \bigg(1 - \dfrac{\eta}{1 + \eta}\bigg) \int_{Q}\phi_\e(x)
		|\nabla w_{\varepsilon}|^{2} dx + \e N_d(\e) \vartheta \e^{n-1},
\end{equation*}
with $\phi_\e$ as in \eqref{ae}. Clearly 
\begin{equation}\label{important:estimate5}
|\xi|^2 \geqslant f_{\rm{hom}}^\ell(\xi) 
		\geqslant \bigg(1 - \dfrac{\eta}{1 + \eta}\bigg) \liminf_{\e\to 0}\int_{Q}\phi_\e(x)
		|\nabla w_{\varepsilon}|^{2} dx + \liminf_{\e\to 0}\e N_d(\e) \vartheta \e^{n-1}.
\end{equation}
Note that, if $\xi$ is small, then the number of damaged cubes for $w_\e$ has to be small. If not, then from \eqref{important:estimate5} and by the arbitrariness of $\eta>0$ we would deduce that 
\begin{equation*}
|\xi|^2 \geqslant f_{\rm{hom}}^\ell(\xi) 
		\geqslant \hat{f}(\xi) + C\vartheta > |\xi|^2,
\end{equation*}
where the last inequality follows by the smallness of $\xi$, since $C>0$, and this would lead to a contradiction. Hence there exists $\gamma_0>0$ such that whenever $|\xi|<\gamma_0$, the improved sequence $w_\e$ has 
only a negligible number of damaged cubes, in which case, from \eqref{important:estimate5}, we would have that 
\begin{equation*}
|\xi|^2 \geqslant f_{\rm{hom}}^\ell(\xi) 
		\geqslant \bigg(1 - \dfrac{\eta}{1 + \eta}\bigg) |\xi|^2,
\end{equation*}
and so the claim.
\end{proof}

\begin{rem}[Asymptotic behaviour of $F_{\rm{hom}}^\ell$ at infinity] From Lemma \ref{PL1} we know that for every $\xi\in \R^n$ and every $\ell\in (0,+\infty)$
\begin{equation}\label{e:repeat}
			\hat{f}(\xi) 
			\leqslant f^\ell_{\rm{hom}}(\xi) 
			\leqslant \min \{|\xi|^{2}, \hat{f}(\xi) 
			+ C \ell\},
		\end{equation}  
	where $C>0$ is a constant depending only on $P$ and on $\Omega$.
Note that for $|\xi|$ sufficiently large \eqref{e:repeat} simplifies to 
\begin{equation*}
			\hat{f}(\xi) 
			\leqslant f^\ell_{\rm{hom}}(\xi) 
			\leqslant \hat{f}(\xi) 
			+ C \ell,
		\end{equation*}
which implies that, for $|\xi|$ large,  $|f^\ell_{\rm{hom}}(\xi) - \hat{f}(\xi)|\leqslant C$, uniformly in $\xi$.
In particular, if we divide by $\hat{f}(\xi)$ the previous estimate and let $|\xi|\to +\infty$ we deduce the limit behaviour
$$
\lim_{|\xi|\to +\infty} \frac{f^\ell_{\rm{hom}}(\xi)}{\hat{f}(\xi)} = 1.
$$

\end{rem}

\medskip

%________________________________________________________

\section*{Acknowledgments}
The authors wish to thank Gianni Dal Maso and Massimiliano Morini for stimulating discussions. L. Scardia acknowledges support by the EPSRC under the Grant EP/N035631/1 ``Dislocation patterns 
beyond optimality''.
%		
%%%%%%%%%%%%%%%%%%%%%%%
%%%%%%%%%%%%%%%%%%%%%%%

%%%%%%%%%%%%%%%

\end{document}